\DeclareSymbolFont{rsfs}{U}{rsfs}{m}{n}
\DeclareSymbolFontAlphabet{\mathscrsfs}{rsfs}
\definecolor{CarmineRed}{rgb}{0.585938, 0, 0.09375}
\definecolor{UltraMarine}{rgb}{0.0703125, 0.0390625, 0.558594}
\def\subsubsection{\@startsection{subsubsection}{3}%
\z@{.5\linespacing\@plus.7\linespacing}{-.5em}%
{\normalfont\itshape}}
\newcommand{\D}{{\mathop{}\!\mathrm{d}}} 
\newcommand{\R}{\mathbb{R}}
\newcommand{\N}{\mathbb{N}}
\newcommand{\PP}{\mathbb{P}}
\newcommand{\E}{\mathbb{E}}
\newcommand{\A}{\mathscr{A}}
\newcommand{\K}{{\operatorname{K}}}
\newcommand{\bS}{\mathscr{S}}
\newcommand{\X}{\mathscr{X}}
\newcommand{\Y}{\mathscr{Y}}
\newcommand{\cD}{\mathscrsfs{D}}
\newcommand{\KL}{D_{\textup{KL}}}
\DeclareMathOperator*{\argmax}{arg\,max}
\DeclareMathOperator*{\argmin}{arg\,min}
\DeclareRobustCommand{\leftmapsto}{\text{\reflectbox{$\mapsto$}}}
\numberwithin{equation}{section}  
\newtheorem{definition}{Definition}[section]
\newtheorem{remark}[definition]{Remark}
\newtheorem{theorem}[definition]{Theorem}
\newtheorem{proposition}[definition]{Proposition}
\newtheorem{corollary}[definition]{Corollary}
\newtheorem{lemma}[definition]{Lemma}
\newtheorem{setting}[definition]{Setting}
\newtheorem{asu}[definition]{Assumption}
\title[Optimal Rates of Convergence for Entropy Regularization in MDPs]{
    Optimal Rates of Convergence for
Entropy Regularization in Discounted Markov Decision Processes
} 
\author{Johannes M\"{u}ller\textsuperscript{1,$\heartsuit$,\orcidlink{0000-0001-8729-0466}}}
\email{johannes.christoph.mueller@tu-berlin.de}
\author{Semih Cayci\textsuperscript{2,\orcidlink{0000-0001-7928-0794}}}
\email{cayci@mathc.rwth-aachen.de}
\address{$^2$ Institut für Mathematik, Technische Universität Berlin, Berlin, 10623, Germany}
\address{$^2$ Department of Mathematics, RWTH Aachen University, Aachen, 52062, Germany}
\address{$^\heartsuit\!$ Corresponding author}
\begin{document}

\begin{abstract}
We study the error introduced by entropy regularization in infinite-horizon discrete discounted Markov decision processes. 
We show that this error decreases exponentially in the inverse regularization strength, both in a weighted KL-divergence and in value with a problem-specific exponent. 
This is in contrast to previously known estimates, of the order $O(\tau)$, where $\tau$ is the regularization strength.
We provide a lower bound that matches our upper bound up to a polynomial term, thereby characterizing the exponential convergence rate for entropy regularization. 
Our proof relies on the observation that the solutions of entropy-regularized Markov decision processes solve a gradient flow of the unregularized reward with respect to a Riemannian metric common in natural policy gradient methods. 
This correspondence allows us to identify the limit of this gradient flow as the generalized maximum entropy optimal policy, thereby characterizing the implicit bias of this gradient flow, which corresponds to a time-continuous version of the natural policy gradient method. 
We use our improved error estimates to show that for entropy-regularized natural policy gradient methods, the overall error decays exponentially in the square root of the number of iterations, improving over existing sublinear guarantees. 
Finally, we extend our analysis to settings beyond the entropy. 
In particular, we characterize the implicit bias regarding general convex potentials and their resulting generalized natural policy gradients. 
\\ \textbf{Keywords: }{
Markov decision process, entropy regularization, natural policy gradient, implicit bias
}
\\ \textbf{MSC codes: }{
37N40, 65K05, 90C05, 90C40, 90C53
}
\end{abstract}

\maketitle

\section{Introduction}
Entropy regularization plays an important role in reinforcement learning and is usually employed to encourage exploration, thereby improving sample complexity and improving convergence of policy optimization techniques~\cite{williams1991function,sutton2018reinforcement, szepesvari2022algorithms, peters2010relative,mnih2016asynchronous}, where we refer to~\cite{neu2017unified} for an overview of different approaches.
One benefit of entropy regularization is that it leads to a tamer loss landscape~\cite{ahmed2019understanding}, and indeed it corresponds to a strictly convex regularizer in the space of state-action distribution, where the reward optimization problem is equivalent to a linear program, see~\cite{neu2017unified}. 
Employing this hidden strong convexity, vanilla and natural policy gradient methods with entropy regularization have been shown to converge exponentially fast for discrete and continuous problems, for gradient ascent and gradient flows, and for tabular methods as well as under function approximation~\cite{mei2020global, cen2021fast, cayci2021linear, muller2024geometry, kerimkulov2023fisher}. 
However, the improved convergence properties of entropy-regularization in policy optimization come at the expense of an error introduced by the regularization, for which a concise characterization remains elusive. 
A general theory of regularized Markov decision processes was established in~\cite{geist2019theory}, where for a bounded regularizer and regularization strength $\tau$, an $O(\tau)$ estimate on the regularization error is provided. 
This guarantee covers general regularizers but neither uses the structure of Markov decision processes nor the entropic regularizer, which is common in reinforcement learning. 
Combining this estimate with the $O(e^{-\tau\eta k})$ convergence guarantees of entropy-regularized natural policy gradient with tabular softmax policies this yields $O(\frac{\log k}{\eta k})$ convergence of the overall error, where $k$ denotes the number of iterations and $\tau = \frac{\log k}{\eta k}$, see~\cite{cen2021fast, sethi2024entropy}. 
We exploit the structure of the reward and entropy function and provide an explicit analysis of the entropy regularization error. 

\subsection{Contributions}
The main contribution of this article is to identify the optimal exponential convergence rates of the entropy regularization error in infinite-horizon discrete discounted Markov decision processes. 
More precisely, we summarize our contributions as follows: 
\begin{itemize}
    \item \emph{Pythagoras in Kakade divergence.} 
    We provide a Pythagorean theorem for $s$-rectangular policy classes and the Kakade divergence, which we consider as a regularizer and which is not a Bregman divergence, see \Cref{prop:pythagoras}. 
    \item \emph{Expression of Kakade gradient flows.} We study a Riemannian metric introduced by Kakade in the context of natural policy gradients and provide an explicit expression of the gradient flows of the reward with respect to this metric via the advantage function, see \Cref{thm:PGtheorem}.  
    \item \emph{Central path property.} 
    We show that the optimal policies of the entropy-regularized reward solve the gradient flow of the reward with respect to the Kakade metric, see \Cref{cor:implications}. 
    \item  \emph{Implicit bias.} 
    The correspondence between the Kakade gradient flow and the solutions of the entropy-regularized problems allows us to show that the gradient flows corresponding to natural policy gradients converge towards the generalized maximum entropy optimal policy, thereby characterizing the implicit bias of these methods, see \Cref{cor:implications}
    \item \emph{Optimal rates for entropy regularization.} 
    In \Cref{thm:convergenceValue} and \Cref{thm:convergencePolicies}, we give up to polynomial terms matching exponential $O(e^{-\Delta \tau^{-1}})$ upper and lower bounds on the entropy regularization error for a problem-dependent exponent $\Delta>0$. 
    \item \emph{Lower bound for unregularized tabular NPG.} We provide an anytime analysis of unregularized tabular natural policy gradients and show that the exponential convergence rate $O(e^{-\Delta \eta k})$ is tight up to polynomial terms, see \Cref{thm:convregenceNPG}. 
    \item \emph{Overall error analysis.} 
    We combine our estimate on the entropy regularization error with existing guarantees for regularized natural policy gradient methods and show that the error decreases exponentially in the square root of the number of iterations, see \Cref{thm:overallError}. 
\end{itemize}

\subsection{Related works}
A general theory of regularized Markov decision processes was established in~\cite{geist2019theory}. 
Here, $O(\tau)$ convergence was established for bounded regularizers, which was subsequently used in the overall error analysis of entropy-regularized natural policy gradients~\cite{dai2018sbeed,lee2018sparse}. 
Whereas the $O(\tau)$ estimate on the regularization error holds for general regularizers, it neither employs the specific structure of Markov decision processes nor the entropy function. 
In contrast, we show that the optimal entropy-regularized policies solve a gradient flow and use this to provide a $\tilde{O}(e^{-\Delta \tau^{-1}})$ estimate on the entropy regularization error, where $\tilde{O}$ hides a polynomial term in $\tau^{-1}$. 

At the core of our argument lies the observation that the optimal entropy-regularized policies solve the gradient flow of the unregularized reward with respect to the Kakade metric, which can be seen as the continuous time limit of unregularized natural policy gradient methods for tabular softmax policies.
This correspondence uses the isometry between the Kakade metric and the metric induced by the conditional entropy on the state-action distributions established in~\cite{muller2024geometry}. 
This allows us to use tools from the theory of Hessian gradient flows in convex optimization~\cite{alvarez2004hessian, mueller2023thesis}. 
A similar approach has been taken recently for the study of Fisher-Rao gradient flows of linear programs and state-action natural policy gradients~\cite{muller2024fisher}. 

Natural policy gradients have been shown to converge at a $O(\frac1k)$ rate~\cite{agarwal2021theory} which was used in~\cite{khodadadian2022linear} to show  assymptotic $O(e^{-c k})$ convergence for all $c<\Delta$. 
Our continuous time analysis uses a similar line of reasoning, but we provide an anytime analysis, a lower bound matching up to a polynomial term, and show convergence of the policies towards the generalized maximum entropy optimal policy. 
Further, we generalize our anytime analysis and optimal exponential rates to time natural policy gradient methods, extending the existing asymptotic upper and lower bounds given in~\cite{khodadadian2022linear, liu2024elementary}.  

A continuous-time analysis of gradient flows with respect to the Kakade metric has been conducted for Markov decision processes with discrete and continuous state and action spaces in~\cite{muller2024geometry, kerimkulov2023fisher}. 
Under the presence of entropy regularization with strength $\tau>0$, exponential $O(e^{-\tau t})$ convergence was established in~\cite{muller2024geometry, kerimkulov2023fisher}, which was generalized to sublinear convergence for certain decaying regularization strengths in~\cite{sethi2024entropy}. 
For the case of unregularized reward, exponential convergence was established in~\cite{muller2024geometry} without control over the exponent or the coefficient. 
For gradient flows with respect to the Fisher metric in state-action space, exponential convergence $O(e^{-\delta t})$ with a problem-specific exponent was established in~\cite{muller2024fisher}, however, a lower bound is missing there, and the exponent $\delta\le\Delta$ is dominated by the one for Kakade gradient flows. 
For Riemannian metrics corresponding to $\beta$-divergences, $O(t^{\beta^{-1}})$ convergence has been established for $\beta\in(-1, 0)$~\cite{muller2024geometry}. 
    
The algorithmic bias of policy optimization techniques has been studied before. Here, a $O(\log k)$ bound on the distance -- measured in the Kakade divergence -- of the policies to the maximum entropy optimal policy was established for an unregularized natural actor-critic algorithm~\cite{hu2021actor}. 
Working in continuous time allows us to show exponential convergence towards the generalized maximum entropy optimal policy. 
A similar algorithmic bias showing convergence to the maximum entropy optimal policy was established for natural policy gradient methods that decrease regularization and increase step sizes during optimization~\cite{li2023homotopic}. 
In contrast to our implicit bias result, this approach considers an asymptotically vanishing but explicit regularization. 

\subsection{Notation}
For a set $\X$ we denote the free vector space over $\X$ by $\R^\X \coloneqq \{ f\colon\X\to\R \}$ and similarly we define $\R^\X_{>0}$ and $\R^\X_{\ge0}$ as all positive and non-negative real functions on $\X$. 
For a finite set $\X$ we denote the support of $\mu\in\R^\X$ by $\operatorname{supp}(\mu) \coloneqq \{ x\in\X : \mu(x) \ne0 \}$. 
For a convex differentiable function $\phi\colon\Omega\to\R$ defined on a convex subset $\Omega\subseteq\R^\X$ we denote the corresponding \emph{Bregman divergence} by $D_\phi(\mu, \nu) \coloneqq \phi(\mu) - \phi(\nu) - \nabla\phi(\nu)^\top(\mu-\nu)$. 
We denote the \emph{Shannon entropy} of a vector $\mu\in\R_{>0}^\X$ by $H(\mu)\coloneqq - \sum_{x\in\X} \mu(x)\log \mu(x)$. The Bregman divergence induced by the negative Shannon entropy $-H$ is given by \emph{Kullback-Leibler} or \emph{KL-divergence} that we denote by $\KL(\mu, \nu)\coloneqq \sum_{x\in\X} \mu(x) \log \frac{\mu(x)}{\nu(x)}$. 
The \emph{probability simplex} $\Delta_\X\coloneqq\{ \mu\in\R_{\ge0}^\X : \sum_{x\in\X} \mu(x) =1 \}$ over a finite set $\X$ denotes the set of all probability vectors. 
Given a second finite set $\Y$, we can identify the Cartesian product $\Delta_\X^\Y = \Delta_\X\times\cdots \times\Delta_\X$ with the set of stochastic matrices or the set of Markov kernels. 
We refer to $\Delta_\X^\Y$ as the \emph{conditional probability polytope} and for $P\in\Delta_\X^\Y$ we write $P(x|y)$ for the entries of the Markov kernel. 
For $\mu\in\Delta_\Y$ and a Markov kernel $P\in\Delta_\X^\Y$ we denote the corresponding joint probability measure over $\X\times\Y$ by $\mu\ast P\in\Delta_{\X\times\Y}$ given by $(\mu\ast P)(x,y) \coloneqq \mu(x)P(y|x)$. 
Finally, for two vectors $\mu, \nu\in\R^\X$ we denote the \emph{Hadamard product} by $\mu\odot\nu\in\R^\X$ with entries $(\mu\odot\nu)(x) \coloneqq \mu(x)\nu(x)$.

\section{Preliminaries}\label{sec:preliminaries}

In this section, we provide background material from the theory of Markov decision processes. 
We put an emphasis on regularization and see that entropy-regularized Markov decision processes are equivalent to a regularized linear programming formulation of the Markov decision process, where the regularizer is given by a conditional entropy term. 
We conclude with a general discussion of regularized linear programs and revisit the central path property. 
This states that the solutions of regularized linear programs with regularization strength $t^{-1}$ solve the gradient flow of the linear program with respect to the Riemannian metric induced by the convex regularizer. 
Our explicit analysis of the entropy regularization error is built on this dynamic interpretation of the solutions of the optimizers of the regularized problems. 

\subsection{Markov decision processes and entropy regularization} 

We consider a finite set $\bS$ of states of some system and a finite set $\A$ of actions that can be used to control the state $s\in\bS$. 
The transition dynamics are described by a fixed Markov kernel $P\in\Delta_{\bS}^{\bS\times\A}$, where $P(s'|s,a)$ describes the probability of transitioning from $s$ to $s'$ under the action $a$. 
It is the goal in Markov decision processes and reinforcement learning to design a \emph{policy} $\pi\in\Delta_\A^\bS$, where the entries $\pi(a|s)$ of the stochastic matrix describe the probability of selecting an action $a$ when in state $s$. 
A common optimality criterion is the \emph{discounted infinite horizon reward} given by 
\begin{align}
    R(\pi) \coloneqq (1-\gamma) \E_{\PP^{\pi, \mu}}\left[ \sum_{t\in\N} \gamma^t r(S_t, A_t)  \right], 
\end{align}
where $\mu\in\Delta_\bS$ is an initial distribution over the states and $\PP^{\pi, \mu}$ denotes the law of the Markov process on $\bS\times\A$ induced by the iteration 
\begin{align}
    S_0\sim\mu, A_t \sim\pi(\cdot|S_t), S_{t+1}\sim P(\cdot|S_t, A_t),
\end{align}
$r\in\R^{\bS\times\A}$ is the \emph{instantaneous reward} vector, and $\gamma\in[0,1)$ is the \emph{discount factor}. 
To encourage exploration, it is common to regularize the reward with an entropy term, which yields the \emph{entropy-regularized} or \emph{KL-regularized} reward 
\begin{align}
    R_\tau^\mu(\pi) \coloneqq (1-\gamma) \E_{\PP^{\pi, \mu}}\left[ \sum_{t\in\N} \gamma^t (r(S_t, A_t) - \tau \KL(\pi(\cdot|S_t), \pi_0(\cdot|S_t)) \right] 
\end{align}
for some \emph{reference policy} $\pi_0\in\Delta_\A^\bS$ and \emph{regularization strength} $\tau\ge0$. 
The {entropy or KL-regularized reward optimization problem} is given by 
\begin{align}\label{eq:regularizedProblem}
    \max R_\tau(\pi) \quad \text{subject to } \pi\in\Delta_\A^\bS. 
\end{align} 
The regularization can be interpreted as a convex regularization and consequently was used to show exponential convergence~\cite{mei2020global, cen2021fast, cayci2021linear, lan2022policy}, however, it introduces an error and leads to a new optimal policy $\pi^\star_\tau$, which might not be optimal with respect to the unregularized reward $R$. 
It is our goal to understand how well the entropy-regularized reward optimization problem~\eqref{eq:regularizedProblem} approximates the unregularized reward optimization problem. 
For this we provide an explicit analysis of the \emph{entropy regularization error} $R^\star - R(\pi^\star_\tau)$ and $\min_{\pi^\star\in\Pi^\star}D(\pi^\star, \pi_\tau^\star)$, where $R^\star = \max_{\pi} R(\pi)$ denotes the optimal reward, $\Pi^\star$ the set of optimal policies, and $D(\cdot, \cdot)$ is some notion of distance. 

A central role in theoretical and algorithmic approaches to Markov decision processes and reinforcement learning plays the \emph{value function} $V^\pi\in\R^\bS$ given by 
\begin{align}
    V^\pi_{\tau}(s) \coloneqq R^{\delta_s}_{\tau}(\pi) \quad \text{for all } s\in\bS,
\end{align}
which stores the reward obtained when starting in a deterministic state $s\in\bS$. 
The unregularized and regularized \emph{state-action} or $Q$-\emph{value functions} are given by
\begin{align}
    Q_{\tau}^\pi(s,a) \coloneqq (1-\gamma) r(s,a) + \gamma \sum_{s'\in\bS} V^{\pi}_{\tau}(s') P(s'|s,a) \quad \text{for all } s\in\bS, a\in\A. 
\end{align}
The \emph{advantage function} of a policy $\pi$ given by 
\begin{align}
    A_{\tau}^\pi(s,a) \coloneqq Q^\pi_{\tau}(s,a) - V^\pi_{\tau}(s),
\end{align}
in words, $A_{\tau}^\pi(s,a)$ describes how much better is it to select action $a$ and follow the policy $\pi$ afterwards compared to following $\pi$. 
Finally, we define the \emph{optimal reward} and \emph{optimal value functions} via $R^\star\coloneqq\max_{\pi\in\Delta_\A^\bS}R(\pi)$ and 
\begin{align}
    V^{\star}_{\tau}(s) \coloneqq \max_{\pi\in\Delta_\A^\bS} V_{\tau}^{\pi}(s) \quad \text{and } Q_{\tau}^{\star}(s,a) \coloneqq \max_{\pi\in\Delta_\A^\bS} Q_{\tau}^{\pi}(s,a) \quad \text{for } s\in\bS, a\in\A
\end{align}
and the \emph{optimal advantage function} via 
\begin{align}
    A_{\tau}^{\star}(s,a) \coloneqq Q_{\tau}^{\star}(s,a) - V^\star_\tau(s) \quad \text{for } s\in\bS, a\in\A. 
\end{align}
It is well known that there are optimal policies $\pi^\star_{\tau}\in\Delta_\A^\bS$ satisfying $V^\star_{\tau} = V^{\pi^\star_{\tau}}$ and $Q^\star_{\tau} = Q^{\pi^\star_{\tau}}$, which is known as the Bellman principle, see~\cite{geist2019theory}. 
In the unregularized case, the optimal advantage function $A^\star$ satisfies $A^\star(s,a)\le0$ for all $s\in\bS, a\in\A$, and an action $a\in\A$ is optimal in a state $s\in\bS$ if and only if $A^\star(s,a)=0$.
For $s\in\bS$, we denote the set of optimal actions by $A^\star_s\coloneqq\{a\in\A: A^\star(s,a)=0\}$ and the set of optimal policies is given by 
\begin{align}
    \Pi^\star 
    \coloneqq \left\{ \pi\in\Delta_\A^\bS : R(\pi) = R^\star \right\} 
    = \left\{ \pi\in\Delta_\A^\bS : \operatorname{supp}(\pi(\cdot|s)) \subseteq A^\star_s \text{ for all } s\in\bS \right\}. 
\end{align}
We denote the unregularized value and advantage functions by $V^\pi=V^\pi_0$, $Q^\pi=Q^\pi_0$, $q^\pi=q^\pi_0$, $A^\pi=A^\pi_0$, and $B^\pi=B^\pi_0$. Note that $Q^\pi = q^\pi$ and $A^\pi = B^\pi$. 

Of central importance in the theory of Markov decision processes are the \emph{state distributions}, which measure how much time the process spends at the individual states for a given policy $\pi\in\Delta_\A^\bS$. This is formalized by 
\begin{align}
    d^{\pi}(s) = d^{\pi, \mu}_\gamma(s) \coloneqq (1-\gamma) \sum_{t\in\N} \gamma^t \PP^{\pi, \mu}(S_t=s). 
\end{align}
Note that indeed $d^\pi\in\Delta_\bS$ by the geometric series. 
Sometimes, the state distributions are called state frequencies, state occupancy measures, or (state) visitation distributions. 
We work with the following notion of distance between policies. 

\begin{definition}[Kakade divergence]
    For two policies $\pi_1, \pi_2\in\Delta_\A^\bS$ we call 
        \begin{align}
            D_\K(\pi_1, \pi_2) = D_\K^\mu(\pi_1, \pi_2) \coloneqq \sum_{s\in\bS} d^{\pi_1}(s) \KL(\pi_1(\cdot|s),\pi_2(\cdot|s)) 
        \end{align}
    the \emph{Kakade divergence} between $\pi_1$ and $\pi_2$. 
    Note that $D_\K$ depends on the initial distribution $\mu\in\Delta_\bS$ and the discount factor $\gamma\in[0,1)$. 
\end{definition}

The Kakade divergence arises naturally when studying entropy regularization since 
\begin{align}
    R_\tau(\pi) = R(\pi) - \tau D_\K(\pi, \pi_0) \quad \text{for all } \pi\in\Delta_\A^\bS. 
\end{align}
Note that although $\KL$ is a Bregman divergence, the Kakade divergence is not, which hinders the direct use of mirror descent tools developed in the context of convex optimization~\cite{bubeck2015convex}. 

\begin{remark}[Kakade divergence is not Bregman]
Bregman divergences are well-studied in convex optimization~\cite{alvarez2004hessian, bubeck2015convex}, however, the Kakade divergence does not fall into this class. 
Indeed, Bregman divergences are convex in their first argument, which is not generally true for the Kakade divergence.  
To construct a specific example, where $D_\K(\cdot,\pi)$ is not convex, we consider the Markov decision process with two states and actions shown in \Cref{fig:kakadeExample}.  
\begin{figure}[h!]
    \centering
    \begin{tikzpicture}
    \node[shape=circle, draw=black, minimum size=1cm] (A) at (0,0) {$s_1$};
    \node[shape=circle,draw=black,minimum size=1cm] (B) at (3,0) {$s_2$};
    \path [->] (B) [bend left=20] edge node[below] {$a_1$} (A);
    \path [->] (A) [bend left=20] edge node[above] {$a_2$} (B);
    \path [->] (A) [loop left] edge node[left] {$a_1$} (A);
    \path [->] (B) [loop right] edge node[right] {$a_2$} (B);
\end{tikzpicture}
\caption{Transition graph of the Markov decision process.} 
\label{fig:kakadeExample}
\end{figure}
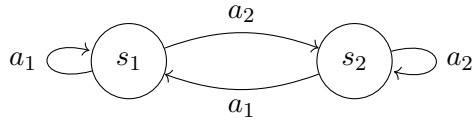
Further, we choose the reference policy $\pi$ to be the uniform policy and if we consider $\pi_p(a_1|s_i) = p$ for $i=1,2$, then we obtain 
\begin{align}
    g(p)\coloneqq D_\K(\pi_p, \pi) = (1-\gamma)\phi(p) + \gamma^2 p^2 \phi(p),
\end{align}
where $\phi(p) \coloneqq -p\log p - (1-p)\log(1-p)$. 
Taking the second derivative yields 
\begin{align}
    \partial_p^2g(p) = -(1-\gamma)\left(\frac1{1 - p} - \frac1{p^2}\right) - \gamma^2 \left(\frac1{1-p} + \frac1p\right) p - 2 \gamma^2 (\log p-\log(1 - p)).
\end{align}
Taking $p\to1$ yields $\partial_p^2g(p)\to-\infty$ showing the non-convexity of $g$ and therefore $D_\K(\cdot, \pi)$. 
\end{remark}

As we discuss later, the Kakade divergence is the pullback of the conditional KL divergence on the space of state-action distributions, which renders the entropy-regularized reward optimization problem equivalent to a linear program with a Bregman regularizer, see \Cref{prop:SAGeometryLP}. 

The following sublinear estimate on the regularization error is well known and commonly applied when approximating unregularized Markov decision processes via regularization, see~\cite{geist2019theory, cen2021fast}. 
Here, we follow the terminology common in optimization theory, where linear convergence refers to exponential convergence $O(e^{-ct})$ and sublinear to an algebraic convergence rate $O(t^{-\kappa})$~\cite{nesterov2018lectures}. 

\begin{proposition}[Sublinear estimate on the regularization error]\label{prop:generalEstimate}
    Let $\pi^\star_\tau$ denote an optimal policy of the regularized reward $R_\tau(\pi)\coloneqq R(\pi) - \tau D_\K(\pi, \pi_0)$ for some $\pi_0\in\Delta_\A^\bS$ and denote the set of optimal policies by $\Pi^\star$. 
    Then we have 
    \begin{align} 
    0\le R^\star - R(\pi^\star_\tau) \le \tau\inf_{\pi^\star\in\Pi^\star} D_\K(\pi^\star, \pi_0), 
    \end{align}
    where $\inf_{\pi\in\Pi^\star}D_\K(\pi, \pi_0)<+\infty$ for $\pi_0\in\operatorname{int}(\Delta_\A^\bS)$. 
\end{proposition}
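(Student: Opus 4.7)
The plan is to observe that this is a soft one-line comparison: the lower bound is definitional and the upper bound follows from the optimality of $\pi_\tau^\star$ for the regularized reward together with the nonnegativity of the Kakade divergence. I do not expect any genuine obstacle — the argument uses none of the structure of Markov decision processes or of the entropy beyond $D_\K \ge 0$, which is precisely why it produces only a linear-in-$\tau$ bound. The substantive work of the paper will be to sharpen this trivial estimate to an exponentially small one via the central path / gradient flow framework developed in the later sections.

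First I would note that $R^\star - R(\pi_\tau^\star) \ge 0$ is immediate from $R^\star = \max_\pi R(\pi)$. For the upper bound, I would fix an arbitrary $\pi^\star \in \Pi^\star$, invoke the optimality relation $R_\tau(\pi_\tau^\star) \ge R_\tau(\pi^\star)$, and rearrange using $R(\pi^\star) = R^\star$ to obtain
\begin{equation*}
R^\star - R(\pi_\tau^\star) \;\le\; \tau\bigl(D_\K(\pi^\star, \pi_0) - D_\K(\pi_\tau^\star, \pi_0)\bigr) \;\le\; \tau\, D_\K(\pi^\star, \pi_0),
\end{equation*}
where the final inequality discards the nonnegative term $\tau D_\K(\pi_\tau^\star,\pi_0) \ge 0$. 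Taking the infimum over $\pi^\star \in \Pi^\star$ yields the claimed bound.

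For the finiteness claim I would use that $\pi_0 \in \operatorname{int}(\Delta_\A^\bS)$ means $\pi_0(a|s) > 0$ for every $(s,a)$, so
\begin{equation*}
\KL(\pi(\cdot|s),\pi_0(\cdot|s)) \;\le\; -\log \min_{s',a'} \pi_0(a'|s') \;<\; \infty
\end{equation*}
for every $\pi \in \Delta_\A^\bS$ and every $s \in \bS$, where the bound uses $\sum_a \pi(a|s) = 1$ and the elementary estimate $\log(\pi(a|s)/\pi_0(a|s)) \le -\log \pi_0(a|s)$. Averaging this uniform bound against the probability vector $d^\pi$ gives $D_\K(\pi,\pi_0) < \infty$ for any $\pi$, and since $\Pi^\star$ is nonempty by the Bellman principle, the infimum is bounded above by this finite constant.
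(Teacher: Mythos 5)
Your argument is correct and is essentially identical to the paper's proof: both start from $R_\tau(\pi^\star)\le R_\tau(\pi^\star_\tau)$ for an arbitrary $\pi^\star\in\Pi^\star$, drop the nonnegative term $\tau D_\K(\pi^\star_\tau,\pi_0)$, rearrange, and take the infimum over $\Pi^\star$. Your explicit verification of the finiteness claim via the uniform bound $\KL(\pi(\cdot|s),\pi_0(\cdot|s))\le-\log\min_{s',a'}\pi_0(a'|s')$ is a harmless addition that the paper leaves implicit.
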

\begin{proof}
    For any $\pi^\star\in\Pi^\star$ we have 
    \begin{align*}
        R^\star-\tau D_\K(\pi^\star, \pi_0)  = R_\tau(\pi^\star)\le R_\tau(\pi^\star_\tau) \le R(\pi^\star_\tau).
    \end{align*}
    Rearranging and taking the infimum over $\Pi^\star$ yields the claim. 
\end{proof}

Note that \Cref{prop:generalEstimate} holds for general regularizers and uses neither the reward nor the regularizer. 
In the remainder, we will improve this sublinear estimate $O(\tau)$ on the suboptimality of the entropy optimal regularized policy $\pi^\star_\tau$ to an exponential estimate $\tilde O(e^{-\Delta\tau^{-1}})$, establish a lower bound matching the upper bound up a polynomial term, and provide a similar estimate on $\min_{\pi^\star\in\Pi^\star} D_\K(\pi^\star, \pi_\tau^\star)$.
This characterizes the optimal exponential convergence rate of the entropy regularization error in discounted Markov decision processes.

\subsection{State-action geometry of entropy regularization}
Similarly to the state-distributions, we define the \emph{state-action distribution} of a policy $\pi\in\Delta_\A^\bS$ via 
\begin{align}
    \nu^\pi(s,a) = \nu^{\pi, \mu}_\gamma(s,a) \coloneqq (1-\gamma) \sum_{t\in\N} \gamma^t \PP^{\pi, \mu}(S_t=s, A_t = a).
\end{align}
Note that by the geometric series and stationarity of the policy, we have  $\nu^\pi\in\Delta_{\bS\times\A}$ and it holds that $\nu^\pi(s,a) = d^\pi(s)\pi(a|s)$.  
The state and state-action distributions are also known as \emph{occupancy measures} or \emph{state frequencies}. 
An important property of state-action distributions is given by 
\begin{align}
    R(\pi) = r^\top \nu^\pi,
\end{align}
which can be seen using the dominated convergence theorem~\cite{mueller2023thesis}. 
Moreover, we have the classic characterization of the set of state-action distributions.

\begin{proposition}[State-action polytope, \cite{derman1970finite}]
    The set $\cD = \{\nu^\pi : \pi\in\Delta_\A^\bS\}$ of state-action distributions is a polytope given by 
    \begin{align}
        \cD = \Delta_{\bS\times\A} \cap \left\{ \nu\in\R^{\bS\times\A} : \ell_s(\nu)=0 \text{ for all } s\in\bS \right\},
    \end{align}
    where 
    \begin{align}
        \ell_s(\nu) \coloneqq \sum_{a\in\A}\nu(s,a) - \gamma \sum_{a'\in\A, s'\in\bS} P(s|s',a') \nu(s',a') - (1-\gamma) \mu(s). 
    \end{align} 
\end{proposition}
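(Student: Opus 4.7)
The plan is to verify the two set-theoretic inclusions separately, using the Bellman flow equation as the bridge, and then observe that the resulting description is automatically a (convex) polytope.

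For the inclusion $\cD\subseteq \Delta_{\bS\times\A}\cap\{\ell_s(\cdot)=0\}$, I would first note that $\nu^\pi\in\Delta_{\bS\times\A}$ by the geometric series argument already used for $d^\pi$. The main computation is then the Bellman flow identity. Starting from the definition
\begin{align*}
    \nu^\pi(s,a)=(1-\gamma)\sum_{t\in\N}\gamma^t\PP^{\pi,\mu}(S_t=s,A_t=a),
\end{align*}
I would sum over $a\in\A$ to obtain $d^\pi(s)=\sum_a\nu^\pi(s,a)$, split off the $t=0$ contribution $(1-\gamma)\mu(s)$, and apply the Markov property together with an index shift $t\mapsto t+1$ on the tail:
\begin{align*}
    d^\pi(s)=(1-\gamma)\mu(s)+\gamma\sum_{s',a'}P(s\mid s',a')\,\nu^\pi(s',a'),
\end{align*}
which rearranges to $\ell_s(\nu^\pi)=0$. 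Everything here is a routine swap of sums justified by absolute convergence of the geometric series.

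For the reverse inclusion, given $\nu\in\Delta_{\bS\times\A}$ with $\ell_s(\nu)=0$ for all $s$, I would set $d(s)\coloneqq\sum_a\nu(s,a)$ and define a policy $\pi$ by conditional normalization, $\pi(a\mid s)\coloneqq\nu(s,a)/d(s)$ whenever $d(s)>0$ and arbitrarily on the remaining states. The plan is to show $\nu=\nu^\pi$ through the two-step reduction (i)~$d=d^\pi$, and (ii)~$\nu(s,a)=d(s)\pi(a\mid s)=d^\pi(s)\pi(a\mid s)=\nu^\pi(s,a)$. For (i) I would check that $d$ satisfies the same fixed-point equation as $d^\pi$, namely $d=(1-\gamma)\mu+\gamma P^\pi{}^\top d$ with $P^\pi(s\mid s')=\sum_a P(s\mid s',a)\pi(a\mid s')$; this follows from $\ell_s(\nu)=0$ after substituting $\nu(s',a')=d(s')\pi(a'\mid s')$, which holds even on states with $d(s')=0$ since then $\nu(s',\cdot)=0$. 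Uniqueness of the solution of this linear system (the operator $I-\gamma P^\pi{}^\top$ is invertible because $\gamma<1$ and $P^\pi{}^\top$ is a substochastic matrix) then yields $d=d^\pi$.

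The main delicate point I expect is the treatment of states with $d(s)=0$, where the normalization defining $\pi$ is non-canonical; I would handle this by checking explicitly that on such states both $\nu$ and $\nu^\pi$ vanish, making the particular choice of $\pi(\cdot\mid s)$ irrelevant. Finally, the polytope structure is immediate: the right-hand side is the intersection of the simplex $\Delta_{\bS\times\A}$ with finitely many affine hyperplanes $\{\ell_s=0\}$, hence a polytope.
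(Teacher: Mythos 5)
Your argument is correct. Note that the paper itself gives no proof of this proposition --- it is quoted as a classical result with a citation to Derman (1970) --- so there is no in-paper argument to compare against; what you wrote is the standard occupancy-measure proof, and it is consistent with the facts the paper uses later (the identity $\nu^\pi(s,a)=d^\pi(s)\pi(a|s)$ and the recovery of a policy from $\nu$ by conditioning). Both directions are sound: the forward inclusion is the Bellman flow computation with the index shift justified by absolute convergence, and the reverse inclusion correctly reduces to the uniqueness of the solution of $d=(1-\gamma)\mu+\gamma (P^\pi)^\top d$, where invertibility of $I-\gamma (P^\pi)^\top$ follows since $P^\pi$ is in fact a stochastic matrix (so the spectral radius of $\gamma(P^\pi)^\top$ is at most $\gamma<1$ and the Neumann series converges); your handling of states with $d(s)=0$, where the choice of $\pi(\cdot|s)$ is immaterial because both $\nu(s,\cdot)$ and $\nu^\pi(s,\cdot)$ vanish, closes the only delicate point.
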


In particular, the characterization of the state-action distributions of a Markov decision process as a polytope shows that  the reward optimization problem is equivalent to the linear program 
\begin{align}\label{eq:MDP-LP}
    \max r^\top \nu \quad \text{subject to } \nu\in\cD. 
\end{align}
Further, it is well known that for a state-action distribution $\nu\in\cD$, a policy $\pi\in\Delta_\A^\bS$ with $\nu^\pi = \nu$ can be computed by conditioning, see for example~\cite{kallenberg1994survey, muller2022pomdps, laroche2023occupancy}, and hence we have  
\begin{equation}\label{eq:conditioning}
    \pi(a|s) = \nu(a|s) \coloneqq \begin{cases}
        \frac{\nu(s,a)}{\sum_{a'} \nu(s,a')} \quad & \text{if }\sum_{a'} \nu(s,a')>0 \text{ and} \\[\medskipamount] 
        \frac1{\lvert \A\rvert} \quad & \text{otherwise}. 
    \end{cases}
\end{equation}
The entropy-regularized reward optimization problem admits an interpretation as a regularized version of the linear program~\eqref{eq:MDP-LP}, where the regularizer is given by the conditional entropy. 

\begin{definition}[Conditional entropy and KL]
For $\nu\in\R_{>0}^{\bS\times\A}$ we define the \emph{conditional entropy} via 
\begin{align}
    H_{A|S}(\nu) \coloneqq \sum_{s\in\bS,a\in\A} \nu(s,a) \log \frac{\nu(s,a)}{\sum_{a'} \nu(s,a')} = H(\nu) - H(d),
\end{align}    
where $d(s)\coloneqq\sum_{a\in\A}\nu(s,a)$. 
For $\nu_1, \nu_2\in\R_{>0}^{\bS\times\A}$ we call 
    \begin{align}
        D_{A|S}(\nu_1, \nu_2) \coloneqq \sum_{s\in\bS,a\in\A} \nu_1(s,a) \log\frac{\nu_1(a|s)}{\nu_2(a|s)} = \KL(\nu_1, \nu_2) - \KL(d_1, d_2). 
    \end{align}
the \emph{conditional KL-divergence} between $\nu_1$ and $\nu_2$, where $d_i(s)= \sum_{a\in\A} \nu_i(s,a)$. 
\end{definition}

Direct computation shows that $D_{A|S}$ is the Bregman divergence induced by $H_{A|S}$, see~\cite[A.1]{neu2017unified}.

\begin{proposition}[State-action geometry of entropic regularization]\label{prop:SAGeometryLP}
It holds that 
\begin{align}
    D_\K(\pi_1, \pi_2) = D_{A|S}(\nu^{\pi_1}, \nu^{\pi_2}) \quad \text{for all } \pi_1, \pi_2\in\Delta_\A^\bS 
\end{align} 
showing that $D_\K$ is the pull back of the conditional KL-divergence $D_{A|S}$.
In particular, the entropy-regularized reward optimization problem~\eqref{eq:regularizedProblem} is equivalent to the regularized linear program 
\begin{align}
    \max r^\top \nu - \tau D_{A|S}(\nu, {\nu_0})
    \quad \text{subject to } \nu\in\cD,
\end{align}
where $\nu_0 = \nu^{{\pi_0}}$, meaning that there is a unique solution $\nu^\star_\tau\in\operatorname{int}(\cD)$ and therefore a unique solution $\pi^\star_\tau\in\operatorname{int}(\Delta_\A^\bS)$ of the regularized problem and we have $\nu^{\pi^\star_\tau} = \nu^\star_\tau$. 
\end{proposition}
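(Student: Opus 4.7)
The plan is to split the proposition into three parts: the pullback identity, the equivalence of optimization problems, and uniqueness together with interiority of the maximizer.

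First I would verify $D_\K(\pi_1,\pi_2) = D_{A|S}(\nu^{\pi_1},\nu^{\pi_2})$ by a direct unfolding of definitions. Writing $\nu^{\pi_i}(s,a) = d^{\pi_i}(s)\pi_i(a|s)$, the conditional $\nu^{\pi_i}(a|s)$ agrees with $\pi_i(a|s)$ whenever $d^{\pi_i}(s) > 0$, and states with $d^{\pi_1}(s) = 0$ contribute zero on both sides. Factoring the marginal $d^{\pi_1}(s)$ out of the inner action sum in $D_{A|S}$ then produces the state-weighted KL that defines $D_\K(\pi_1,\pi_2)$.

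With the identity in hand, I would combine it with the linear reward formula $R(\pi) = r^\top \nu^\pi$ to rewrite $R_\tau(\pi) = r^\top\nu^\pi - \tau D_{A|S}(\nu^\pi,\nu^{\pi_0})$. Surjectivity of $\pi \mapsto \nu^\pi$ onto $\cD$ (supplied by the state-action polytope proposition), together with the conditioning formula that recovers a policy from its state-action distribution, then shows the equivalence between the entropy-regularized policy problem and the regularized linear program on $\cD$.

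For the uniqueness and interiority claims, the core task is strict concavity of $H_{A|S}$ along the tangent space of $\cD$. I would compute the Hessian of $H_{A|S}$ and apply Cauchy--Schwarz to see that it is negative semidefinite on $\R^{\bS\times\A}$ with kernel $\{v : v(s,a) = c_s\nu(s,a)\}$. Substituting this ansatz into the affine constraints $\ell_s(\nu)=0$ defining $\cD$ and setting $\tilde c(s) \coloneqq c_s d(s)$ yields $\tilde c = \gamma T \tilde c$, where $T$ is the stochastic transition matrix induced by the policy $\pi = \nu(\cdot|\cdot)$; since the spectral radius of $T$ equals one and $\gamma < 1$, this forces $\tilde c = 0$, and hence $v = 0$ on the support of $\nu$. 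Interiority then follows from a standard barrier argument: assuming $\pi_0$ has full support, the gradient of $\nu \mapsto D_{A|S}(\nu,\nu_0)$ diverges along any direction pointing out of the relative interior of $\cD$ since $\nabla H_{A|S}(\nu)_{s,a} = -\log\nu(a|s)$, ruling out boundary optima. Reconstructing $\pi^\star_\tau$ from the unique interior maximizer $\nu^\star_\tau$ by conditioning gives the unique interior policy.

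The step I expect to be hardest is the strict concavity argument, since the conditional entropy has a nontrivial kernel on the ambient space and one must combine the affine structure of $\cD$ with the discount condition $\gamma < 1$ to rule out kernel directions within the tangent space. The remaining pieces are essentially bookkeeping with the definitions of $\nu^\pi$ and $D_{A|S}$.
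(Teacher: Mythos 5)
Your proposal is correct and follows the same route as the paper's (very terse) proof, which simply observes $R_\tau(\pi)=r^\top\nu^\pi-\tau D_{A|S}(\nu^\pi,\nu_0)$ and defers the computations to the literature. The added value in your write-up is that you prove rather than cite the two ingredients the paper outsources: the pullback identity by factoring the marginal $d^{\pi_1}(s)$ out of the conditional KL, and the strict convexity of $H_{A|S}$ on $T\cD$, where your Cauchy--Schwarz kernel computation ($v(s,a)=c_s\nu(s,a)$) combined with the constraints $\ell_s(\nu)=0$ and the spectral radius bound for $\gamma<1$ is exactly the argument behind the cited fact from the natural-gradient-geometry literature, and your barrier argument justifies interiority of $\nu^\star_\tau$. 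One small caveat: both the pullback identity at states where one marginal vanishes and the passage from conditionals to entries in the barrier step implicitly need positive state marginals (full-support $\mu$ or the state-exploration assumption), an assumption the paper also makes tacitly, so this does not constitute a gap relative to the paper's own treatment.
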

\begin{proof}
This is a direct consequence of $R_\tau(\pi) = r^\top \nu^{\pi} - \tau D_{A|S}(\nu, {\nu_0})$, see also~\cite{neu2017unified}. 
\end{proof}

The Pythagorean theorem can be generalized to Bregman divergences, which is well known in the field of information geometry and convex optimization~\cite{Csiszár2012,bubeck2015convex,amari2016information, ay2017information}. 
As the Kakade divergence is not a Bregman divergence, it is not included in those general results, but using the characterization of $D_\K$ as the pullback of a conditional KL-divergence $D_{A|S}$ allows us to provide a Pythagorean theorem for the Kakade divergence and $s$-rectangular policy classes. 

\begin{theorem}[Pythagoras for Kakade divergence]\label{prop:pythagoras} 
    Consider a set of policies $\Pi = \otimes_{s\in\bS} \Pi_s\subseteq\Delta_\A^\bS$ given by the cartesian product of polytopes $\Pi_s\subseteq\Delta_\A$. 
    Further, fix $\pi_0\in\Delta_\A^\bS$ and consider the Kakade
    projection 
    \begin{align}
        \hat\pi = \argmin_{\pi\in\Pi} D_\K(\pi, \pi_0)
    \end{align}
    of $\pi_0$ onto $\Pi$. 
    Then for any $\pi\in\Pi$ we have 
    \begin{align}\label{eq:pythagoras}
        D_\K(\pi, \pi_0) \ge D_\K(\pi, \hat\pi) + D_\K(\hat\pi, \pi_0). 
    \end{align}
    If further $\Pi_s = \Delta_\A\cap\mathcal L_s$ for affine spaces $\mathcal L_s\subseteq\R^\A$ for all $s\in\bS$, then we have equality in~\eqref{eq:pythagoras}. 
\end{theorem}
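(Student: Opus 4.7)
The plan is to transport the statement to the space of state-action distributions via \Cref{prop:SAGeometryLP}, where $D_\K$ becomes the Bregman divergence of the convex function $-H_{A|S}$. In that setting the statement reduces to the classical generalized Pythagorean theorem for Bregman projections onto convex, respectively affine, sets.

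The first and most delicate step is to show that the image $\tilde\Pi \coloneqq \{\nu^\pi : \pi\in\Pi\}$ is a polytope in $\R^{\bS\times\A}$, and an affine slice of $\cD$ when each $\Pi_s$ is an affine section of $\Delta_\A$. The nonlinearity of the conditioning map $\pi(a|s) = \nu(s,a)/d^\nu(s)$ is only apparent here: a linear constraint $A_s \pi(\cdot|s) \le b_s$ on the conditional lifts to $A_s \nu(s,\cdot) \le d^\nu(s) b_s$, which is linear in $\nu$ because $d^\nu$ is linear in $\nu$. At states $s$ with $d^\nu(s) = 0$ the constraint is trivial, and nonemptiness of each $\Pi_s$ lets me lift any such $\nu$ back to a policy in $\Pi$. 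This bookkeeping at unvisited states is where I expect to have to be careful.

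Once $\tilde\Pi$ is identified as a convex set, by \Cref{prop:SAGeometryLP} the point $\hat\nu \coloneqq \nu^{\hat\pi}$ is the Bregman projection of $\nu_0 \coloneqq \nu^{\pi_0}$ onto $\tilde\Pi$ with respect to $D_{A|S}$. I would then invoke the standard first-order optimality argument: for $\nu\in\tilde\Pi$ and $\nu_t \coloneqq (1-t)\hat\nu + t\nu$, differentiating $D_{A|S}(\nu_t, \nu_0)$ at $t=0^+$ and using $\sum_a \partial_t \nu_t(a|s) = 0$ to cancel the terms arising from $\partial_t \log\nu_t(a|s)$, I expect to arrive at
\begin{equation*}
    0 \le \sum_{s,a}(\nu(s,a)-\hat\nu(s,a))\log\frac{\hat\nu(a|s)}{\nu_0(a|s)} = D_{A|S}(\nu,\nu_0) - D_{A|S}(\nu,\hat\nu) - D_{A|S}(\hat\nu,\nu_0),
\end{equation*}
the last identity being a direct expansion of the three divergences. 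Translating back via \Cref{prop:SAGeometryLP} then yields \eqref{eq:pythagoras}.

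For the equality case, when each $\Pi_s$ is an affine section of $\Delta_\A$ the set $\tilde\Pi$ is an affine slice of $\cD$, so for $\nu\in\tilde\Pi$ the line $\hat\nu + t(\nu-\hat\nu)$ remains in $\tilde\Pi$ in a two-sided neighborhood of $t=0$, provided $\hat\nu$ lies in the relative interior of $\tilde\Pi$. I would justify the latter via the $-\log\nu(a|s)$ singularity of $D_{A|S}(\cdot,\nu_0)$: any boundary minimizer with $\hat\nu(s,a) = 0$ at a coordinate where some $\nu\in\tilde\Pi$ has $\nu(s,a) > 0$ would produce an infinitely negative directional derivative, contradicting optimality. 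The two-sided first-order condition then upgrades the Pythagorean inequality to equality.
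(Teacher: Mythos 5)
Your proposal is correct and takes essentially the same route as the paper: pass to state-action distributions via \Cref{prop:SAGeometryLP}, note that the image of the $s$-rectangular class $\{\nu^\pi:\pi\in\Pi\}$ is a convex polytope (an affine slice of $\cD$ in the equality case), and apply the Bregman Pythagorean theorem through the first-order optimality condition and the three-point identity, with the relative-interior argument upgrading the inequality to equality. The only difference is presentational: you carry out inline (the lifting of the state-wise constraints to linear constraints on $\nu$, including the bookkeeping at unvisited states, and the interiority of the projection via the logarithmic singularity) what the paper delegates to external citations and its appendix.
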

\begin{proof}
Consider the set $D\coloneqq\{\nu^\pi:\pi\in\Pi\}\subseteq\cD$ of state-action distributions arising from the policy class $\Pi$, which is again a polytope~\cite[Remark 55]{muller2022pomdps}. 
In particular, $D$ is convex and we can pass to the corresponding state-action distributions $\nu_0, \hat \nu, \nu$ and apply the Pythagorean theorem for Bregman divergences, see \Cref{sec:app:pythagoras}. 
If further $\Pi_s = \Delta_\A\cap\mathcal L_s$, then we have $D = \cD\cap\mathcal L$ for an affine subspace $\mathcal L$~\cite[Proposition 14]{muller2022pomdps}. 
Consequently, the Bregman projection $\hat \nu$ will always lie at the relative interior $\operatorname{int}(D)$ and we get equality in the Pythagorean theorem. 
\end{proof}

\subsection{Regularized linear programs and Hessian gradient flows}\label{subsec:H-GF}

We have seen that the state-action distributions of the optimal regularized policies $\pi^\star_\tau$ solve the linear program associated with the Markov decision process with a conditional entropic regularization. 
Here, we see that the solutions of regularized linear programs solve the gradient flow of the unregularized linear objective with respect to the Riemannian metric induced by the convex regularizer. 
We refer to~\cite{alvarez2004hessian} for a more general discussion of Hessian gradient flows, where slightly stronger assumptions on $\phi$ are made. 
We work in the following setting. 

\begin{setting}\label{setting:generalLP}
We consider the linear program 
\begin{equation}\label{eq:LP}
    \max c^\top x \quad \text{subject to } x\in P, 
\end{equation}
with cost $c\in\R^{d}$ and feasible region given by a polytope $P \subseteq \R^d$. 
Further, we consider a twice continuously differentiable convex function $\phi$ defined on a neighborhood of $\operatorname{int}(P)$ and assume that $\nabla^2\phi(x)$ is strictly positive definite on $TP$ for all $x\in\operatorname{int}(P)$, where $TP$ denotes the tangent space of the polytope $P$, which is given by the affine span. 
We define the Riemannian Hessian metric $g^\phi_x(v,w) \coloneqq v^\top \nabla^2\phi(x) w$ on $\operatorname{int}(P)$ 
and denote the gradient of $f$ with respect to $g^\phi$ by $\nabla^\phi f$. 
By $(x_t)_{t\in[0, T)}\subseteq\operatorname{int}(P)$ we denote a solution of the Hessian gradient flow
\begin{equation}\label{eq:H-GF}
    \partial_t x_t = \nabla^\phi f(x_t)
\end{equation}
with initial condition $x_0\in \operatorname{int}(P)$ and potential $f(x) = c^\top x$, where $T\in\R_{\ge0}\cup\{+\infty\}$. 
Finally, we denote the Bregman divergence induced by $\phi$ by $D_\phi$. 
\end{setting}

Note that $(x_t)_{t\in [0, T)} \subseteq \operatorname{int}(P)$ solves the Hessian gradient flow~\eqref{eq:H-GF}
if and only if we have 
\begin{equation}\label{eq:explicitFRGF}
    g_{x_t}^\phi(\partial_t x_t, v) = \langle \nabla^2 \phi(x_t) \partial_t x_t, v \rangle = \langle \nabla f(x_t), v \rangle 
    \quad \text{for all } v\in TP, t\in[0, T). 
\end{equation}
We can now formulate and prove the equivalence property between Hessian gradient flows of linear programs and solutions of Bregman regularized linear programs.

\begin{proposition}[Central path property]\label{prop:centralPathLP}
Consider Setting~\ref{setting:generalLP}.  
Then the Hessian gradient flow $(x_t)_{t\in[0,T)}$ of the linear program is characterized by 
\begin{equation}\label{eq:centralPath}
    x_t \in \argmax\left\{ c^\top x - t^{-1} D_\phi (x, x_0) : x \in P \right\} \quad \text{for all } t\in(0,T). 
\end{equation}
\end{proposition}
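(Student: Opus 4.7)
My plan is to match the first-order optimality condition of the regularized linear program with the gradient flow equation via a single integration in $t$. Using the chain rule $\frac{d}{dt}\nabla\phi(x_t) = \nabla^2\phi(x_t)\,\partial_t x_t$, I rewrite the gradient flow characterization~\eqref{eq:explicitFRGF} as
\[
    \Big\langle \tfrac{d}{dt}\nabla\phi(x_t), v \Big\rangle = \langle c, v\rangle \quad \text{for all } v\in TP \text{ and } t\in[0,T).
\]
Integrating from $0$ to $t$ then yields the pointwise identity
\[
    \nabla\phi(x_t) - \nabla\phi(x_0) - t c \in (TP)^\perp.
\]

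Next, I would analyze the regularized problem on the right-hand side of~\eqref{eq:centralPath}. Since $\phi$ is strictly convex along $TP$, the Bregman term $D_\phi(\cdot, x_0)$ is strictly convex along $TP$, and hence $F_t(x) \coloneqq c^\top x - t^{-1} D_\phi(x, x_0)$ is strictly concave on $P$, so it admits at most one maximizer. Using the identity $\nabla_x D_\phi(x, x_0) = \nabla\phi(x) - \nabla\phi(x_0)$, the first-order optimality condition for a point $\bar x \in \operatorname{int}(P)$ to maximize $F_t$ over $P$ reads
\[
    \nabla F_t(\bar x) = c - t^{-1}\bigl(\nabla\phi(\bar x) - \nabla\phi(x_0)\bigr) \in (TP)^\perp,
\]
which is exactly the condition we just derived for $x_t$. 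By concavity of $F_t$ and convexity of $P$, this interior first-order condition is sufficient for global optimality, and strict concavity forces the maximizer to be unique. Combining these observations, $x_t$ is the unique maximizer in~\eqref{eq:centralPath}.

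The one delicate point is the passage from the variational inequality on the tangent cone of $P$ at $\bar x$ to an orthogonality condition on $TP$, which requires $\bar x \in \operatorname{int}(P)$. This is the step I expect to be the main subtlety of the argument. It is not a genuine obstacle here, however, because Setting~\ref{setting:generalLP} posits that the gradient flow trajectory $(x_t)_{t\in[0,T)}$ lies in $\operatorname{int}(P)$; one therefore establishes the central path property by verifying that the interior point $x_t$ solves the first-order condition and invoking strict concavity, rather than by arguing independently that the maximizer of $F_t$ is interior. The strict positive definiteness of $\nabla^2\phi$ on $TP$ assumed in Setting~\ref{setting:generalLP} is precisely what makes the Riemannian structure and the reparametrization $t\mapsto \nabla\phi(x_t)$ above well-defined.
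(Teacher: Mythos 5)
Your proposal is correct and follows essentially the same route as the paper: integrate the Hessian gradient flow identity $\langle \nabla^2\phi(x_s)\partial_s x_s, v\rangle = \langle c, v\rangle$ over $[0,t]$ to obtain $\langle \nabla\phi(x_t)-\nabla\phi(x_0)-tc, v\rangle = 0$ for all $v\in TP$, and recognize this as the first-order optimality condition of the interior point $x_t$ for the concave regularized objective, which suffices for global optimality. Your additional remarks on strict concavity and uniqueness of the maximizer are fine but not needed for the stated membership $x_t\in\argmax$ (and strictness of the convexity on all of $P$ is slightly more than Setting~\ref{setting:generalLP} guarantees, since positive definiteness of $\nabla^2\phi$ is only assumed on $\operatorname{int}(P)$).
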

\begin{proof}
Note that by the first-order stationarity conditions for equality-constrained optimization, a point $\hat x\in\operatorname{int}(P)$ maximizes $g( x)\coloneqq c^\top x - t^{-1} D_\phi ( x,  x_0)$ over the feasible region $P$ of the linear program if and only if $\langle \nabla g(\hat{ x}_t), v\rangle = 0$ for all $v\in TP$. 
Direct computation yields $\nabla g({ x}) = c - t^{-1} (\nabla\phi( x) - \nabla\phi( x_0))$ and hence the maximizers $\hat{ x}$ of $g$ over $P$ are characterized by 
\begin{equation*}
    t\langle c, v\rangle = \langle \nabla\phi(\hat{ x}) - \nabla\phi( x_0), v\rangle \quad \text{for all } v\in TP. 
\end{equation*} 
On the other hand, for the gradient flow, we can use~\eqref{eq:explicitFRGF} and compute for $v\in TP$ 
\begin{align*}
    \langle \nabla\phi( x_t) - \nabla\phi( x_0), v \rangle & = \int_{0}^t \partial_s \langle \nabla \phi( x_s), v \rangle \D s 
    \\ & = \int_{0}^t \langle \nabla^2 \phi( x_s)\partial_s  x_s, v \rangle \D s 
    \\ & = \int_{0}^t \langle \nabla f(x_s), v \rangle \D s 
    \\ & = \int_{0}^t \langle c, v \rangle \D s = t \langle c, v \rangle . 
\end{align*}
This shows $x_t$ maximizes $g$ over $P$ as claimed. 
\end{proof}

\begin{corollary}[Sublinear convergence]\label{cor:sublinearRate}
    Consider Setting~\ref{setting:generalLP} and denote the face of maximizers of the linear program~\eqref{eq:LP} by $F^\star$ and fix $x^\star \in \argmin_{ x\in F^\star} D_\phi ( x,  x_0)$. 
    Then it holds that 
    \begin{equation}\label{eq:sublinearConvergence}
        c^\top  x^\star - c^\top x_t \le \frac{D_\phi ( x^\star,  x_0) - D_\phi ( x_t,  x_0)}t \le \frac{D_\phi ( x^\star,  x_0)}t \quad \text{for all } t\in[0, T). 
    \end{equation}
\end{corollary}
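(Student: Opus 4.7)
The plan is to derive both inequalities as essentially a one-line consequence of the central path characterization in \Cref{prop:centralPathLP}, together with nonnegativity of the Bregman divergence.

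First, I would invoke \Cref{prop:centralPathLP} applied to the point $x_t$: since $x_t$ maximizes the regularized objective $g(x) = c^\top x - t^{-1} D_\phi(x, x_0)$ over the feasible region $P$, and since $x^\star \in F^\star \subseteq P$ is feasible, we have
\begin{equation*}
    c^\top x_t - t^{-1} D_\phi(x_t, x_0) \ge c^\top x^\star - t^{-1} D_\phi(x^\star, x_0).
\end{equation*}
Rearranging this inequality so that the reward gap is isolated on the left immediately gives the first inequality in \eqref{eq:sublinearConvergence}.

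For the second inequality, I would use the standard fact that Bregman divergences induced by convex $\phi$ are nonnegative, so $D_\phi(x_t, x_0) \ge 0$; dropping this term on the right only makes the bound larger and yields $\frac{D_\phi(x^\star, x_0)}{t}$ as claimed.

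There is no real obstacle here: the only point worth flagging is that one must verify $x^\star$ is admissible as a competitor in the variational characterization of $x_t$, which holds because $F^\star \subseteq P$ by definition of $F^\star$ as the face of maximizers, and that $D_\phi(x^\star, x_0)$ is finite, which is guaranteed because $x_0 \in \operatorname{int}(P)$ places $x_0$ in the domain of $\phi$ and of the Bregman divergence $D_\phi(\cdot, x_0)$ (the infimum in the definition of $x^\star$ is taken over $F^\star$, so we may assume finiteness at the minimizer).
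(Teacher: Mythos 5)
Your proposal is correct and follows exactly the paper's argument: apply the central path property of \Cref{prop:centralPathLP} with $x^\star$ as a feasible competitor, rearrange, and drop the nonnegative term $D_\phi(x_t, x_0)$ for the second inequality. The additional remarks on feasibility of $x^\star$ and finiteness of $D_\phi(x^\star, x_0)$ are sound but not needed beyond what the paper implicitly assumes.
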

\begin{proof}
    By the central path property, we have 
    \begin{equation*}
        c^\top  x_t - t^{-1} D_\phi ( x_t,  x_0) \ge c^\top  x^\star - t^{-1} D_\phi ( x^\star,  x_0).
    \end{equation*}
    Rearranging yields the result. 
\end{proof}

\begin{corollary}[Implicit bias of Hessian gradient flows of LPs]\label{cor:implicitBias}
    Consider Setting~\ref{setting:generalLP} and denote the face of maximizers of the linear program~\eqref{eq:LP} by $F^\star$, assume that $\phi$ is strictly convex and continuous on its domain, and assume that the Hessian gradient flow $(x_t)_{t\ge0}$ exists for all times. 
    Then it holds that 
    \begin{equation}
        \lim_{t\to+\infty} x_t =  x^\star = \argmin_{ x\in F^\star} D_\phi ( x,  x_0). 
    \end{equation}
    In words, the Hessian gradient flow converges to the Bregman projection of $ x_0$ to $F^\star$. 
\end{corollary}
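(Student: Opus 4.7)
The plan is to combine the central path property from \Cref{prop:centralPathLP} with the sublinear rate from \Cref{cor:sublinearRate}, and close the argument by a limiting procedure that uses strict convexity of $\phi$ to force uniqueness of the accumulation point.

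First, I would extract boundedness of the orbit. Applying the maximization property~\eqref{eq:centralPath} with the comparison point $x^\star \in F^\star$ gives
\begin{equation*}
    c^\top x_t - t^{-1} D_\phi(x_t, x_0) \ge c^\top x^\star - t^{-1} D_\phi(x^\star, x_0),
\end{equation*}
and since $c^\top x^\star \ge c^\top x_t$ (because $x^\star$ is a maximizer of the LP), rearranging yields $D_\phi(x_t, x_0) \le D_\phi(x^\star, x_0)$ for every $t \in [0, +\infty)$. Hence the curve $(x_t)_{t \ge 0}$ stays in a sublevel set of $D_\phi(\cdot, x_0)$, which in the setting of interest (where $P$ is compact, as is the state--action polytope $\cD$) is precompact; in the general setting this follows from the strict convexity and continuity assumption on $\phi$. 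So the $\omega$-limit set of $(x_t)$ is nonempty.

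Second, let $\bar x$ be any accumulation point, i.e.\ $x_{t_n} \to \bar x$ for some $t_n \to +\infty$. By the sublinear estimate~\eqref{eq:sublinearConvergence} we have $c^\top x^\star - c^\top x_{t_n} \le D_\phi(x^\star, x_0)/t_n \to 0$, so in the limit $c^\top \bar x = c^\top x^\star$ and thus $\bar x \in F^\star$ (closure of the face of maximizers is itself, as $F^\star$ is the closed face cut out by a linear equality). Next, continuity of $\phi$ on its domain and of $\nabla\phi$ (by twice differentiability) extends continuity of $D_\phi(\cdot, x_0)$ to the relevant points, and together with the bound from step one yields
\begin{equation*}
    D_\phi(\bar x, x_0) \le \liminf_{n \to \infty} D_\phi(x_{t_n}, x_0) \le D_\phi(x^\star, x_0).
\end{equation*}

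Third, since $\phi$ is strictly convex, so is $D_\phi(\cdot, x_0)$ (its Hessian equals $\nabla^2 \phi$ which is positive definite on $TP$), and therefore the Bregman projection $x^\star = \argmin_{x \in F^\star} D_\phi(x, x_0)$ is the unique minimizer on the convex set $F^\star$. Combined with $\bar x \in F^\star$ and $D_\phi(\bar x, x_0) \le D_\phi(x^\star, x_0)$, uniqueness forces $\bar x = x^\star$. Since every accumulation point of the precompact curve coincides with $x^\star$, the flow converges: $\lim_{t \to +\infty} x_t = x^\star$.

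The main obstacle I foresee is handling compactness cleanly in the abstract setting~\ref{setting:generalLP}; for the MDP applications in the paper this is automatic because $\cD$ is a compact polytope, but for a general unbounded $P$ one must argue that sublevel sets of $D_\phi(\cdot, x_0)$ intersected with $P$ are bounded, which is where the global existence of the flow for all times $t \ge 0$ and the strict convexity of $\phi$ are implicitly needed.
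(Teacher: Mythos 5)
Your proposal is correct and follows essentially the same route as the paper's proof: accumulation points exist by compactness, they lie in $F^\star$ by the sublinear rate of \Cref{cor:sublinearRate}, and the central-path inequality gives $D_\phi(\bar x, x_0)\le D_\phi(x', x_0)$ for every $x'\in F^\star$, so strict convexity identifies every accumulation point with the unique Bregman projection. Your additional remark that the bound $D_\phi(x_t,x_0)\le D_\phi(x^\star,x_0)$ already confines the orbit to a sublevel set is a small refinement of the paper's direct appeal to compactness of $P$, but the argument is otherwise the same.
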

\begin{proof} 
By compactness of $P$, the sequence $( x_{t_n})_{n\in\N}$ has at least one accumulation point for any sequence $t_n\to+\infty$. 
Hence, we can assume without loss of generality that $ x_{t_n}\to\hat x$ and it remains to identify $\hat x$ as the information projection $ x^\star\in F^\star$. 

Surely, we have $\hat x\in F^\star$ as $c^\top \hat x = \lim_{n\to\infty} c^\top  x_{t_n} = \max_{ x\in P} c^\top  x$ by \Cref{cor:sublinearRate}. 
Further, by the central path property, we have for any optimizer $ x'\in F^\star$ that 
\begin{equation*}
    c^\top  x_t - t^{-1} D_\phi ( x_t,  x_0) \ge 
    c^\top x' - t^{-1} D_\phi ( x',  x_0) 
\end{equation*}
and therefore 
\begin{equation*}
    D_\phi ( x',  x_0)  - D_\phi ( x_t,  x_0) \ge t c^\top ( x'- x_t) \ge0.  
\end{equation*}
Hence, we have  
\begin{align*}
    D_\phi (\hat x,  x_0) = \lim_{n\to\infty} D_\phi ( x_{t_n},  x_0)  \le D_\phi ( x',  x_0)
\end{align*}
and can conclude by minimizing over $ x'\in F^\star$. 
\end{proof}

\section{Geometry and Sublinear Convergence of Kakade Gradient Flows}\label{sec:geometry} 
Our goal is to study the solutions $\pi_\tau^\star$ of the entropy-regularized reward $R_\tau$. 
For linear programs, we have seen in \Cref{subsec:H-GF} that the solutions of the regularized problems solve the corresponding Hessian gradient flow. 
Recall that the entropy-regularized reward optimization problem is equivalent to a linear program in state-action space with a conditional KL regularization. 
Hence, the state-action distributions solve a Hessian gradient flow and consequently, the optimal regularized policies solve a gradient flow with respect to some metric. 
In this section, we study this Riemannian metric on the space of policies and provide an explicit expression of the gradient dynamics. 

\subsection{The Kakade metric and policy gradient theorems} 

The following metric on the policy space was proposed in the context of natural gradients~\cite{kakade2001natural}. 

\begin{definition}[Kakade metric]
    We call the Riemannian metric $g^\K$ on $\operatorname{int}(\Delta_\A^\bS)$ defined by 
    \begin{equation}
        g_\pi^\K(w_1,w_2) \coloneqq \sum_{s\in\bS} d^\pi(s) \sum_{a\in\A} \frac{w_1(s,a)w_2(s,a)}{\pi(a|s)} \quad \text{for all } w_1, w_2\in T \Delta_\A^\bS
    \end{equation}
    the \emph{Kakade metric}. 
    For differentiable  $f\colon\Delta_\A^\bS\to \R$, we denote the Riemannian gradient by $\nabla^\K f(\pi)$. 
\end{definition}

The Kakade metric has been referred to as the {Fisher-Rao metric} on $\Delta_\A^\bS$ as this reduces to the Fisher-Rao metric if $\lvert \bS\rvert=1$, see~\cite{kerimkulov2023fisher}. 
We choose the name Kakade metric here, as there exist multiple extensions of the Fisher-Rao metric to products of simplices. 
For example, in a game-theoretic context, when considering independently chosen strategies of the players, it might be more natural to work with the product metric, meaning the sum of the Fisher-Rao metrics over the individual factors, which corresponds to the pullback of the Fisher-Rao metric on the simplex of joint distributions under the independence model~\cite{montufar2014fisher,boll2024geometric}. 
Other weightings of the Fisher-Rao metrics over the individual factors are also possible, see~\cite{montufar2014fisher} for an in-depth discussion of different choices and their invariance properties. 
Further, this specific Riemannian metric has been described as the limit of weighted Fisher-Rao metrics on the finite-horizon path spaces~\cite{bagnell2003covariant, peters2003reinforcement, wolfer2023information}. 
Note that although the Kakade metric is closely connected to the weighted entropy $H_\K$, it is not the Hessian metric of $H_\K$, in fact, it is not a Hessian metric at all~\cite[Remark 13]{muller2024geometry}. 

In general, the Kakade metric is only a pseudo-metric. The following assumption ensures positive definiteness, which we assume for the remainder of our analysis. 

\begin{asu}[State exploration]\label{asu:exploration}
For any policy $\pi\in\operatorname{int}(\Delta_\A^\bS)$ the discounted state distribution is positive, meaning that $d^\pi(s)>0$ for all $s\in\bS$. 
\end{asu}

Kakade gradient flows are well-posed, meaning that they admit a unique solution $(\pi_t)_{t\in\R_{\ge0}}$, both in the unregularized case~\cite{alvarez2004hessian, muller2024geometry} and the regularized case~\cite{mueller2023thesis,kerimkulov2023fisher}. 

The \emph{policy gradient theorem} states that 
\begin{align}
\partial_{\theta_i} R(\pi) = \frac1{1-\gamma} \sum_{s\in\bS} d^{\pi_\theta}(s) \sum_{a\in\A} \partial_{\theta_i} \pi_\theta(a|s) A^{\pi_\theta}(s,a)
\end{align}
and connects the gradient of the reward to the advantage function~\cite{sutton1999policy,agarwal2021theory}. 
Inspired by this, we provide an explicit formula for the gradient of the reward with respect to the Kakade metric. 

\begin{theorem}[Gradient with respect to the Kakade metric]\label{thm:PGtheorem}
Let \Cref{asu:exploration} hold. 
Then for all $\pi\in\operatorname{int}(\Delta_\A^\bS)$ and $a\in\A$, $s\in\bS$ it holds that 
\begin{equation}\label{eq:PG}
\nabla^\K R(\pi)(s,a) = (1-\gamma)^{-1}A^{\pi}(s,a) \pi(a|s). 
\end{equation}
\end{theorem}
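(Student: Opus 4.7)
The plan is to verify the identity by appealing directly to the defining variational characterization of the Riemannian gradient: for $\pi \in \operatorname{int}(\Delta_\A^\bS)$, the vector $u = \nabla^\K R(\pi)$ is the unique element of $T_\pi \Delta_\A^\bS$ satisfying
\begin{equation*}
    g_\pi^\K(u, v) = dR(\pi)[v] \quad \text{for all } v \in T_\pi \Delta_\A^\bS,
\end{equation*}
where $T_\pi \Delta_\A^\bS = \{ v \in \R^{\bS\times\A} : \sum_a v(s,a) = 0 \text{ for all } s \in \bS \}$. By \Cref{asu:exploration} and positivity of $\pi$ on $\operatorname{int}(\Delta_\A^\bS)$, the Kakade form is indeed a genuine Riemannian metric on this tangent space, so the gradient exists and is unique.

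First I would compute the differential $dR(\pi)[v]$ in the ambient space $\R^{\bS \times \A}$. The standard policy gradient theorem (which can be derived from the performance difference lemma of \Cref{lem:PD} by taking $\pi_1 = \pi + \epsilon v$ and letting $\epsilon \to 0$, using the continuous dependence of $\nu^{\pi_1}$ on $\pi_1$) gives
\begin{equation*}
    dR(\pi)[v] = \frac{1}{1-\gamma} \sum_{s \in \bS} d^\pi(s) \sum_{a \in \A} v(s,a)\, Q^\pi(s,a).
\end{equation*}
Since $v \in T_\pi \Delta_\A^\bS$ satisfies $\sum_a v(s,a) = 0$, subtracting $V^\pi(s)$ inside the inner sum changes nothing, hence
\begin{equation*}
    dR(\pi)[v] = \frac{1}{1-\gamma} \sum_{s \in \bS} d^\pi(s) \sum_{a \in \A} v(s,a)\, A^\pi(s,a).
\end{equation*}

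Next I would verify that the proposed vector $u(s,a) \coloneqq (1-\gamma)^{-1} A^\pi(s,a) \pi(a|s)$ lies in $T_\pi \Delta_\A^\bS$. This reduces to the identity $\sum_a \pi(a|s) A^\pi(s,a) = 0$, which follows from $V^\pi(s) = \sum_a \pi(a|s) Q^\pi(s,a)$ (a direct consequence of unrolling one step in the definition of $V^\pi$). Finally, substituting $u$ into the Kakade inner product cancels the factor $\pi(a|s)$ in the denominator and yields
\begin{equation*}
    g_\pi^\K(u, v) = \sum_{s} d^\pi(s) \sum_{a} \frac{(1-\gamma)^{-1} A^\pi(s,a) \pi(a|s)\, v(s,a)}{\pi(a|s)} = \frac{1}{1-\gamma} \sum_{s} d^\pi(s) \sum_{a} A^\pi(s,a) v(s,a),
\end{equation*}
which matches $dR(\pi)[v]$. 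Uniqueness of the Riemannian gradient then gives $\nabla^\K R(\pi) = u$.

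The only real obstacle is making the first step rigorous, i.e., establishing $dR(\pi)[v] = (1-\gamma)^{-1} \sum_s d^\pi(s) \sum_a v(s,a) Q^\pi(s,a)$ directly on the simplex (rather than through a smooth parameterization $\theta \mapsto \pi_\theta$). This can either be cited from the policy gradient theorem applied to the trivial identity parameterization $\pi_\theta = \theta$ on $\operatorname{int}(\Delta_\A^\bS)$, or proved from scratch by differentiating $R(\pi) = r^\top \nu^\pi$ using the explicit expression $\nu^\pi = (1-\gamma) \mu^\top (I - \gamma P_\pi)^{-1}$ and applying the resolvent identity; both routes are standard and short.
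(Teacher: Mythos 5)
Your proposal is correct and follows essentially the same route as the paper: both verify the defining variational identity $g_\pi^\K(u,v)=dR(\pi)[v]$ for $u=(1-\gamma)^{-1}A^\pi\odot\pi$, compute the differential of $R(\pi)=r^\top\nu^\pi$ (the paper does this via its lemma on derivatives of state-action distributions, i.e.\ exactly your resolvent-identity route), and use $\sum_a v(s,a)=0$ to pass from $Q^\pi$ to $A^\pi$ before matching with the Kakade inner product. Your additional explicit check that $A^\pi\odot\pi$ is tangent, via $\sum_a\pi(a|s)A^\pi(s,a)=0$, corresponds to the paper's brief remark that $A^\pi\odot\pi\in T\Delta_\A^\bS$.
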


In the proof, we use the following auxiliary result. 

\begin{lemma}[Derivatives of state-action distributions, \cite{muller2022pomdps}]\label{lem:derivativeSAD}
It holds that 
\begin{align}
    \frac{\partial\nu^\pi}{\partial\pi(a|s)} = d^\pi(s)(I-\gamma P_\pi^T)^{-1} e_{(s,a)},
\end{align}
where $P_\pi\in\Delta_{\bS\times\A}^{\bS\times\A}$ is given by $P_\pi(s', a'|s,a) = \pi(a'|s')P(s'|s,a)$. 
\end{lemma}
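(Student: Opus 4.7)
The plan is to verify the candidate gradient $G(s,a)\coloneqq(1-\gamma)^{-1}A^\pi(s,a)\pi(a|s)$ satisfies the defining equation of the Riemannian gradient, namely $g^\K_\pi(G,v)=dR(\pi)[v]$ for every $v\in T\Delta_\A^\bS$, and then to invoke uniqueness of the gradient via positive definiteness of $g^\K$ on the tangent space (which is guaranteed by Assumption~\ref{asu:exploration}).

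First, I would compute the differential of $R$ at $\pi$ explicitly by combining the identity $R(\pi)=r^\top\nu^\pi$ with Lemma~\ref{lem:derivativeSAD}. Indeed, writing
\begin{align*}
    dR(\pi)[v] = \sum_{s,a} v(s,a)\,r^\top\frac{\partial\nu^\pi}{\partial\pi(a|s)} = \sum_{s,a} v(s,a)\,d^\pi(s)\,\bigl[(I-\gamma P_\pi)^{-1}r\bigr](s,a),
\end{align*}
and noting that the Bellman equation for $Q^\pi$ gives $Q^\pi=(1-\gamma)(I-\gamma P_\pi)^{-1}r$, I obtain
\begin{align*}
    dR(\pi)[v] = (1-\gamma)^{-1}\sum_{s\in\bS}d^\pi(s)\sum_{a\in\A}v(s,a)\,Q^\pi(s,a).
\end{align*}

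Second, I would pass from $Q^\pi$ to $A^\pi$. Since $v\in T\Delta_\A^\bS$ means $\sum_a v(s,a)=0$ for every $s$, the $V^\pi(s)$ contribution vanishes and
\begin{align*}
    dR(\pi)[v] = (1-\gamma)^{-1}\sum_{s\in\bS}d^\pi(s)\sum_{a\in\A}v(s,a)\,A^\pi(s,a).
\end{align*}
Next, I would check that the candidate $G$ is a legitimate tangent vector: $\sum_{a}G(s,a) = (1-\gamma)^{-1}\sum_a A^\pi(s,a)\pi(a|s) = (1-\gamma)^{-1}(V^\pi(s)-V^\pi(s)) = 0$. Finally, plugging $G$ into the Kakade metric and using the crucial cancellation of $\pi(a|s)$,
\begin{align*}
    g^\K_\pi(G,v) = \sum_{s}d^\pi(s)\sum_{a}\frac{G(s,a)v(s,a)}{\pi(a|s)} = (1-\gamma)^{-1}\sum_{s}d^\pi(s)\sum_{a}A^\pi(s,a)v(s,a) = dR(\pi)[v].
\end{align*}

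Under Assumption~\ref{asu:exploration}, $d^\pi(s)>0$ for all $s$, so $g^\K_\pi$ is a genuine (non-degenerate) inner product on $T\Delta_\A^\bS$, which forces $G=\nabla^\K R(\pi)$ and closes the argument. The only mildly subtle point, and the one I would treat carefully, is the replacement of $Q^\pi$ by $A^\pi$: one must remember that the identity holds only when tested against tangent vectors, since $G$ itself was constructed so that $\sum_a G(s,a)=0$. Everything else is a mechanical assembly of the policy gradient identity coming from Lemma~\ref{lem:derivativeSAD} with the cancellation $\pi(a|s)/\pi(a|s)=1$ built into the Kakade metric.
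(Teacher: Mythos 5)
Your proposal does not prove the statement in question. The statement is Lemma~\ref{lem:derivativeSAD}, the formula $\frac{\partial\nu^\pi}{\partial\pi(a|s)} = d^\pi(s)(I-\gamma P_\pi^T)^{-1} e_{(s,a)}$ for the derivative of the state-action distribution with respect to a single policy entry. What you have written is instead a proof of Theorem~\ref{thm:PGtheorem} (the expression of the Kakade gradient via the advantage function), and in the very first display of your argument you \emph{invoke} Lemma~\ref{lem:derivativeSAD} as a known fact to compute $dR(\pi)[v]$. So with respect to the assigned statement the argument is circular: the lemma is assumed, not derived. (For what it is worth, your argument for \Cref{thm:PGtheorem} itself is essentially identical to the paper's proof of that theorem, including the passage from $Q^\pi$ to $A^\pi$ via $\sum_a v(s,a)=0$; but that is not the statement you were asked to prove. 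The paper does not prove Lemma~\ref{lem:derivativeSAD} in-text either, citing \cite{muller2022pomdps} instead.)

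A genuine proof of the lemma has to start from a closed-form expression for $\nu^\pi$ and differentiate it. One route: the state-action process is a Markov chain on $\bS\times\A$ with kernel $P_\pi$, so $\nu^\pi = (1-\gamma)(I-\gamma P_\pi^\top)^{-1}\nu_0$ with $\nu_0(s,a)=\mu(s)\pi(a|s)$. Differentiating with the product rule and the identity $\partial(M^{-1}) = -M^{-1}(\partial M)M^{-1}$ gives two terms, one from $\partial_{\pi(a|s)}\nu_0 = \mu(s)e_{(s,a)}$ and one from $\partial_{\pi(a|s)}P_\pi^\top$ applied to $(I-\gamma P_\pi^\top)^{-1}\nu_0$; the latter produces $e_{(s,a)}$ times $\gamma\sum_{\tilde s,\tilde a}P(s|\tilde s,\tilde a)\nu^\pi(\tilde s,\tilde a)/(1-\gamma)$, which by the Bellman flow constraint $\ell_s(\nu^\pi)=0$ equals $(d^\pi(s)-(1-\gamma)\mu(s))/(1-\gamma)$ times $e_{(s,a)}$. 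Summing the two contributions yields exactly $d^\pi(s)(I-\gamma P_\pi^\top)^{-1}e_{(s,a)}$. None of these steps appear in your proposal.
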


\begin{proof}[Proof of \Cref{thm:PGtheorem}]
First, note that $A^\pi\odot\pi\in T\Delta_\A^\bS$ where $(A^\pi\odot\pi)(s,a) = A^\pi(s,a)\pi(a|s)$ and that $R$ and $g^\K$ can be extended to a neighborhood of $\Delta_\A^\bS$. 
It suffices to show 
\begin{align*}
    (1-\gamma)^{-1} g_\pi^\K(A^\pi\odot\pi, w) = \partial_w R(\pi) 
    \quad \text{for all } w\in T\Delta_\A^\bS. 
\end{align*}
Since $R(\pi) = r^\top \nu^\pi$, we have $\frac{\partial R(\pi)}{\partial\pi(a|s)} = r^\top \frac{\partial\nu^\pi}{\partial\pi(a|s)}$.
For any tangent vector $w\in T\Delta_\A^\bS$ we can use \Cref{lem:derivativeSAD} to compute 
\begin{align*}
    \partial_w R(\pi) & 
    = \sum_{s\in\bS, a\in\A} w(s,a) \cdot\frac{\partial R(\pi)}{\partial\pi(a|s)}  \\ & 
    = \sum_{s\in\bS, a\in\A} d^\pi(s) w(s,a)\left\langle (I-\gamma P_\pi^T)^{-1} e_{(s,a)}, r \right\rangle \\ & 
    = \sum_{s\in\bS, a\in\A} d^\pi(s) w(s,a)\left\langle  e_{(s,a)}, (I-\gamma P_\pi)^{-1}r \right\rangle \\ & 
    = (1-\gamma)^{-1}\sum_{s\in\bS, a\in\A} d^\pi(s) w(s,a) Q^\pi(s,a) \\ & 
    = (1-\gamma)^{-1}\sum_{s\in\bS, a\in\A} d^\pi(s) w(s,a) (Q^\pi(s,a) - V^\pi(s)) \\ & 
    = (1-\gamma)^{-1} g_\pi^\K(A^\pi\odot \pi, w),
\end{align*}
where we have used the Bellman equation $Q^\pi = (1-\gamma)(I-\gamma P_\pi)^{-1}r$ as well as $\sum_{a\in\A}w(s,a) = 0$, which holds for tangent vectors $w\in T\Delta_\A^\bS$. 
\end{proof}

Note that the gradient of the reward with respect to the Kakade metric is independent of the initial distribution $\mu\in\Delta_\bS$, both for the regularized and the unregularized reward. 

\begin{definition}[Kakade gradient flow]
We say that $(\pi_t)_{t\in[0,T)}\subseteq \operatorname{int}(\Delta_\A^\bS)$ solves the \emph{Kakade gradient flow} if 
\begin{align}\label{eq:K-GF}
    \partial_t \pi_t = \nabla^\K R(\pi_t).
\end{align}
\end{definition}

By \Cref{thm:PGtheorem}, a solution of the Kakade gradient flow satisfies 
\begin{align}\label{eq:explicitExpressionKGF}
    \partial_t \pi_t(a|s) = (1-\gamma)^{-1} A^{\pi_t}(s,a)\pi_t(a|s). 
\end{align}
This explicit expression allows an explicit analysis of the gradient flow and thus entropy regularization. 

\subsection{State-action geometry of Kakade gradient flows} 
Our goal is to use tools from Hessian gradient flows, but the Kakade metric is not a Hessian metric. 
However, it was shown that the policy polytope $\Delta_\A^\bS$ endowed with the Kakade metric is isometric to the state-action polytope endowed with the Hessian metric induced by the conditional entropy~\cite{muller2024geometry}. 
This allows us to borrow from the results on Hessian gradient flows. 

\begin{definition}[Conditional Fisher-Rao metric]
We call the metric $g^{A|S}$ on $\operatorname{int}(\cD)$ given by 
\begin{align}
    g^{A|S}_\nu(w_1,w_2) \coloneqq w_1^\top \nabla^2 H_{A|S}(\nu) w_2 \quad \text{for } w_1, w_2\in T\cD 
\end{align}
the \emph{conditional Fisher-Rao metric} and the denote the corresponding gradient by $\nabla^{A|S}$. 
\end{definition}

It is elementary to check the convexity of $H_{A|S}$ on $\R_{>0}^{\A\times\bS}$, but it also easily seen that $\nabla^2 H_{A|S}$ has zero eigenvalues. 
However, it can be shown that $\nabla^2 H_{A|S}$ is strictly definite on $T\cD$ and therefore induces a Riemannian metric on $\operatorname{int}(\cD)$, see~\cite{muller2024geometry}. 

The following result relates the Kakade metric and Kakade divergence to the Hessian metric and Bregman divergence of the conditional entropy $H_{A|S}$. 

\begin{theorem}[State-action geometry of the Kakade metric, \cite{{muller2024geometry}}]\label{thm:SAGeometryKakade}
Consider a finite Markov decision process and let \Cref{asu:exploration} hold. 
Then the mapping
\begin{align}
    \begin{split}\label{eq:isometry}
        \Xi\colon(\operatorname{int}(\Delta_\A^\bS), g^\K) & \to (\operatorname{int}(\cD), g^{A|S}) \\ 
        \pi & \mapsto \nu^\pi \\ \nu(\cdot|\cdot) & \; \leftmapsto \; \nu 
    \end{split}
\end{align}
between policies and state-action distributions is an isometric diffeomorphism, where the conditioning map is defined in~\eqref{eq:conditioning}.
On the whole conditional probability polytope, $\Xi\colon \Delta_\A^\bS \to \cD$, $\pi\mapsto \nu^\pi$ is a continuous bijection with continuous inverse.
\end{theorem}

As isometries map gradient flows to gradient flows, \Cref{thm:SAGeometryKakade} implies that $(\pi_t)_{t\ge0}$ solves the Kakade gradient flow $\partial_t R(\pi_t)= \nabla^\K R(\pi_t)$ if and only if $(\nu_t)_{t\ge0}=(\nu^{\pi_t})_{t\ge0}$ solves the conditional Fisher-Rao gradient flow $\partial_t \nu_t = \nabla^{A|S} f(\nu_t)$ if the following diagram commutes 
\begin{equation}
    \begin{tikzcd}
        \Delta_\A^\bS \arrow[r, "\Xi"] \arrow[rd, "R" below left] & \cD \arrow[d,"f"] \\ 
        & \mathbb R 
    \end{tikzcd}, \quad \text{where }
    \begin{tikzcd}
        \pi \arrow[r, mapsto] \arrow[rd, mapsto] & \nu^\pi \arrow[d, mapsto] \\ 
        & R(\pi). 
    \end{tikzcd}
\end{equation}
The isometry property allows us to transfer the theory on Hessian gradient flows of linear programs to Kakade gradient flows despite the Kakade metric not being Hessian. 

\begin{lemma}[Isometries preserve gradient flows]\label{lem:isometries}
    Consider an isometric diffeomorphism 
    \begin{align}
        \Xi \colon (\mathcal M, g^{\mathcal M}) \to (\mathcal N, g^{\mathcal N})
    \end{align}
    between two Riemannian manifolds and a differentiable function $g\colon \mathcal N\to\mathbb R$ and set $f\coloneqq g\circ\Xi$. Further, we fix an open interval $I\subseteq\mathbb R$ and consider two differentiable curves $x\colon I\to\mathcal M$ and $y = \Xi\circ x\colon I\to\mathcal N$. 
    Then the following statements are equivalent: 
    \begin{enumerate}
        \item The curve $x\colon I\to\mathcal M$ is a gradient flow of $f$, meaning that 
        \begin{align}\label{eq:GFx}
            g^{\mathcal M}_{x_t}(\partial_t x_t, v) = df(x_t) v \quad \text{for all } v\in T_{x_t} \mathcal M, t\in I. 
        \end{align}
        \item The curve $y\colon I\to\mathcal N$ is a gradient flow of $g$, meaning that 
        \begin{align}
            g^{\mathcal N}_{y_t}(\partial_t y_t, w) = dg(y_t) w \quad \text{for all } w\in T_{y_t} \mathcal N, t\in I. 
        \end{align}
    \end{enumerate} 
\end{lemma}
\begin{proof}
Assume first that $x$ satisfies the gradient flow equation~\eqref{eq:GFx}. 
As $\Phi$ is a diffeomorphism, for any $w\in T_{y_t} \mathcal N$ there is a tangent vector $v\in T_{y_t} \mathcal M$ such that $d\Xi(x_t)v=w$. 
Using the isometry property and the gradient flow equation, we check
\begin{align*}
    g^{\mathcal N}_{y_t}(\partial_t y_t, w) = g^{\mathcal N}_{\Xi(x_t)}(d\Xi(x_t)\partial_t x_t, d\Xi(x_t)v) = g^{\mathcal M}_{x_t}(\partial_t x_t, v) = df(x_t)v = dg(\Xi(x_t)) d\Xi(x_t) v = dg(y_t) w. 
\end{align*}
The equivalence follows from symmetry.
\end{proof}

\begin{theorem}[Implications from Hessian gradient flows]\label{cor:implications}
Let \Cref{asu:exploration} hold, denote the set of optimal policies by  $\Pi^\star\coloneqq\{ \pi\in\Delta_\A^\bS : R(\pi) = R^\star \}$, and fix an initial policy $\pi_0\in\operatorname{int}(\Delta_\A^\bS)$. 
Then the following statements hold: 
\begin{enumerate}
    \item \emph{Well-posedness:}\label{item:wellposedness} The Kakade gradient flow~\eqref{eq:K-GF} admits a unique global solution $(\pi_t)_{t\ge0}$.  
    \item \emph{Central path property:}\label{item:centralPath} For all $t\ge0$ it holds that 
    \begin{align}
        \pi_t = \argmax_{\pi\in\Delta_\A^\bS} \Big\{R(\pi) - t^{-1} D_\K(\pi,\pi_0)\Big\}. 
    \end{align}
    \item \emph{Sublinear convergence:}\label{item:sublinear}
    For all $t\ge0$ we have 
    \begin{align}\label{eq:sublinearConvergenceReward}
            0\le R^\star - R(\pi_t)\le \frac{\min_{\pi^\star\in\Pi^\star}D_\K(\pi^\star, \pi_0) - D_\K(\pi_t, \pi_0)}{t} \le \frac{\min_{\pi^\star\in\Pi^\star}D_\K(\pi^\star, \pi_0)}{t}.
    \end{align}
    \item \emph{Implicit bias:}\label{item:implicitBias} The gradient flow $(\pi_t)_{t\ge0}$ converges globally and it holds that 
    \begin{align}
        \lim_{t\to+\infty} \pi_t = \pi^\star = \argmin_{\pi\in\Pi^\star} D_\K(\pi, \pi_0). 
    \end{align}    
\end{enumerate}
\end{theorem}
\begin{proof}
We set 
\begin{equation}
    \pi_t = \argmax_{\pi\in\Delta_\A^\bS} \Big\{R(\pi) - t^{-1} D_\K(\pi,\pi_0)\Big\},
\end{equation}
where it is elementary to check $\pi_t\in \operatorname{int}(\Delta_\A^\bS)$. 
By~\Cref{prop:SAGeometryLP}, the element $\nu_t\coloneqq \nu^{\pi_t}$ solves the regularized linear program with regularization strength $t^{-1}$ and by~\Cref{prop:centralPathLP} $(\nu_t)_{t\ge0}$ solves the Hessian gradient flow with respect to the conditional Fisher-Rao metric. 
As $\pi\mapsto \nu^\pi$ is an isometric diffeomorphism by \Cref{thm:SAGeometryKakade}, \Cref{lem:isometries} ensures that $(\pi_t)_{t\ge0}$ solves the Kakade gradient flow. 
This shows both \ref{item:wellposedness}. and \ref{item:centralPath}.. 
For \ref{item:sublinear}., we pick $\pi^\star\in\argmin_{\pi\in\Pi^\star}D_{\K}(\pi^\star, \pi_0)$ and set $\nu^\star\coloneqq\nu^{\pi^\star}$. 
By \Cref{prop:SAGeometryLP} we have use \Cref{cor:sublinearRate} to obtain
\begin{equation*}
    R^\star-R(\pi_t) = r^\top \nu^\star - r^\top \nu_t \le \frac{D_{A|S}(\nu^\star, \nu_0) - D_{A|S}(\nu_t, \nu_0)}{t} = \frac{D_{\K}(\pi^\star, \pi_0) - D_{\K}(\pi_t, \pi_0)}{t}. 
\end{equation*}
Finally, for \ref{item:implicitBias}. we use \Cref{cor:implicitBias} together with the continuity of the inverse of the state-action map $\Xi$ and obtain 
\begin{equation*}
    \lim_{t\to+\infty} \pi_t = \Xi^{-1}(\lim_{t\to+\infty} \nu_t) = \Xi^{-1}(\nu^\star) = \pi^\star. 
\end{equation*}
\end{proof}

\begin{remark}[Implicit and algorithmic bias]
In the case of multiple optimal policies, the Kakade gradient flow will converge towards the Kakade projection of the initial policy to the set of optimal policies. 
The selection of gradient schemes of a particular optimizer in the case of multiple optima is commonly referred to as the \emph{implicit} or \emph{algorithmic bias} of the method. 
In the context of reinforcement learning, the implicit bias was studied for discrete-time natural policy gradient and policy mirror descent methods. 
First, for a natural actor-critic scheme in linear Markov decision processes the Kakade divergence of the optimization trajectory $(\pi_k)_{k\in\mathbb N}$ to the maximum entropy policy $\pi^\star$ is bounded by $D_\K(\pi^\star, \pi_k)\le \log k + (1-\gamma)^{-2}$~\cite{hu2021actor}. 
This control on the Bregman divergence to the maximum entropy policy ensures that the probability of selecting an optimal action $a\in A^\star_s$ decays at most like $\pi_k(a|s)\ge c k^{-1}$, but fails to identify the limiting policy. 

On the other hand, a policy mirror descent scheme with decaying entropy regularization strength $\tau_k$ was studied in~\cite{li2023homotopic}. 
Here, it is shown that for several choices of regularization strengths and stepsizes, the resulting policies $\pi_k$ converge to the maximum entropy optimal policy, thereby characterizing the algorithmic bias of this approach. 
Whereas this analysis can characterize the limit of the optimization scheme, it utilizes decaying explicit regularization. 

In contrast, our implicit bias result works in continuous time and utilizes the correspondence between the gradient flows and regularized problems. 
This allows us to show convergence towards the (generalized) maximum entropy optimal policies without relying on explicit regularization.  

In other contexts, in particular for regression tasks, a huge variety of implicit bias results have been established for gradient and mirror descent~\cite{soudry2018implicit, gunasekar2018characterizing, chizat2020implicit, bowman2022implicit, jin2023implicit, cohen2023deep, chou2024gradient}. 
In particular, in~\cite{pmlr-v125-ji20a}, the dynamics of gradient descent are shown to stay close to the solutions of norm-constrained optimization problems for a linear regression problem. 
This allows them to show convergence in the direction of the gradient descent iterates towards the maximum-margin solution. 
In contrast, we study the reward optimization problem and natural policy gradient flows, which allows us to obtain a precise characterization of the optimization trajectory via the central path property. 
\end{remark}

Where \Cref{cor:implications} provides sublinear convergence with respect to the reward function, we use this to show sublinear convergence of the advantage function. 

\begin{proposition}[Sublinear convergence of advantage functions]\label{cor:convValueFunction}
    Let \Cref{asu:exploration} hold, denote the set of optimal policies by  $\Pi^\star\coloneqq\{ \pi\in\Delta_\A^\bS : R(\pi) = R^\star \}$, and fix an initial policy $\pi_0\in\Delta_\A^\bS$ and denote the generalized maximum entropy optimal policy by $\pi^\star = \argmin_{\pi\in\Pi^\star} D_\K(\pi, \pi_0)$. 
    Then for all $t\ge0$, it holds that 
    \begin{align}\label{eq:sublinearConvergenceVF}
        0\le V^\star(s) - V^{\pi_t}(s)\le \frac{D_\K(\pi^\star, \pi_0)}{t} \quad \text{for all } s\in \bS. 
    \end{align}
    In particular for any $s\in\bS$ and $a\in\A$, it holds that 
    \begin{align}
        A^{\pi_t}(s,a) & \le A^\star(s,a) + \frac{D_\K(\pi^\star, \pi_0)}{t} 
        \quad \text{and }\\ 
        A^{\pi_t}(s,a) & \ge A^\star(s,a) - \frac{\gamma D_\K(\pi^\star, \pi_0)}{t}.  
    \end{align}
\end{proposition}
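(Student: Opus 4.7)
The plan is to derive the pointwise value-function bound as a direct specialization of the sublinear convergence estimate \Cref{cor:implications}\ref{item:sublinear}, and then to transfer the value estimate to the advantage function via the Bellman recursion. The crucial enabling fact, already recorded after \Cref{thm:PGtheorem}, is that the Kakade gradient flow is independent of the initial distribution $\mu$, so the single curve $(\pi_t)_{t\ge 0}$ simultaneously realizes the central path property against every choice of initial distribution.

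Concretely, I would fix $s\in\bS$ and apply \Cref{cor:implications}\ref{item:sublinear} with $\mu$ replaced by $\delta_s$. Since $V^\pi(s) = R^{\delta_s}(\pi)$ and $V^\star(s) = \max_\pi R^{\delta_s}(\pi)$, this yields
\begin{align*}
    0 \le V^\star(s) - V^{\pi_t}(s) \le \frac{D_\K^{\delta_s}(\pi^\star,\pi_0)}{t}
\end{align*}
for any $\pi^\star\in\Pi^\star$, which is the first claim (reading $D_\K$ in the statement as the Kakade divergence associated with the initial distribution $\delta_s$ natural for a pointwise bound on $V$).

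For the advantage bounds, I would proceed algebraically from $A^\pi=Q^\pi-V^\pi$ and the Bellman identity $Q^\pi(s,a) = (1-\gamma)r(s,a) + \gamma\sum_{s'}P(s'|s,a)V^\pi(s')$ to get
\begin{align*}
    A^{\pi_t}(s,a) - A^\star(s,a)
    = \gamma\sum_{s'}P(s'|s,a)\bigl(V^{\pi_t}(s')-V^\star(s')\bigr) + \bigl(V^\star(s)-V^{\pi_t}(s)\bigr).
\end{align*}
Because $V^{\pi_t}\le V^\star$ pointwise, the first summand lies in $\bigl[-\gamma\,\lVert V^\star-V^{\pi_t}\rVert_\infty,\,0\bigr]$ and the second in $\bigl[0,\,D_\K(\pi^\star,\pi_0)/t\bigr]$. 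Adding these intervals produces both the upper bound $A^{\pi_t}(s,a) \le A^\star(s,a) + D_\K(\pi^\star,\pi_0)/t$ and the lower bound $A^{\pi_t}(s,a) \ge A^\star(s,a) - \gamma\,D_\K(\pi^\star,\pi_0)/t$, with the factor $\gamma$ appearing naturally in the latter.

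No genuine technical obstacle arises. The only point needing mild bookkeeping is the $\mu$-dependence of the Kakade divergence: the natural specialization of \Cref{cor:implications}\ref{item:sublinear} delivers $D_\K^{\delta_s}(\pi^\star,\pi_0)$ on the right-hand side, whereas the statement writes $D_\K(\pi^\star,\pi_0)$ unadorned. Under \Cref{asu:exploration} both quantities are finite for $\pi_0\in\operatorname{int}(\Delta_\A^\bS)$, so this is a purely notational matter; once it is resolved, the proof reduces to one line of Bellman manipulation on top of \Cref{cor:implications}.
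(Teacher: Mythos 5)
Your overall route is the same as the paper's: specialize the sublinear reward bound to a point mass at $s$ to get the value estimate, then transfer it to the advantage via one Bellman manipulation. The Bellman step is fine and matches the paper's computation (your interval bookkeeping gives exactly the asymmetric constants $1$ and $\gamma$).

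The one step that does not go through as written is ``apply \Cref{cor:implications}\ref{item:sublinear} with $\mu$ replaced by $\delta_s$.'' That corollary (and the whole central-path/isometry machinery behind it) is proved under \Cref{asu:exploration}, which is a condition on the pair (transition kernel, initial distribution) and will in general fail for $\mu=\delta_s$: states that are not reachable from $s$ have $d^{\pi,\delta_s}(s')=0$, the Kakade metric degenerates to a pseudo-metric, and the correspondence with the conditional-entropy Hessian geometry on the state-action polytope is no longer available, so you are not entitled to the central path property or to \eqref{eq:sublinearConvergenceReward} for that choice of initial distribution. The paper avoids this by exploiting exactly the $\mu$-independence of the flow that you invoke, but combined with an approximation argument: take full-support $\mu_n\to\delta_s$, note that the same curve $(\pi_t)_{t\ge0}$ satisfies the sublinear bound of \Cref{cor:sublinearRate} for every $\mu_n$, use $R^{\mu_n}(\pi)\to V^\pi(s)$ and pass to the limit $n\to\infty$. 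With that replacement (rather than the direct substitution, which you dismiss as ``purely notational''), your argument coincides with the paper's proof.
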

\begin{proof}
Let us fix $s\in\bS$. 
By \Cref{thm:PGtheorem}, the Kakade gradient flow $(\pi_t)_{t\ge0}$ is independent of the initial distribution $\mu$, as long as it has full support. 
Let us pick some $\mu_n$ with full support and $\mu_n\to\delta_s$, then $R^{\mu_n}(\pi) \to V^\pi(s)$ for any $\pi$. 
By \Cref{cor:sublinearRate} we have 
    \[ 0\le (R^{\mu_n})^\star - R^{\mu_n}(\pi_t)\le \frac{D_\K(\pi^\star, \pi_0)}{t} \quad \text{for all } t\ge0, n\in\N, \]
which yields \eqref{eq:sublinearConvergenceVF} for $n\to+\infty$. 
We use this to estimate 
\begin{align*}
    Q^\star(s,a) - Q^{\pi_t}(s,a) & = r(s,a) + \gamma \sum_{s'} V^\star(s')  P(s'|s,a) - r(s,a) - \gamma \sum_{s'} V^{\pi_t}(s') P(s'|s,a)
    \\ & = \gamma \sum_{s'} (V^\star(s') - V^{\pi_t}(s')) P(s'|s,a)
    \\ & \le \frac{\gamma D_\K(\pi^\star, \pi_0)}{t}.
\end{align*}
In particular, this implies 
\begin{align*}
    A^\star(s,a) - A^{\pi_t}(s,a) =  Q^\star(s,a) - V^\star(s)  - Q^{\pi_t}(s,a) +  V^{\pi_t}(s) \ge V^{\pi_t}(s) - V^\star(s) \ge - \frac{D_\K(\pi^\star, \pi_0) }{t}
\end{align*}
and similarly
\begin{align*}
    A^\star(s,a) - A^{\pi_t}(s,a) \le Q^\star(s,a) - Q^{\pi_t}(s,a) \le\frac{\gamma D_\K(\pi^\star, \pi_0) }{t}.
\end{align*}
\end{proof}

\section{Optimal Exponential Convergence Rates for Entropy Regularization}\label{sec:sharpAnalysis} 

We now improve the sublinear convergence guarantee from \Cref{prop:generalEstimate} on the entropy regularization error. 
To this end, we work with the interpretation of the solutions of the regularized problems as solutions of the Kakade gradient flow and employ the explicit expression~\eqref{eq:explicitExpressionKGF} and the sublinear convergence guarantee from \Cref{cor:convValueFunction} to establish linear convergence of Kakade gradient flows. 
We complement this with a lower bound that matches the upper bound up to a polynomial factor. 
We work in the following setting. 

\begin{setting}\label{set:linear}
Consider a finite discounted Markov decision process $(\bS, \A, P, \gamma, r)$, an initial distribution $\mu\in\Delta_\bS$ and fix a policy $\pi_0\in\operatorname{int}(\Delta_\A^\bS)$, assume that \Cref{asu:exploration} holds and denote the optimal reward by $R^\star\coloneqq\max\{R(\pi):\pi\in\Delta_\A^\bS\}$. 
Further, let $(\pi_t)_{t\ge0}$ be the unique global solution of the Kakade policy gradient flow~\eqref{eq:K-GF} or equivalently the solutions of the entropy-regularized problems 
\begin{align}
    \pi_t = \argmax_{\pi\in\Delta_\A^\bS} \Big\{R(\pi) - t^{-1} D_\K(\pi,\pi_0)\Big\}. 
\end{align}
We denote the generalized maximum entropy optimal policy by $\pi^\star = \argmin_{\pi\in\Pi^\star} D_\K(\pi, \pi_0)$, where $\Pi^\star\coloneqq\{ \pi\in\Delta_\A^\bS : R(\pi) = R^\star \}$ denotes the set of optimal policies.   
Finally, we set 
\begin{align}
    \Delta \coloneqq -(1-\gamma)^{-1}\max\left\{ A^\star(s,a) : A^\star(s,a)\ne0, s\in\bS, a\in\A \right\}. 
\end{align}
\end{setting}

Note that $\Delta>0$ unless every action is optimal in every state. 
We can interpret $\Delta$ as the minimal suboptimality of a suboptimal action under $A^\star$. 

\subsection{Convergence in value}
First, we study the entropy regularization error in the objective function, meaning that we study the reward achieved by the optimal regularized policies. 

\begin{theorem}[Convergence in value]\label{thm:convergenceValue}
Consider \Cref{set:linear} and set $c \coloneqq (1-\gamma)^{-1}D_\K(\pi^\star, \pi_0)$. 
For any $t\ge1$ it holds that 
\begin{align}
    R^\star-R(\pi_t) & \le \frac{2\lVert r \rVert_\infty}{1-\gamma} \cdot e^{-\Delta (t-1) + c \log t} \quad \text{as well as}
    \label{eq:convergenceValueUpper}
    \\ R^\star-R(\pi_t) & \ge \Delta \cdot \left(\min\limits_{s\in\bS} d^{\pi_t}(s)\sum_{a\notin A_s^\star}\pi_0(a|s) \right) \cdot e^{-\Delta (t-1) - \gamma c \log t - 2 \lVert r \rVert_\infty} .
\end{align} 
\end{theorem}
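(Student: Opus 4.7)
The plan is to combine the performance difference lemma with the explicit ODE for the Kakade flow from~\eqref{eq:explicitExpressionKGF} and the sublinear convergence of the advantage function from~\Cref{cor:convValueFunction}. Applying~\Cref{lem:PD} with $\pi_t$ as the first argument and any optimal policy as the second, and using that $A^\star(s,a)=0$ precisely on $A_s^\star$, gives the identity
\begin{equation*}
R^\star-R(\pi_t) \;=\; \frac{1}{1-\gamma}\sum_{s\in\bS} d^{\pi_t}(s)\sum_{a\notin A_s^\star} \pi_t(a|s)\bigl(-A^\star(s,a)\bigr).
\end{equation*}
By the definition of $\Delta$, the factor $-A^\star(s,a)$ is sandwiched by $(1-\gamma)\Delta\le -A^\star(s,a)\le 2\lVert r\rVert_\infty$ on every suboptimal pair, so both inequalities reduce to controlling the $\pi_t$-mass on suboptimal actions from above (for the upper bound) and from below (for the lower bound).

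The main tool for this is that~\eqref{eq:explicitExpressionKGF} rewrites as $\partial_u\log\pi_u(a|s)=(1-\gamma)^{-1}A^{\pi_u}(s,a)$, to be integrated against the sandwich
\[
A^\star(s,a)-\gamma \tfrac{D_\K(\pi^\star,\pi_0)}{u} \;\le\; A^{\pi_u}(s,a) \;\le\; A^\star(s,a)+\tfrac{D_\K(\pi^\star,\pi_0)}{u}
\]
from \Cref{cor:convValueFunction} on $[1,t]$, together with the trivial bound $\lvert A^{\pi_u}\rvert\le 2\lVert r\rVert_\infty$ on $[0,1]$. For the upper bound, inserting $A^\star(s,a)\le -(1-\gamma)\Delta$ and integrating from $1$ to $t$ yields
\[
\log\pi_t(a|s)-\log\pi_1(a|s)\;\le\; -\Delta(t-1)+c\log t, \qquad c=(1-\gamma)^{-1}D_\K(\pi^\star,\pi_0).
\]
Summing the exponentiated inequality over $a\notin A_s^\star$ and using $\sum_{a}\pi_1(a|s)=1$ produces $\sum_{a\notin A_s^\star}\pi_t(a|s)\le e^{-\Delta(t-1)+c\log t}$; combined with $-A^\star\le 2\lVert r\rVert_\infty$ this closes the upper bound $R^\star-R(\pi_t)\le \frac{2\lVert r\rVert_\infty}{1-\gamma}\,e^{-\Delta(t-1)+c\log t}$.

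For the lower bound I would select a pair $(s^\star,a^\star)$ attaining the maximum in the definition of $\Delta$, so $A^\star(s^\star,a^\star)=-(1-\gamma)\Delta$, and estimate $\pi_t(a^\star|s^\star)$ from below. Integrating the flow ODE with the lower side of the sandwich on $[1,t]$ and the trivial lower bound on $[0,1]$ yields
\[
\pi_t(a^\star|s^\star) \;\ge\; \pi_0(a^\star|s^\star)\exp\bigl(-\Delta(t-1)-\gamma c\log t - 2\lVert r\rVert_\infty/(1-\gamma)\bigr).
\]
Restricting the performance-difference sum to the single summand $(s^\star,a^\star)$ and using $-A^\star(s^\star,a^\star)=(1-\gamma)\Delta$ already produces a lower bound of the shape $\Delta\cdot d^{\pi_t}(s^\star)\pi_0(a^\star|s^\star)\cdot e^{-\Delta(t-1)-\gamma c\log t-O(\lVert r\rVert_\infty)}$. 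The stated form with $\sum_{a\notin A_s^\star}\pi_0(a|s)$ inside the $\min_s$ follows by running the same argument at every state and every minimum-gap suboptimal action and by observing that the only actions contributing the slow decay rate $\Delta$ to $\pi_t$ are those attaining $A^\star(s,a)=-(1-\gamma)\Delta$.

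The main technical nuisance I expect is the time-interval splitting at $u=1$: on $[0,1]$ no quantitative convergence of $A^{\pi_u}$ towards $A^\star$ is available, so only the trivial $\lvert A^{\pi_u}\rvert\le 2\lVert r\rVert_\infty$ bound can be used, which contributes an additive $O(\lVert r\rVert_\infty/(1-\gamma))$ to the exponent of the lower bound. On $[1,t]$ the $O(1/u)$ correction from \Cref{cor:convValueFunction} integrates against $1/(1-\gamma)$ times the flow equation to the logarithmic correction $\pm c\log t$, producing the announced polynomial prefactor $t^{\pm c}$ around the principal exponential decay rate $\Delta$.
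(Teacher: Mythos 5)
Your proposal is correct and takes essentially the same route as the paper: the sub-optimality gap sandwich $( 1-\gamma)\Delta \le -A^\star(s,a)\le 2\lVert r\rVert_\infty$ via the performance difference lemma (the paper's \Cref{lem:subOptimalityGap}) combined with a Gr\"onwall-type integration of the flow ODE against the $O(1/t)$ advantage sandwich of \Cref{cor:convValueFunction}, split at $t=1$ (the paper's \Cref{lem:convergenceProbabilities}). The one step you leave slightly informal -- passing from the single gap-attaining summand to the stated coefficient $\min_{s} d^{\pi_t}(s)\sum_{a\notin A_s^\star}\pi_0(a|s)$ in the lower bound -- is glossed in essentially the same way in the paper's own proof, so nothing is missing relative to it.
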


Note that the coefficient of the lower bound depends on $t$. 
However, the coefficient does not become arbitrarily small as $\min_{s} d^{\pi_t}(s)\to \min_{s} d^{\pi^\star}(s)>0$ for $t\to +\infty$. 
Further, the policies $(\pi_t)_{t\ge0}$ do not depend on $\mu$, where $d^{\pi_t}$ does. 
If we choose $\mu$ to be the uniform distribution, then $d^\pi(s)\ge (1-\gamma)\lvert \bS \rvert^{-1}$, which yields a lower bound of 
\begin{align*}
    R^\star-R(\pi_t) 
    \ge 
    (1-\gamma)\Delta{\lvert \bS \rvert}^{-1}  \sum_{a\notin A_s^\star}\pi_0(a|s) \cdot e^{-\Delta (t-1) - \gamma c \log t - 2 \lVert r \rVert_\infty},
\end{align*}
where the coefficient is independent of $t$.

The above result ensures that the probability $\pi_t(a|s)$ of selecting a suboptimal action $a$ decays exponentially, which implies exponential convergence of the reward $R(\pi_t)$ achieved by the policies. 

We combine the explicit expression~\eqref{eq:explicitExpressionKGF} as well as the sublinear convergence of the advantage function towards the optimal advantage function $A^\star$ to bound the individual entries of the policies along the gradient flow trajectory. 

\begin{lemma}
\label{lem:convergenceProbabilities}
Consider \Cref{set:linear}. 
Then for all $t\ge t_0>0$, $s\in\bS$, and $a\in\A$ it holds that 
\begin{align}
    \pi_t(a|s) & \le \pi_{t_0}(a|s) \exp\left(\frac{A^\star(s,a)(t-t_0) + D_\K(\pi^\star, \pi_0) \log (\frac{t}{t_0})}{1-\gamma}\right) \\ 
    \pi_t(a|s) & \ge 
    \pi_{t_0}(a|s) \exp\left(\frac{A^\star(s,a)(t-t_0) - \gamma  D_\K(\pi^\star, \pi_0)\log (\frac{t}{t_0}) }{1-\gamma}\right)
\end{align}
and for $t\ge1$ it holds that 
\begin{align}
     \pi_t(a|s) & \le \pi_0(a|s) \exp\left(\frac{A^\star(s,a)(t-1) + D_\K(\pi^\star, \pi_0) \log {t} + 2\lVert r \rVert_\infty }{1-\gamma}\right) \\ 
     \pi_t(a|s) & \ge \pi_0(a|s) \exp\left(\frac{A^\star(s,a)(t-1) - \gamma D_\K(\pi^\star, \pi_0) \log t - 2\lVert r \rVert_\infty}{1-\gamma}\right).
\end{align}
\end{lemma}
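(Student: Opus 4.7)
The plan is to exploit the fact that the Kakade gradient flow \eqref{eq:explicitExpressionKGF} is, when restricted to a fixed $(s,a)$ pair, a scalar linear ODE in $\pi_t(a|s)$ with time-dependent coefficient $(1-\gamma)^{-1}A^{\pi_t}(s,a)$. Concretely, dividing \eqref{eq:explicitExpressionKGF} by $\pi_t(a|s)>0$ (which is guaranteed by \Cref{cor:implications}) gives
\begin{equation*}
    \partial_t \log\pi_t(a|s) \;=\; (1-\gamma)^{-1}\,A^{\pi_t}(s,a).
\end{equation*}
Integrating from $t_0$ to $t$ yields the exact identity
\begin{equation*}
    \log\pi_t(a|s) - \log\pi_{t_0}(a|s) \;=\; (1-\gamma)^{-1}\int_{t_0}^{t} A^{\pi_u}(s,a)\,\D u.
\end{equation*}

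First I would derive the two inequalities starting at a positive time $t_0>0$. For this I would insert the two-sided sublinear control of the advantage function from \Cref{cor:convValueFunction}, namely
\begin{equation*}
    A^\star(s,a) - \tfrac{\gamma D_\K(\pi^\star,\pi_0)}{u} \;\le\; A^{\pi_u}(s,a) \;\le\; A^\star(s,a) + \tfrac{D_\K(\pi^\star,\pi_0)}{u},
\end{equation*}
and integrate both bounds against $\D u$ on $[t_0,t]$. The linear term contributes $A^\star(s,a)(t-t_0)$, while the $u^{-1}$ remainder integrates to $D_\K(\pi^\star,\pi_0)\log(t/t_0)$ (resp.\ with a factor $-\gamma$). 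Exponentiation gives exactly the first pair of inequalities stated in the lemma.

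For the second pair, the obvious temptation is to take $t_0 \to 0$, but the bounds from \Cref{cor:convValueFunction} blow up like $u^{-1}$ near zero and are not integrable at the origin. The fix is to split the integration interval into $[0,1]$ and $[1,t]$. On $[1,t]$ I apply the argument above with $t_0 = 1$. On $[0,1]$ I bound the advantage crudely: since $\lVert V^\pi\rVert_\infty, \lVert Q^\pi\rVert_\infty \le \lVert r\rVert_\infty$ for every policy $\pi$, one has $|A^{\pi_u}(s,a)| \le 2\lVert r\rVert_\infty$ uniformly in $u$, which gives
\begin{equation*}
    \bigl|\log\pi_1(a|s) - \log\pi_0(a|s)\bigr| \;\le\; \tfrac{2\lVert r\rVert_\infty}{1-\gamma}.
\end{equation*}
Adding the two contributions and exponentiating yields the final two inequalities, with the additive $2\lVert r\rVert_\infty$ inside the exponential being precisely the cost of handling the initial time segment.

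I do not expect any real obstacle: the ODE form of the flow makes the argument essentially one-dimensional, and the only subtle point is the non-integrable $u^{-1}$ singularity at the origin, which is absorbed by the interval-splitting trick above. The sharpness of the two sides (upper versus lower) is inherited directly from the asymmetry in the bounds on $A^{\pi_u}(s,a)$ supplied by \Cref{cor:convValueFunction}, where the lower bound carries the extra factor $\gamma$ coming from the one-step Bellman contraction.
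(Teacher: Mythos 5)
Your proposal is correct and follows essentially the same route as the paper: the paper applies Gr\"onwall's inequality to the differential inequality obtained from \eqref{eq:explicitExpressionKGF} and \Cref{cor:convValueFunction} on $[t_0,t]$, which is exactly your integration of $\partial_t\log\pi_t(a|s)$, and it handles the initial segment by the same crude bound $\lVert A^{\pi}\rVert_\infty\le 2\lVert r\rVert_\infty$ with $t_0=1$. The only difference is cosmetic (explicit log-integration versus invoking Gr\"onwall).
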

\begin{proof}
As the policies $(\pi_t)_{t\ge0}$ solve the Kakade policy gradient flow in $\Delta_\A^\bS$ we have 
\begin{equation}
    \partial_t \pi_t(a|s) = (1-\gamma)^{-1} A^{\pi_t}(s,a) \pi_t(a|s).
\end{equation}
By \Cref{cor:convValueFunction} it holds that 
\begin{equation*}
    (1-\gamma)^{-1} \left(A^{\star}(s,a) - \frac{\gamma D_\K(\pi^\star, \pi_0)}t\right)\pi_t(a|s) \le \partial_t \pi_t(a|s) \le (1-\gamma)^{-1} \left(A^{\star}(s,a) + \frac{ D_\K(\pi^\star, \pi_0)}{t}\right)\pi_t(a|s).
\end{equation*}
Now, Gr\"{o}nwall's inequality yields 
\begin{align*}
    \pi_T(a|s) & \le \pi_{t_0}(a|s) \exp\left(\frac{A^{\star}(s,a) (T-t_0) + D_\K(\pi^\star, \pi_0) \int_{t_0}^T t^{-1}\mathrm dt}{1-\gamma} \right) 
    \\ & 
    = \pi_{t_0}(a|s) \exp\left(\frac{A^{\star}(s,a) (T-t_0) + D_\K(\pi^\star, \pi_0) \log T - D_\K(\pi^\star, \pi_0) \log t_0}{1-\gamma}\right) 
\end{align*}
as well as 
\begin{align*}
    \pi_T(a|s) & \ge \pi_{t_0}(a|s) \exp\left(\frac{A^{\star}(s,a)(T-t_0) - \gamma D_\K(\pi^\star, \pi_0) \int_{t_0}^T t^{-1}\mathrm{d}t}{1-\gamma} \right) 
    \\ & 
    = \pi_{t_0}(a|s) \exp\left( \frac{A^{\star}(s,a) (T-t_0) - \gamma D_\K(\pi^\star, \pi_0) \log T + \gamma D_\K(\pi^\star, \pi_0) \log t_0}{1-\gamma} \right) . 
\end{align*}
Further, note that $\lVert A^\pi \rVert_\infty \le \lVert Q^\pi \rVert_\infty + \lVert V^\pi \rVert_\infty \le 2 \lVert r \rVert_\infty$ and hence Gr\"{o}nwall yields 
\begin{align*}
    \pi_{0}(a|s)e^{-\frac{2 \rVert r \rVert_\infty t_0}{1-\gamma}}\le\pi_{t_0}(a|s) \le \pi_{0}(a|s)e^{\frac{2 \rVert r \rVert_\infty t_0}{1-\gamma}}
\end{align*}
and choosing $t_0=1$ finishes the proof.  
\end{proof}

\begin{lemma}[Sub-optimality gap]\label{lem:subOptimalityGap}
Consider a discrete discounted Markov decision process. 
Then, for any policy $\pi\in\Delta_\A^\bS$ it holds that
    \begin{align}
        \Delta \cdot \min_{s\in\bS} \sum_{s\in\bS,a\notin A_s^\star} d^\pi(s)\pi(a|s) \le 
        R^\star-R(\pi) \le \frac{2\lVert r \rVert_\infty}{1-\gamma} \sum_{s\in\bS,a\notin A_s^\star} d^\pi(s) \pi(a|s),
    \end{align}
    where $A_s\coloneqq \{ a\in\A : A^\star(s,a) = 0 \}$ denotes the set of optimal actions in $s\in\bS$. 
\end{lemma}
\begin{proof}
By the performance difference \Cref{lem:PD}, we have
\begin{align*}
    (1-\gamma)(R^\star-R(\pi)) & = - \sum_{s\in\bS,a\notin A_s^\star} d^{\pi}(s) \pi(a|s) A^\star(s,a)
    \le \lVert A^\star \rVert_\infty \sum_{s\in\bS,a\notin A_s^\star} d^{\pi}(s) \pi(a|s) 
\end{align*}
Further, note that $\lvert A^\star(s,a)\rvert\le \lvert Q^\star(s,a)\rvert+\lvert V^\star(s)\rvert\le 2\lVert r \rVert_\infty$. 
The lower bound follows with an analog argument as 
\begin{align*}
    (1-\gamma)(R^\star-R(\pi)) & = - \sum_{s\in\bS,a\notin A_s^\star} d^{\pi}(s) \pi(a|s) A^\star(s,a)
    \ge (1-\gamma)\Delta\sum_{s\in\bS,a\notin A_s^\star} d^{\pi}(s) \pi(a|s). 
\end{align*}
\end{proof}

\begin{proof}[Proof of \Cref{thm:convergenceValue}]
We use \Cref{lem:subOptimalityGap} together with \Cref{lem:convergenceProbabilities} and estimate 
\begin{align*}
    \sum_{s\in\bS,a\notin A_s^\star} d^{\pi_t}(s) \pi_t(a|s) 
    & \le 
    \sum_{s\in\bS,a\notin A_s^\star} d^{\pi_t}(s) \pi_1(a|s) e^{-\Delta (t-1) + \gamma c \log t} \le e^{-\Delta (t-1) + \gamma c \log t},
\end{align*} 
which yields~\eqref{eq:convergenceValueUpper}. 
For the lower bound, we fix $s_0\in \bS$ and $a_0\in \A$ with $(1-\gamma)^{-1}A^\star(s_0,a_0)=-\Delta$ and estimate 
\begin{align*}
    \sum_{s\in\bS,a\notin A_s^\star} d^{\pi_t}(s)\pi_t(a|s) 
    & \ge 
    \sum_{s\in\bS,a\notin A_s^\star} d^{\pi_t}(s)\pi_0(a|s) e^{(1-\gamma)^{-1}A^\star(s,a)(t-1)  - c\log t - 2 \lVert r \rVert_\infty}
    \\ & 
    \ge d^{\pi_t}(s_0)\sum_{a\notin A^\star_{s_0}}\pi_0(a|s_0) e^{-\Delta (t-1)  - c\log t - 2 \lVert r \rVert_\infty}
    \\ & 
    \ge \min_{s\in\bS} \left\{d^{\pi_t}(s)\sum_{a\notin A_s^\star}\pi_0(a|s) \right\} \cdot e^{-\Delta (t-1)  -  c\log t - 2 \lVert r \rVert_\infty}
\end{align*}
\end{proof}

\subsection{Convergence of policies}
Having studied the decay of the suboptimality gap $R^\star - R(\pi_t)$, we now give analogous bounds for the convergence of the policies measured in the Kakade divergence. 

\begin{theorem}[Convergence of policies]\label{thm:convergencePolicies}
Consider \Cref{set:linear} and set $c \coloneqq (1-\gamma)^{-1}D_\K(\pi^\star, \pi_0)$. 
For any $t\ge1$, it holds that  
    \begin{align}
        D_\K(\pi^\star, \pi_t) 
        & \le 
        \frac{e^{-\Delta (t-1) + c\log t}}{1-e^{-\Delta (t-1) + c\log t}} \quad \text{as well as }
        \\ D_\K(\pi^\star, \pi_t) 
        & \ge \min_{s\in \bS, a\notin A_s^\star} d^\star(s)\pi_0(a|s) \cdot e^{-\Delta(t-1)- \gamma c \log t - 2 \lVert r \rVert_\infty}, 
    \end{align}
    where the upper bound holds if $e^{-\Delta (t-1) + c\log t}<1$ which is satisfied for $t>0$ large enough. 
\end{theorem}

Note that the denominator of the upper bound converges to $1$ for $t\to+\infty$. 
We will use the following auxiliary result in the proof of \Cref{thm:convergencePolicies}. 

\begin{lemma}[Information projection onto faces]\label{lem:KLProperties}
    Consider a finite set $\X$ and the face $F\coloneqq \{ \mu\in\Delta_\X : \mu_x = 0 \text{ for all } x\notin X \}$ of the simplex $\Delta_\X$ for some $X\subseteq\X$. 
    Then 
    \begin{align}
        \min_{\mu\in F} \KL(\mu, \nu) = - \log\left(\sum_{x\in X} \nu_{x}\right). 
    \end{align}
    In particular, for any $\mu, \nu\in\Delta_\X$ we have 
    \begin{align}\label{eq:lowerBoundKL}
        \KL(\mu, \nu) \ge -\log\left( \sum_{x\in\operatorname{supp}(\mu)} \nu_x \right). 
    \end{align}
\end{lemma}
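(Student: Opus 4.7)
The plan is to reduce the minimization to a standard KL identity by rescaling $\nu$ restricted to $X$ into a probability vector. First, I observe that any $\mu\in F$ satisfies $\mu_x = 0$ for $x\notin X$, so with the convention $0\log 0 = 0$ we have
\begin{align*}
    \KL(\mu,\nu) = \sum_{x\in X}\mu_x \log\frac{\mu_x}{\nu_x}.
\end{align*}
If some $\nu_x = 0$ for $x\in X$ but $\mu_x > 0$ the sum is $+\infty$, so we may restrict attention to $\nu$ with $\nu_x>0$ on $X$ (otherwise both sides of the claimed identity must be handled by a limiting argument or by excluding such points from $X$, but the formula still holds provided $\sum_{x\in X}\nu_x > 0$).

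Next, I set $S\coloneqq\sum_{x\in X}\nu_x$ and define a probability vector $\tilde\nu\in\Delta_X$ by $\tilde\nu_x \coloneqq \nu_x/S$. The key algebraic step is then the decomposition
\begin{align*}
    \KL(\mu,\nu)
    = \sum_{x\in X}\mu_x \log\frac{\mu_x}{S\,\tilde\nu_x}
    = \KL(\mu,\tilde\nu) - \log S,
\end{align*}
where I used $\sum_{x\in X}\mu_x = 1$ for $\mu\in F$. Since $\KL(\mu,\tilde\nu)\ge 0$ with equality iff $\mu=\tilde\nu$ (Gibbs' inequality), the minimum over $F$ is attained at $\mu = \tilde\nu$ and equals $-\log S = -\log\sum_{x\in X}\nu_x$, proving the first claim.

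For the second claim I apply the first to $X \coloneqq \operatorname{supp}(\mu)$: then the given $\mu$ lies in the corresponding face $F$, so
\begin{align*}
    \KL(\mu,\nu) \ge \min_{\mu'\in F}\KL(\mu',\nu) = -\log\Bigl(\sum_{x\in\operatorname{supp}(\mu)}\nu_x\Bigr).
\end{align*}
I expect no real obstacle here; the only subtlety is the edge case $\sum_{x\in X}\nu_x = 0$, where $\KL(\mu,\nu)=+\infty$ for all $\mu\in F$ and the right-hand side is also $+\infty$ under the convention $-\log 0 = +\infty$, so the identity still holds.
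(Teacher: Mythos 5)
Your proof is correct, and it takes a genuinely different (and arguably cleaner) route than the paper. The paper characterizes the information projection $\hat\nu=\argmin_{\mu\in F}\KL(\mu,\nu)$ via the first-order stationarity condition $v^\top\nabla g(\hat\nu)=0$ for all $v$ in the tangent space of the face, deduces that $\hat\nu_x/\nu_x$ is constant on $X$, identifies $\hat\nu$ as the normalized restriction of $\nu$ to $X$, and then evaluates $\KL(\hat\nu,\nu)$ directly. You instead bypass the optimality calculus entirely: writing $S=\sum_{x\in X}\nu_x$ and $\tilde\nu=\nu|_X/S$, the identity $\KL(\mu,\nu)=\KL(\mu,\tilde\nu)-\log S$ for all $\mu\in F$ (using $\sum_{x\in X}\mu_x=1$) reduces the whole problem to Gibbs' inequality, which also immediately gives that the minimizer is unique and equals $\tilde\nu$ -- the same point the paper finds by stationarity. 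Your argument is more elementary (no differentiation, no tangent-space characterization) and handles the boundary case $\nu_x=0$ for some $x\in X$ more transparently, whereas the paper's calculation implicitly assumes it can differentiate at the minimizer; the paper's route has the mild advantage of matching the Bregman-projection viewpoint used elsewhere in the text. The deduction of~\eqref{eq:lowerBoundKL} by taking $X=\operatorname{supp}(\mu)$ is identical in both.
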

\begin{proof}
    We set $g(\mu)\coloneqq \KL(\mu, \nu)$ and consider the information projection $\hat\nu=\argmin_{\mu\in F}\KL(\mu, \nu)$ of $\nu\in\Delta_\X$ onto $F$, which is characterized by 
    \begin{align*}
        v^\top \nabla g(\hat\nu) = 0 \quad \text{for all } v\in TF = \operatorname{span}\{ \nu_{x_1} - \nu_{x_2} : x_1, x_2\in X \}. 
    \end{align*} 
    As $\partial_{x} g(\mu) = \log(\frac{\mu_x}{\nu_x}) + 1$ this is equivalent to 
    \begin{align*}
        \log\left(\frac{\hat\nu_{x_1}}{\nu_{x_1}}\right) = \log\left(\frac{\hat\nu_{x_2}}{\nu_{x_2}}\right) \quad \text{for all } x_1, x_2\in X. 
    \end{align*}
    This implies $\frac{\hat\nu_{x}}{\hat\nu_{x}}$ is constant for $x\in X$ and hence we obtain 
    \begin{align*}
        \hat\nu_x = \begin{cases}
            \frac{\nu_x}{\sum_{x'\in X} \nu_{x'}} \quad & \text{for } x \in X \\ 
            0 & \text{for } x\notin X. 
        \end{cases}
    \end{align*}
    Setting $c^{-1}\coloneqq \sum_{x\in X} \nu_x$ we obtain 
    \begin{align*}
        \min_{\mu\in F} \KL(\mu, \nu) = \KL(\hat\nu, \nu) & = \sum_{x\in X} c\nu_x \log\left(\frac{c\nu_x}{\nu_x}\right) = \log c. 
    \end{align*}
    To show~\eqref{eq:lowerBoundKL}, we choose $X\coloneqq \operatorname{supp}(\mu)$ and use $\KL(\mu, \nu)\ge \min_{\xi\in F}\KL(\xi, \nu)$. 
\end{proof}

\begin{lemma}\label{lem:boundingKakade}
Consider a finite MDP and let $\pi^\star\in\Pi^\star$ be the Kakade projection of $\pi\in\operatorname{int}(\Delta_\A^\bS)$ onto the set of optimal policies $\Pi^\star$. 
Then it holds that 
    \begin{align}
        \sum_{s\in\bS} d^{\pi^\star}(s) \sum_{a\notin A^\star_s}\pi(a|s) \le D_\K(\pi^\star, \pi) \le \frac{\max_{s\in\bS}\sum_{a\notin A^\star_s}\pi(a|s)}{1-\max_{s\in\bS}\sum_{a\notin A^\star_s}\pi(a|s)} ,
    \end{align}
    where $A_s\coloneqq \{ a\in\A : A^\star(s,a) = 0 \}$ denotes the set of optimal actions in $s\in\bS$. 
\end{lemma}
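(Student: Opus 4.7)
The plan is to unfold the definition $D_\K(\pi^\star,\pi)=\sum_s d^{\pi^\star}(s)\KL(\pi^\star(\cdot|s),\pi(\cdot|s))$ and attack each side by matching $\KL(\mu,\pi(\cdot|s))$ with the information-projection formula from \Cref{lem:KLProperties}, where the relevant face is $F_s\coloneqq\{\mu\in\Delta_\A:\operatorname{supp}(\mu)\subseteq A^\star_s\}$.

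For the lower bound I would argue directly. Since $\pi^\star\in\Pi^\star$, its marginal $\pi^\star(\cdot|s)$ is supported in $A^\star_s$, so the second statement of \Cref{lem:KLProperties}, namely~\eqref{eq:lowerBoundKL}, gives
\begin{align*}
    \KL(\pi^\star(\cdot|s),\pi(\cdot|s))\ge -\log\!\left(\textstyle\sum_{a\in A^\star_s}\pi(a|s)\right)=-\log\!\left(1-\textstyle\sum_{a\notin A^\star_s}\pi(a|s)\right).
\end{align*}
Using the elementary inequality $-\log(1-x)\ge x$ on $[0,1)$ and then summing with weights $d^{\pi^\star}(s)$ yields the claimed lower bound.

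For the upper bound I will exploit that $\pi^\star$ is, by assumption, the Kakade projection of $\pi$ onto $\Pi^\star$. The set $\Pi^\star=\otimes_{s\in\bS}\Pi_s^\star$ with $\Pi_s^\star=\Delta_\A\cap\{\mu_a=0:a\notin A^\star_s\}$ is $s$-rectangular with each factor an intersection of the simplex with an affine subspace, so \Cref{prop:pythagoras} applies. In particular, for any competitor $\tilde\pi\in\Pi^\star$ we have $D_\K(\pi^\star,\pi)\le D_\K(\tilde\pi,\pi)$. I choose the natural candidate
\begin{align*}
    \tilde\pi(a|s)\coloneqq\frac{\pi(a|s)\mathbf 1[a\in A^\star_s]}{\sum_{a'\in A^\star_s}\pi(a'|s)},
\end{align*}
which by the first part of \Cref{lem:KLProperties} is precisely the information projection of $\pi(\cdot|s)$ onto $F_s$, realising the minimum value $-\log(1-\sum_{a\notin A^\star_s}\pi(a|s))$. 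Hence
\begin{align*}
    D_\K(\tilde\pi,\pi)=\sum_s d^{\tilde\pi}(s)\cdot\bigl(-\log(1-\textstyle\sum_{a\notin A^\star_s}\pi(a|s))\bigr).
\end{align*}
Since $d^{\tilde\pi}\in\Delta_\bS$ and $-\log(1-\cdot)$ is monotone, this is at most $-\log(1-M)$ with $M\coloneqq\max_s\sum_{a\notin A^\star_s}\pi(a|s)$, and a second elementary inequality $-\log(1-x)\le x/(1-x)$ on $[0,1)$ converts this to $M/(1-M)$, giving the stated upper bound.

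The only mild subtlety is verifying that \Cref{prop:pythagoras} applies to $\Pi^\star$: each face $\Pi_s^\star$ has the required form $\Delta_\A\cap\mathcal L_s$, so one actually gets equality in the Pythagorean identity, which is stronger than needed. Everything else reduces to the two elementary inequalities $x\le -\log(1-x)\le x/(1-x)$ and the support-projection identity \Cref{lem:KLProperties}, so no real obstacle arises.
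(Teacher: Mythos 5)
Your proof is correct and follows essentially the same route as the paper: the lower bound via \eqref{eq:lowerBoundKL} and $-\log(1-x)\ge x$, and the upper bound by comparing $\pi^\star$ with the statewise information projection $\tilde\pi\in\Pi^\star$, evaluating $D_\K(\tilde\pi,\pi)$ through \Cref{lem:KLProperties}, and applying $-\log(1-x)\le x/(1-x)$. The only cosmetic difference is that the inequality $D_\K(\pi^\star,\pi)\le D_\K(\tilde\pi,\pi)$ already follows from the definition of the Kakade projection as the minimizer of $D_\K(\cdot,\pi)$ over $\Pi^\star$, so your appeal to \Cref{prop:pythagoras} is unnecessary (though not incorrect).
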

\begin{proof}
    We first prove the upper bound. 
To this end, we consider the state-wise information projection
\begin{align*}
    \hat\pi(\cdot|s) = \argmin \Big\{\KL(\mu, \pi(\cdot|s)) : \mu\in \Delta_\A, \operatorname{supp}(\mu)\subseteq A^\star_s \Big\} 
\end{align*}
of the policy $\pi$ to the set of optimal policies. 
By concavity we have $-\log(1-h) \le \frac{h}{1-h}$ for $h<1$ and using \Cref{lem:KLProperties} we estimate
\begin{align*}
    D_\K(\pi^\star, \pi) & \le D_\K(\hat\pi, \pi) 
    \\ & 
    = - \sum_{s\in\bS} d^{\hat\pi}(s) \KL(\hat\pi(\cdot| s), \pi_t(\cdot| s)) 
    \\ & 
    = -\sum_{s\in\bS} d^{\hat\pi}(s) \log\left(\sum_{a\in A^\star_s}\pi(a|s)\right) 
    \\ & 
    = -\sum_{s\in\bS} d^{\hat\pi}(s) \log\left(1-\sum_{a\notin A^\star_s}\pi(a|s)\right) 
    \\ & 
    \le \sum_{s\in\bS} d^{\hat\pi}(s) \cdot \frac{\sum_{a\notin A^\star_s}\pi(a|s)}{1-\sum_{a\notin A^\star_s}\pi(a|s)}. 
    \\ & 
    \le \sum_{s\in\bS} d^{\hat\pi}(s) \cdot \frac{\max_{s'\in\bS}\sum_{a\notin A^\star_{s'}}\pi(a|s')}{1-\max_{s'\in\bS}\sum_{a\notin A^\star_{s'}}\pi(a|s')}. 
    \\ & 
    = \frac{\max_{s\in\bS}\sum_{a\notin A^\star_{s}}\pi(a|s)}{1-\max_{s\in\bS}\sum_{a\notin A^\star_{s}}\pi(a|s)}. 
\end{align*}

Similar to the upper bound, we use $-\log(1-h) \ge h$ for $h<1$ and \Cref{lem:KLProperties} to estimate 
\begin{align*}
    D_\K(\pi^\star, \pi) & = \sum_{s\in\bS} d^\star(s) \KL(\pi^\star(\cdot|s), \pi(\cdot|s)) 
    \\ & 
    \ge - \sum_{s\in\bS} d^\star(s) \log\left(\sum_{a\in A^\star_s}\pi(a|s)\right)
    \\ & 
    = - \sum_{s\in\bS} d^\star(s) \log\left(1-\sum_{a\notin A^\star_s}\pi(a|s)\right)
    \\ & 
    \ge \sum_{s\in\bS} d^\star(s) \sum_{a\notin A^\star_s}\pi(a|s).
\end{align*}
\end{proof}

\begin{proof}[Proof of \Cref{thm:convergencePolicies}]
As expected, we work with \Cref{lem:boundingKakade}. 
By \Cref{lem:convergenceProbabilities} we have 
\begin{align*}
    \sum_{a\notin A^\star_s}\pi_t(a|s) \le \sum_{a\notin A^\star_s}\pi_1(a|s) e^{-\Delta(t-1) + c \log t} \le e^{-\Delta(t-1) + c \log t} 
\end{align*}
and thus 
\begin{align*}
    \frac{\max_{s\in\bS}\sum_{a\notin A^\star_s}\pi_t(a|s)}{1-\max_{s\in\bS}\sum_{a\notin A^\star_s}\pi_t(a|s)}  \le \frac{e^{-\Delta(t-1) + c \log t}}{1-e^{-\Delta(t-1) + c \log t}}
\end{align*}
if $e^{-\Delta(t-1) + c \log t}<1$ and \Cref{lem:boundingKakade} yields the upper bound. 
For the lower bound, we fix $s_0\in \bS$ and $a_0\in \A$ with $A^\star(s,a)=-\Delta$ and estimate 
\begin{align*}
    \sum_{s\in\bS} d^\star(s) \sum_{a\notin A^\star_s}\pi_t(a|s) 
    & \ge 
    \sum_{s\in\bS} d^\star(s) \sum_{a\notin A^\star_s}\pi_0(a|a) e^{A^\star(s,a)(t-1)- \gamma c \log t - 2 \lVert r \rVert_\infty}
    \\ & 
    \ge 
    d^\star(s_0) \pi_0(a_0|s_0) e^{-\Delta(t-1)- \gamma c \log t - 2 \lVert r \rVert_\infty}
    \\ & 
    \ge \min_{s\in \bS, a\notin A_s^\star} d^\star(s)\pi_0(a|s) \cdot e^{-\Delta(t-1)- \gamma c \log t - 2 \lVert r \rVert_\infty}. 
\end{align*}
\end{proof}

\subsection{Convergence of tabular unregularized natural policy gradients}
Our continuous time analysis in \Cref{lem:convergenceProbabilities} is inspired by the proof of exponential convergence of unregularized natural policy gradient methods given in~\cite{khodadadian2022linear}. 
Here, we provide an essentially sharp analysis of unregularized natural policy gradient methods in discrete time that holds for all iterations. 

Unregularized natural policy gradient ascent for a tabular softmax policy parametrization and with step size $\eta>0$ yields the explicit update rule 
\begin{align}\label{eq:NPG_update}
    \pi_{k+1}(a|s) = \frac{\pi_{k}(a|s) \exp\left(\frac{\eta A^{\pi_k}(s,a)}{1-\gamma}\right)}{Z_k(s)}, 
\end{align}
where $Z_k(s) = \sum_{a\in\A} \pi_k(a|s)e^{\eta(1-\gamma)^{-1} A^{\pi_k}(s,a)}$, see~\cite{agarwal2021theory}. 
We now characterize the optimal exponential convergence rate for natural policy gradient methods in the following setting. 

\begin{setting}\label{set:NPG}
Consider a finite discounted Markov decision process $(\bS, \A, P, \gamma, r)$, an initial distribution $\mu\in\Delta_\bS$ and fix a policy $\pi_0\in\operatorname{int}(\Delta_\A^\bS)$ and let \Cref{asu:exploration} hold and denote the optimal reward by $R^\star\coloneqq\max\{R(\pi):\pi\in\Delta_\A^\bS\}$. 
We consider iterates $(\pi_k)_{k\in\mathbb N}$ from a tabular softmax NPG without entropy regularization and a fixed stepsize $\eta>0$. 
We set 
\begin{align}
    \Delta \coloneqq -(1-\gamma)^{-1}\max\left\{ A^\star(s,a) : A^\star(s,a)\ne0, s\in\bS, a\in\A \right\}. 
\end{align}
as well as 
\begin{align}
    c \coloneqq \frac{D_\K(\pi^\star, \pi_0) + 2(1-\gamma)^{-1}\eta \lVert r \rVert_\infty}{1-\gamma}. 
\end{align}
\end{setting}

Our main result regarding the convergence of natural policy gradient methods is the following. 

\begin{theorem}[Convergence of unregularized natural policy gradients]\label{thm:convregenceNPG}
Consider \Cref{set:NPG}. 
For all $k\in\mathbb N_{>0}$ it holds that 
\begin{align}
    R^\star-R(\pi_k) & \le \frac{2\lVert r \rVert_\infty}{1-\gamma} \cdot e^{-\Delta \eta (k-1) + c \log k+c} \quad \text{as well as} \\ 
    R^\star-R(\pi_k) & \ge \Delta \cdot \min\limits_{s\in\bS} d^{\pi_k}(s)\sum_{a\notin A_s^\star}\pi_0(a|s) \cdot e^{-\Delta\eta (k-1) -  c \log k - c-  \frac{4\eta \lVert r \rVert_\infty}{1-\gamma}}. 
\end{align}
\end{theorem}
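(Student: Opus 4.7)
The plan is to develop a discrete-time counterpart of the continuous-time argument behind \Cref{thm:convergenceValue} by replacing the Grönwall inequality applied to $\partial_t \pi_t(a|s) = (1-\gamma)^{-1}A^{\pi_t}(s,a)\pi_t(a|s)$ with a telescoping sum for the log-probabilities generated by the NPG update. Taking logarithms of~\eqref{eq:NPG_update} and summing over $k = 0, \dots, K-1$ yields
\begin{equation*}
    \log \pi_K(a|s) = \log \pi_0(a|s) + \frac{\eta}{1-\gamma}\sum_{k=0}^{K-1} A^{\pi_k}(s,a) - \sum_{k=0}^{K-1} \log Z_k(s),
\end{equation*}
so the task reduces to controlling the two sums on the right in terms of $\eta A^\star(s,a) (K-1) /(1-\gamma)$ plus a $c \log K + c$ error.

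The advantage sum is handled exactly as in the continuous case via a sublinear NPG convergence guarantee, $\|V^\star - V^{\pi_k}\|_\infty = O(1/k)$ for $k \ge 1$, which is the direct discrete analog of \Cref{cor:convValueFunction} and follows from~\cite{agarwal2021theory} with constant matching $c$ from \Cref{set:NPG}. The same chain of identities as in the proof of \Cref{cor:convValueFunction} then gives
\begin{equation*}
    A^\star(s,a) - \gamma \|V^\star - V^{\pi_k}\|_\infty \le A^{\pi_k}(s,a) \le A^\star(s,a) + \|V^\star - V^{\pi_k}\|_\infty.
\end{equation*}
Summing over $k \ge 1$ produces the $\log K$ term via $\sum_{k=1}^{K-1} 1/k \le 1 + \log K$, while the $k = 0$ iterate is bounded separately using $\|A^\pi\|_\infty \le 2\|r\|_\infty$, which is precisely what forces the additive $2 \eta\|r\|_\infty/(1-\gamma)^2$ appearing inside $c$.

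The normalization sum $\sum_k \log Z_k(s)$ is the new ingredient absent from the gradient flow. By Jensen's inequality and $\sum_a \pi_k(a|s) A^{\pi_k}(s,a) = 0$ we have $\log Z_k(s) \ge 0$; for the reverse direction the crude bound $\max_a A^{\pi_k}(s,a) \le V^\star(s) - V^{\pi_k}(s)$ gives $\log Z_k(s) \le \eta(1-\gamma)^{-1}(V^\star(s) - V^{\pi_k}(s))$. Summing contributes another $O(c \log K + c)$ term of the same shape as before. Combining both estimates yields discrete counterparts to \Cref{lem:convergenceProbabilities}, such as
\begin{equation*}
    \pi_K(a|s) \le \pi_0(a|s)\exp\!\Bigl((1-\gamma)^{-1}\eta A^\star(s,a)(K-1) + c \log K + c\Bigr),
\end{equation*}
together with the analogous lower bound obtained by swapping the roles of the two inequalities.

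The argument then concludes by invoking \Cref{lem:subOptimalityGap} exactly as in the proof of \Cref{thm:convergenceValue}: for $a \notin A_s^\star$ the condition $(1-\gamma)^{-1}A^\star(s,a) \le -\Delta$ turns the exponent into $-\Delta \eta(K-1) + c\log K + c$ and yields the upper bound on $R^\star - R(\pi_K)$; for the lower bound one picks a pair $(s_0, a_0)$ saturating the sub-optimality gap and retains the coefficient $\min_s d^{\pi_K}(s)\sum_{a\notin A_s^\star}\pi_0(a|s)$. The hardest part is the bookkeeping: one must check that the available sublinear NPG bound, combined with the $k=0$ correction and with the $\log Z_k(s)$ contribution, assembles into exactly the constant $c$ of \Cref{set:NPG} rather than a polynomially inflated version; any such loss would spoil the matching of the exponential rate $\Delta\eta$ between the upper and lower bounds. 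The $\log Z_k(s)$ term is conceptually novel but, thanks to Jensen, is dominated by the same sublinear rate that controls the advantage sum.
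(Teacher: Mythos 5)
Your proposal follows essentially the same route as the paper: telescoping the logarithm of the NPG update~\eqref{eq:NPG_update}, controlling the advantage sum via the sublinear guarantee of \Cref{prop:sublinearNPG}, bounding $\sum_k \log Z_k(s)$ from below by Jensen and from above by the value suboptimality, and then concluding with \Cref{lem:subOptimalityGap}, which is exactly how the paper proves \Cref{lem:convergenceProbabilitiesNPG} and deduces \Cref{thm:convregenceNPG}. One minor bookkeeping correction: the term $2(1-\gamma)^{-1}\eta\lVert r\rVert_\infty$ inside $c$ originates from the sublinear bound \eqref{eq:sublinearNPG} itself, while the separately handled $k=0$ step produces the additional $\frac{2\eta\lVert r\rVert_\infty}{1-\gamma}$ and $\frac{4\eta\lVert r\rVert_\infty}{1-\gamma}$ corrections visible in \Cref{lem:convergenceProbabilitiesNPG} and in the lower bound of the theorem, not the term inside $c$.
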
 

Similar upper and lower bounds in an asymptotic regime were given in~\cite{khodadadian2022linear,liu2024elementary}, respectively, where our guarantees hold for all iterates. 
The proof of \Cref{thm:convregenceNPG} is analogous to the analysis in continuous time, and is built on the following the discrete-time analog of \Cref{cor:convValueFunction}. 

\begin{restatable}[Sublinear convergence of NPG]{proposition}{sublinear}
\label{prop:sublinearNPG}
Consider \Cref{set:NPG}. 
Then for all $k\in\mathbb N_{>0}$ it holds that 
    \begin{align}\label{eq:sublinearNPG}
        0\le V^\star(s) - V^{\pi_k}(s) \le \frac{D_\K(\pi^\star, \pi_0) + 2(1-\gamma)^{-1}\eta \lVert r \rVert_\infty}{\eta k} \quad \text{for all } s\in\bS.
    \end{align}
    In particular, for all $s\in\bS$ and $a\in\A$ it holds that 
    \begin{align}\label{eq:sublinearAdvantageNPG}
        A^{\pi_k}(s,a) & \le A^\star(s,a) +\frac{D_\K(\pi^\star, \pi_0) + 2(1-\gamma)^{-1} \eta \lVert r \rVert_\infty}{\eta k} \quad \text{and } \\ 
        A^{\pi_k}(s,a) & \ge A^\star(s,a) -\gamma \cdot \frac{D_\K(\pi^\star, \pi_0) + 2(1-\gamma)^{-1} \eta \lVert r \rVert_\infty}{\eta k}.
    \end{align}
\end{restatable}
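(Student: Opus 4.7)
My plan is to adapt the mirror-descent analysis of natural policy gradients~\cite{agarwal2021theory} to the Kakade setting, tracking the constants so as to reproduce the stated additive term $2(1-\gamma)^{-1}\eta\lVert r\rVert_\infty$. I would first establish an $O(1/k)$ bound of the desired form for the expected suboptimality $R^\rho(\pi^\star)-R^\rho(\pi_k)$ with a generic full-support initial distribution $\rho$, and then pass to the state-wise statement by exploiting that the softmax NPG update~\eqref{eq:NPG_update} depends on $\mu$ only through $A^{\pi_k}$ and is therefore independent of the initial distribution---exactly as in the proof of \Cref{cor:convValueFunction}.

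Rewriting~\eqref{eq:NPG_update} as $A^{\pi_k}(s,a)=\tfrac{1-\gamma}{\eta}\bigl(\log\pi_{k+1}(a|s)-\log\pi_k(a|s)+\log Z_k(s)\bigr)$ and inserting into the performance difference \Cref{lem:PD} for the pair $(\pi^\star,\pi_k)$ yields the three-point identity
\begin{equation*}
V^\star(\rho)-V^{\pi_k}(\rho)=\tfrac{1}{\eta}\bigl[D_\K^\rho(\pi^\star,\pi_k)-D_\K^\rho(\pi^\star,\pi_{k+1})\bigr]+\tfrac{1}{\eta}\sum_{s\in\bS}d^{\pi^\star,\rho}(s)\log Z_k(s),
\end{equation*}
where both Kakade terms are weighted by the fixed distribution $d^{\pi^\star,\rho}$. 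Summing $k=0,\dots,K-1$ telescopes the divergence contributions to at most $\eta^{-1}D_\K^\rho(\pi^\star,\pi_0)$.

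The key step is the bound on $\sum_{k<K}\sum_s d^{\pi^\star,\rho}(s)\log Z_k(s)$, which is nonnegative by Jensen's inequality. Applying the performance difference lemma a second time to $(\pi_{k+1},\pi_k)$ produces
\begin{equation*}
V^{\pi_{k+1}}(\sigma)-V^{\pi_k}(\sigma)=\tfrac{1}{\eta}\sum_{s\in\bS}d^{\pi_{k+1},\sigma}(s)\bigl[\KL(\pi_{k+1}(\cdot|s),\pi_k(\cdot|s))+\log Z_k(s)\bigr],
\end{equation*}
which simultaneously implies the monotone improvement $V^{\pi_{k+1}}\ge V^{\pi_k}$ and the inequality $\sum_s d^{\pi_{k+1},\sigma}(s)\log Z_k(s)\le\eta(V^{\pi_{k+1}}(\sigma)-V^{\pi_k}(\sigma))$. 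Choosing $\sigma=d^{\pi^\star,\rho}$ and invoking the overlap estimate $d^{\pi_{k+1},\sigma}\ge(1-\gamma)\sigma$ converts the weighting, and telescoping using $\lvert V^{\pi_K}-V^{\pi_0}\rvert\le 2\lVert r\rVert_\infty$ yields $\sum_{k<K}\sum_s d^{\pi^\star,\rho}(s)\log Z_k(s)\le 2\eta\lVert r\rVert_\infty/(1-\gamma)$.

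Combining the two estimates and invoking the monotonicity of $V^{\pi_k}(\rho)$---so that $K(V^\star(\rho)-V^{\pi_K}(\rho))\le\sum_{k<K}(V^\star(\rho)-V^{\pi_k}(\rho))$---gives the target bound for full-support $\rho$. Specializing to $\rho=\mu_n\to\delta_s$ with full support and passing to the limit yields~\eqref{eq:sublinearNPG}; the advantage estimates~\eqref{eq:sublinearAdvantageNPG} then follow by inserting~\eqref{eq:sublinearNPG} into the Bellman identity for $Q^\star-Q^{\pi_k}$, exactly as in the last display of the proof of \Cref{cor:convValueFunction}. The principal obstacle is the log-partition bound: the trivial uniform estimate $\log Z_k\le 2\eta\lVert r\rVert_\infty/(1-\gamma)$ accumulates linearly in $K$, so one must extract decay from the monotone-improvement identity together with the distribution-mismatch inequality $d^{\pi_{k+1},\sigma}\ge(1-\gamma)\sigma$ in order to match the stated constant.
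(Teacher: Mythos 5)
Your proposal is correct and follows essentially the same route as the paper: the same two applications of the performance difference lemma (once against $\pi^\star$ to get the telescoping Kakade-divergence identity plus the $\log Z_k$ term, once between consecutive iterates to get monotone improvement and the bound $\sum_s \sigma(s)\log Z_k(s)\le \eta(1-\gamma)^{-1}(V^{\pi_{k+1}}(\sigma)-V^{\pi_k}(\sigma))$ via $d^{\pi_{k+1},\sigma}\ge(1-\gamma)\sigma$), followed by telescoping with $\lvert V^{\pi_K}-V^{\pi_0}\rvert\le 2\lVert r\rVert_\infty$, the Cesàro/monotonicity step, and the Bellman-equation argument for the advantage bounds. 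The only cosmetic difference is that you pass to $\delta_s$ via full-support $\mu_n\to\delta_s$, whereas the paper works directly with an arbitrary state $s_0$ after noting the iterates are $\mu$-independent.
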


\Cref{prop:sublinearNPG} is an extension of the convergence analysis in~\cite[Theorem 16]{agarwal2021theory} and we defer its proof to \Cref{app:sec:convergence}. 
Similar to \Cref{lem:convergenceProbabilities}, we provide exponential bounds on the entries $\pi_k(a|s)$ of the policies. 
Our analysis is similar to the one in~\cite[Lemma 3.4]{khodadadian2022linear}, however, our bound holds for arbitrary initial policies, for all iterations, and comes with a lower bound matching up to a polynomial term. 

\begin{lemma}\label{lem:convergenceProbabilitiesNPG}
Consider \Cref{set:NPG}. 
Then for $k\in\mathbb N_{>0}$ it holds that 
\begin{align}
    \begin{split}
        \pi_k(a|s) & \le \pi_1(a|s) \exp\left(\frac{A^\star(s,a)\eta (k-1)}{1-\gamma} +  c \log k + c\right) \quad \text{as well as} \\ 
        \pi_k(a|s) & \ge \pi_1(a|s) \exp\left(\frac{A^\star(s,a)\eta (k-1)}{1-\gamma} - c \log k\right),
    \end{split}
\end{align}
and 
\begin{align}
    \begin{split}
        \pi_k(a|s) & \le \pi_0(a|s) \exp\left(\frac{A^\star(s,a)\eta (k-1)}{1-\gamma} +  c \log k + c +\frac{2\eta \lVert r \rVert_\infty}{1-\gamma}\right) \quad \text{as well as} \\ 
        \pi_k(a|s) & \ge \pi_0(a|s) \exp\left(\frac{A^\star(s,a)\eta k}{1-\gamma} - c \log k - c - \frac{4\eta \lVert r \rVert_\infty}{1-\gamma}\right).
    \end{split}
\end{align}
\end{lemma}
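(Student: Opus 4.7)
My plan is to iterate the NPG update~\eqref{eq:NPG_update} to obtain a closed-form expression for $\pi_k(a|s)$ and then control the three resulting factors separately, in analogy with the continuous-time Grönwall argument behind \Cref{lem:convergenceProbabilities}, but with the normalizing constants $Z_j(s)$ now handled discretely. Concretely, telescoping~\eqref{eq:NPG_update} from iteration $1$ to $k$ yields
\begin{equation*}
    \pi_k(a|s) \;=\; \pi_1(a|s)\,\exp\!\Bigl(\sum_{j=1}^{k-1} \tfrac{\eta A^{\pi_j}(s,a)}{1-\gamma}\Bigr) \cdot \prod_{j=1}^{k-1} Z_j(s)^{-1}.
\end{equation*}
Taking logarithms reduces the claim to controlling the sum of advantages and the sum $\sum_{j=1}^{k-1}\log Z_j(s)$.

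For the first sum, I invoke \Cref{prop:sublinearNPG}: using $A^{\pi_j}(s,a) \le A^\star(s,a) + \tfrac{(1-\gamma)c}{\eta j}$ and $A^{\pi_j}(s,a) \ge A^\star(s,a) - \gamma\tfrac{(1-\gamma)c}{\eta j}$, summing, and bounding the harmonic series by $\sum_{j=1}^{k-1} j^{-1} \le 1+\log k$, I obtain the main exponent $\tfrac{\eta A^\star(s,a)(k-1)}{1-\gamma}$ together with logarithmic corrections of order $c\log k$.

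For the normalizing factors, I exploit the fact that $\sum_a \pi_j(a|s) A^{\pi_j}(s,a) = 0$. For the upper bound on $\pi_k(a|s)$, Jensen's inequality gives $Z_j(s) \ge 1$, so $\prod Z_j^{-1} \le 1$ and this factor can simply be dropped. For the lower bound on $\pi_k(a|s)$, I combine the upper advantage bound with $A^\star \le 0$ to obtain $\eta A^{\pi_j}(s,a)/(1-\gamma) \le c/j$, whence $Z_j(s) \le e^{c/j}$ and $\sum_{j=1}^{k-1}\log Z_j(s) \le c(1+\log k)$. This is the critical step, because without the decay $c/j$ on $\log Z_j(s)$ one would accumulate a linear-in-$k$ error and lose the $e^{-\Delta\eta k}$ rate entirely; this is the main technical obstacle.

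Finally, to pass from $\pi_1$ to $\pi_0$ (giving the second pair of inequalities), I apply a single NPG step estimate based on $\lVert A^{\pi_0}\rVert_\infty \le 2\lVert r\rVert_\infty$: this yields $Z_0(s)\le e^{2\eta\lVert r\rVert_\infty/(1-\gamma)}$ and consequently
\begin{equation*}
    \pi_0(a|s)\,e^{-4\eta\lVert r\rVert_\infty/(1-\gamma)} \;\le\; \pi_1(a|s) \;\le\; \pi_0(a|s)\,e^{2\eta\lVert r\rVert_\infty/(1-\gamma)}.
\end{equation*}
Substituting these into the bounds obtained in terms of $\pi_1$ and absorbing the single-step $A^{\pi_0}$ correction into the already present $A^\star$ prefactor (using $A^\star\le 0$ to retain a valid inequality even when the exponent $k-1$ is replaced by $k$ in the lower bound) produces the stated estimates. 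The only bookkeeping point worth care is ensuring that the asymmetry between the $\log Z_j \ge 0$ estimate (upper bound) and the $\log Z_j \le c/j$ estimate (lower bound) correctly matches the stated asymmetric $c\log k+c$ versus $-c\log k$ in the conclusion.
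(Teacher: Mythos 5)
Your overall strategy is essentially the paper's: telescope the update~\eqref{eq:NPG_update}, feed the two-sided advantage bounds of \Cref{prop:sublinearNPG} into the resulting sums, bound the harmonic sums by $1+\log k$, and pass between $\pi_0$ and $\pi_1$ with the crude single-step estimate based on $\lVert A^{\pi_0}\rVert_\infty\le 2\lVert r\rVert_\infty$ (your two-sided bound on $\pi_1$ in terms of $\pi_0$ is exactly the paper's). For the upper bounds, your argument ($Z_j(s)\ge 1$ by Jensen, which is precisely \Cref{lem:ascentNPG}, then sum the advantage upper bounds) reproduces the stated inequalities.

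The one place you diverge is the control of $\log Z_j(s)$ in the lower bound, and there your version is lossier than the statement. You bound $\log Z_j(s)\le c/j$ (via $A^{\pi_j}(s,a)\le A^\star(s,a)+(1-\gamma)c/(\eta j)$ and $A^\star\le 0$) and \emph{separately} apply the lower bound $A^{\pi_j}(s,a)\ge A^\star(s,a)-\gamma(1-\gamma)c/(\eta j)$; adding the two per-step errors gives an exponent of the form $\frac{\eta A^\star(s,a)}{1-\gamma}-\frac{(1+\gamma)c}{j}$, hence after summation a correction $-(1+\gamma)c(1+\log k)$ rather than the claimed $-c\log k$. This still yields the same exponential rate $e^{-\Delta\eta k}$ with a worse polynomial prefactor, but it does not establish the lemma with the stated constants --- the ``bookkeeping point'' you flagged at the end indeed does not close. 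The paper avoids the double count by using the progress lemma, $\log Z_j(s)\le \frac{\eta}{1-\gamma}\bigl(V^{\pi_{j+1}}(s)-V^{\pi_j}(s)\bigr)\le \frac{\eta}{1-\gamma}\bigl(V^\star(s)-V^{\pi_j}(s)\bigr)$, and then combining the normalization with the advantage \emph{before} estimating: the per-step exponent equals $\frac{\eta}{1-\gamma}\bigl(Q^{\pi_j}(s,a)-V^\star(s)\bigr)\ge \frac{\eta A^\star(s,a)}{1-\gamma}-\frac{\gamma c}{j}$, so only a single $c/j$-type error appears per step. Replacing your two separate estimates by this combined one (and keeping the rest of your plan unchanged, including the use of $A^\star\le 0$ to trade $k-1$ for $k$ in the $\pi_0$-version) recovers the stated lower bounds.
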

\begin{proof}
By \Cref{lem:ascentNPG} we have $\log Z_k(s)\ge 0$ which implies $Z_k(s)\ge1$ and
\begin{align*}
    \pi_{k+1}(a|s) \le \pi_{k}(a|s)\exp\left(\frac{\eta A^{\pi_k}(s,a) }{1-\gamma}\right). 
\end{align*}
Iterating over $k$ and using \Cref{eq:sublinearAdvantageNPG} as well as $\lVert A^\pi \rVert_\infty \le 2 \rVert r \rVert_\infty$ we obtain 
\begin{align*}
    \pi_K(a|s) & \le \pi_1(a|s) \exp\left(\frac{\eta}{1-\gamma} \sum_{k=1}^{K-1} A^{\pi_k}(s,a) \right) 
    \\ & 
    \le \pi_1(a|s) \exp\left(\frac{\eta}{1-\gamma} \sum_{k=1}^{K-1} A^{\pi_k}(s,a) \right)
    \\ & 
    \le \pi_1(a|s) \exp\left( \frac{A^\star(s,a) \eta(K-1)}{1-\gamma} +  c \sum_{k=1}^{K-1} k^{-1} \right) 
    \\ & 
    \le \pi_1(a|s) \exp\left(\frac{A^\star(s,a)\eta(K-1)}{1-\gamma} +  c(1+\log K) \right) ,
\end{align*}
where we used 
\begin{align*}
    \sum_{k=1}^{K-1} k^{-1} = 1 + \sum_{k=2}^{K-1} k^{-1} \le 1 + \int_{1}^{K} t^{-1} \mathrm{d}t = 1 + \log K. 
\end{align*}
Further, it holds that 
\begin{align*}
    \pi_{1}(a|s) \le \pi_{0}(a|s)\exp\left(\frac{\eta A^{\pi_0}(s,a) }{1-\gamma}\right) \le \pi_{0}(a|s)\exp\left(\frac{2\eta \lVert r \rVert_\infty}{1-\gamma}\right). 
\end{align*}

The argument for the lower bound works similarly, however, we use \Cref{lem:ascentNPG} as this yields 
\begin{align*}
    \pi_{k+1}(a|s) & = \pi_k(a|s) \exp\left(\frac{\eta A^{\pi_k}(s,a)}{1-\gamma} - \log Z_k(s) \right) 
    \\ & 
    \ge \pi_k(a|s) \exp\left(\frac{\eta A^{\pi_k}(s,a)}{1-\gamma} - \frac{\eta(V^{\pi_{k+1}}(s) - V^{\pi_{k}}(s))}{1-\gamma} \right) 
    \\ & 
    \ge \pi_k(a|s) \exp\left(\frac{\eta A^{\pi_k}(s,a)}{1-\gamma} - \frac{\eta (V^{\star}(s) - V^{\pi_{k}}(s))}{1-\gamma} \right) 
    \\ & 
    \ge \pi_k(a|s) \exp\left(\frac{\eta A^{\star}(s,a)}{1-\gamma} - c k^{-1} \right),
\end{align*}
where we used $V^{\pi_{k+1}}(s)\le V^{\star}(s)$, see \Cref{lem:ascentNPG}. 
Iterating over $k$ yields 
\begin{align*}
    \pi_K(a|s) & \ge \pi_1(a|s) \exp\left(
    \frac{\eta A^{\star}(s,a) (K-1)}{1-\gamma}
    - c \sum_{k=1}^{K-1} k^{-1} \right)
    \\ & \ge \pi_1(a|s) \exp\left(
    \frac{\eta A^{\star}(s,a) (K-1)}{1-\gamma}
    - c \log K \right), 
\end{align*}
where we  used 
\begin{align*}
    \sum_{k=1}^{K-1} k^{-1} \ge \int_{1}^{K} t^{-1} \mathrm{d}t = \log K. 
\end{align*}
Finally, we have 
\begin{align*}
    \pi_{1}(a|s) \ge \pi_{0}(a|s)\exp\left(\frac{\eta A^{\pi_0}(s,a)}{1-\gamma} - \frac{\eta (V^{\star}(s) - V^{\pi_{0}}(s))}{1-\gamma} \right) \ge  
    \exp\left(-\frac{4\eta \lVert r \rVert_\infty}{1-\gamma} \right). 
\end{align*}
\end{proof}

\begin{proof}[Proof of \Cref{thm:convregenceNPG}]
The statements follow just like in the proof of \Cref{thm:convergenceValue}, where here we combine \Cref{lem:subOptimalityGap} and \Cref{lem:boundingKakade} with \Cref{lem:convergenceProbabilitiesNPG}. 
\end{proof}

\section{The Case of General Convex Regularizers} 
Although entropy regularization is arguably the most classic regularization strategy in reinforcement learning, more general regularizers have been considered in a variety of works~\cite{neu2017unified, geist2019theory, lan2022policy, zhan2021policy}. 
It is the purpose of this section to extend our explicit analysis of the entropy regularization to general convex regularizers. 
We show that for general convex regularizers considered in reinforcement learning, a corresponding linear programming formulation with convex regularization can be derived. 
In particular, this allows us to extend the implicit bias results to this more general class of policy mirror descent methods. 
Finally, we discuss for which regularizers one obtains an explicit expression of the policy gradient, allowing for an explicit convergence analysis. 
For ease of notation, we use the shorthand notation $\pi_s = \pi(\cdot|s)$ for a policy $\pi\in\Delta_\A^\bS$ as well as $\nu_s = \nu(s,\cdot)$ for a vector $\mathbb R^{\bS\times\A}$. 

Here, we consider a general convex regularizer $\psi\colon\Delta_{\A}\to\mathbb R$ and consider the regularized reward 
\begin{align}
    R_\tau(\pi) \coloneqq (1-\gamma) \E_{\PP^{\pi, \mu}}\left[ \sum_{t\in\N} \gamma^t (r(S_t, A_t) - \tau \psi(\pi_{S_t})) \right]. 
\end{align}
Just like for the entropy, the regularizer term can be expressed as $\Psi(\pi) = \sum_{s\in\bS} d^{\pi}(s) \psi(\pi(\cdot|s))$. 
Note that although we can, without loss of generality, assume $\Psi(\mu) = D_\Psi(\pi,\pi_0)$. 
Otherwise, we simply choose $\pi_0(\cdot|s)$ to be the maximizer of $\psi$. 

We obtain the following expression as a regularized linear program 
\begin{align}\label{eq:LP-general}
    \max r^\top \nu - \tau \Phi(\nu) \quad \text{subject to } \nu\in\cD, 
\end{align}
where the regularizer is given by 
\begin{align}\label{eq:SA-regularizer}
    \Phi(\nu) = \Psi(\nu(\cdot|\cdot)) = 
    \sum_{s\in\bS} \left(\sum_{a\in\A} \nu(s,a)\right)\psi\left(\frac{\nu(s,\cdot)}{\sum_{a\in\A} \nu(s,a)}\right). 
\end{align}
The direct generalization of the Kakade metric is given by 
\begin{align}\label{eq:general-policy-metric}
    g^\psi_\pi(u, w) \coloneqq \sum_{s\in\bS} d^\pi(s) u_s^\top \nabla^2\psi(\pi_s) w_s,
\end{align}
where $u_s = u(s, \cdot)$ and $w_s = w(s,\cdot)$ and $\pi_s = \pi(\cdot|s)$. 
We fix the setting we work in for this section. 

\begin{setting}\label{set:general}
Consider a finite discounted Markov decision process $(\bS, \A, P, \gamma, r)$, an initial distribution $\mu\in\Delta_\bS$ and fix a policy $\pi_0\in\operatorname{int}(\Delta_\A^\bS)$, assume that \Cref{asu:exploration} holds and denote the optimal reward by $R^\star\coloneqq\max\{R(\pi):\pi\in\Delta_\A^\bS\}$. 
Further, we fix a strictly convex regularizer $\psi\colon\Delta_\A\to\mathbb R$ that is twice continuously differentiable on $\operatorname{int}(\Delta_\A)$ with 
$v^\top\nabla^2\psi(\mu) v> 0$ for all $v\in T\Delta_\A$ and $\mu\in\operatorname{int}(\Delta_\A)$. 
\end{setting}

Under the assumptions above,~\eqref{eq:general-policy-metric} indeed defines a metric, which -- just like the Kakade metric -- is isometric to the Hessian metric induced by the regularizer $\Phi$ is a generalization of \Cref{thm:SAGeometryKakade}.

\begin{theorem}\label{thm:isometry-general}
    The regularizer $\Phi$ given by $\Phi(\nu) = \Psi(\nu(\cdot|\cdot))$ is convex. 
    Let further \Cref{set:general} hold. Then the mapping
    \begin{align}
        \begin{split}
            \Xi\colon(\operatorname{int}(\Delta_\A^\bS), g^\psi) & \to (\operatorname{int}(\cD), g^\Phi) \\ 
            \pi & \mapsto \nu^\pi \\ \nu(\cdot|\cdot) & \; \leftmapsto \; \nu 
        \end{split}
    \end{align}
    is an isometric diffeomorphism, where $g^\Phi$ denotes the Hessian metric induced by $\Phi$. 
\end{theorem}

The following general observation about normalization is useful in our analysis. 

\begin{lemma}\label{lem:perspective-geometry}
 Let $\psi\colon \Delta_\A \to \mathbb{R}$ be a twice-differentiable convex function defined on the probability simplex, and consider the function  
\begin{align}\label{eq:definition-scaled-perspective}
    \phi(\mu)= \left(\sum_{a\in \A} \mu(a)\right) \psi\!\left(\frac{\mu}{\sum_{a\in \A}^n \mu(a)}\right),
\end{align}
which is defined for for $\mu\in\mathbb{R}^\A_{>0}$. 
Then $\phi$ is convex.  
Further, we set $t=\sum_{a\in \A} \mu(a)$ and $p=\frac{\mu}{t}$ and consider the normalization map $F\colon\mathbb{R}^\A_{>0}\to\Delta_\A$ given by $F(\mu)=\frac{\mu}{t}$, then 
\begin{align}\label{eq:help-perspective-geometry}
    u^\top \nabla^2 \phi(\mu) w = t \cdot  (dF_\mu u)^\top \nabla^2 \psi(p) dF_\mu w. 
\end{align}
\end{lemma}
\begin{proof}
The \emph{perspective} of $\psi$ is given by $f(\mu, t) = t \psi(\frac{\mu}{t})$, which is convex, see~\cite[Subsection 3.2.6]{boyd2004convex}. 
As $\mu\mapsto(\mu, \sum_{a\in \A}\mu(a))$ is linear, $\phi$ is convex as the composition of a linear and convex function. 

Further, direct computation shows that the Hessian of the perspective $f$ is given by 
\begin{align*}
    \nabla^2 f(\mu, t) = \begin{pmatrix}
        \frac1t \nabla^2\psi(\frac\mu t) & \frac1{t^2} \nabla^2\psi(\frac\mu t)\mu \\ \frac1{t^2} \mu^\top \nabla^2\psi(\frac\mu t) & \frac1{t^3} \mu^\top \nabla^2\psi(\frac\mu t)\mu 
    \end{pmatrix} = \begin{pmatrix}
        \frac1t \nabla^2\psi(p) & \frac1{t^2} \mu^\top \nabla^2\psi(p) \\ \frac1{t^2} \nabla^2\psi(p)\mu & \frac1{t^3} \mu^\top \nabla^2\psi(p)\mu 
    \end{pmatrix}. 
\end{align*}
We denote the mapping $g(\mu) = (\mu, \sum_{a\in\A} \mu(a))=(\mu, \mathds{1}^\top \mu)$, where $\mathds{1}$ denotes the all one vector. 
By the chain rule for Hessian matrices, we have $\nabla^2 \phi(\mu) = Dg(\mu)^\top \nabla^2 f(\mu, t) Dg(\mu)$. 
Note that $g$ is linear and hence, we have $Dg(\mu)v = g(v)$ and we compute 
\begin{align}
    v^\top \nabla^2 \phi(\mu) w = g(v)\top \nabla^2 f(\mu, t) g(w) = \frac1t \left( v - \frac{\mathds{1}^\top v}t \mu \right)^\top \nabla^2 \psi(p) \left( v - \frac{\mathds{1}^\top v}t \mu \right). 
\end{align}
Finally, direct computation gives 
\[
    dF_\mu v = \frac{1}{t}v - \frac{\mathds{1}^\top v}{t^2}\mu,
\]
which concludes the proof
\end{proof}

\begin{proof}[Proof of \Cref{thm:isometry-general}]
For the convexity of $\Phi(\nu) = \sum_{s\in\bS} \phi(\nu_s)$ we can apply \Cref{lem:perspective-geometry} to $\phi$. 
    
Under \Cref{asu:exploration}, the mapping $\Xi$ is a smooth bijection with smooth inverse, hence a diffeomorphism. 
    It remains to show the isometry property
    \begin{align}
        g^\Phi_\nu(u, w) = g^\psi_\pi(d\Xi^{-1}_\nu u, d\Xi^{-1}_\nu w). 
    \end{align}
    To validate this, we recall that $\Xi^{-1}$ is given by conditioning, which corresponds to normalizing each block $\nu_s = \nu(s, \cdot)$. 
    Hence, we can use~\eqref{eq:help-perspective-geometry} to compute 
    \begin{align*}
        g^\psi_\pi(d\Xi^{-1}_\nu u, d\Xi^{-1}_\nu w) & = \sum_{s\in\bS} d^\pi(s) ((d\Xi^{-1}_\nu u)_s)^\top \nabla^2\psi(\pi_s) ((d\Xi^{-1}_\nu w)_s) 
        \\ & = \sum_{s\in\bS} \left(\sum_{a\in\A} \nu_s(a)\right)  (dF_{\nu_s} u_s)^\top \nabla^2\psi(\pi_s) (dF_{\nu_s} w_s )
        \\ & = \sum_{s\in\bS} u_s^\top \nabla^2\phi(\nu_s) w_s 
        \\ & = u^\top \nabla^2\Phi(\nu) w . 
    \end{align*}
\end{proof}

For general regularized MDPs, \Cref{thm:isometry-general} guarantees that we obtain a linear programming formulation~\eqref{eq:LP-general} with a convex regularizer $\Phi$. 
Consequently, we can apply the theory of Hessian gradient flows, which implies that the solutions $\pi^\star_t$ of the regularized problems (if unique) follow a gradient flow principle. 
We denote the gradient of $R$ with respect to the Riemannian metric $g^\psi$ by $\nabla^\psi R$ and say that $(\pi_t)_{t\in[0,T)}\subseteq \operatorname{int}(\Delta_\A^\bS)$ solves the \emph{$\psi$ gradient flow} if 
\begin{align}\label{eq:psi-GF}
    \partial_t \pi_t = \nabla^\psi R(\pi_t).
\end{align}
Further, the direct extension of the Kakade divergence is given by 
\begin{align}
    D_\Psi(\pi_1, \pi_2) = \sum_{s\in\bS} d^{\pi_1}(s) D_\psi(\pi_1(\cdot|s), \pi_2(\cdot|s)),
\end{align}
where $D_\psi$ denotes the Bregman divergence induced by $\psi$. 
Now we can formulate the analog to \Cref{cor:implications} regarding the central path property, sublinear convergence, and implicit bias. 

\begin{theorem}
Consider a finite Markov decision process, let \Cref{asu:exploration} hold, and consider a strictly convex regularizer $\psi\colon\Delta_\A\to\mathbb R$ with positive definite Hessian. 
Let \Cref{asu:exploration} hold, denote the set of optimal policies by  $\Pi^\star\coloneqq\{ \pi\in\Delta_\A^\bS : R(\pi) = R^\star \}$, and fix an initial policy $\pi_0\in\operatorname{int}(\Delta_\A^\bS)$. 
Then the following statements hold: 
\begin{enumerate}
    \item \emph{Well-posedness:} 
    The gradient flow~\eqref{eq:psi-GF} admits a unique local solution. 
    \item \emph{Central path property:} It holds that 
    \begin{align}
        \pi_t = \argmax_{\pi\in\Delta_\A^\bS} \Big\{R(\pi) - t^{-1} D_\Psi(\pi,\pi_0)\Big\}. 
    \end{align}
    \item \emph{Sublinear convergence:} We have 
    \begin{align}
            0\le R^\star - R(\pi_t)\le \frac{\min_{\pi^\star\in\Pi^\star}D_\Psi(\pi^\star, \pi_0) - D_\Psi(\pi_t, \pi_0)}{t} \le \frac{\min_{\pi^\star\in\Pi^\star}D_\Psi(\pi^\star, \pi_0)}{t}.
    \end{align}
    \item \emph{Implicit bias:} Under global well-posedness, the gradient flow $(\pi_t)_{t\ge0}$ converges globally and it holds that 
    \begin{align}
        \lim_{t\to+\infty} \pi_t = \pi^\star = \argmin_{\pi\in\Pi^\star} D_\Psi(\pi, \pi_0). 
    \end{align}    
\end{enumerate}
\end{theorem}
\begin{proof}
The proof is analogous to~\Cref{cor:implications}. 
\end{proof}

The global well-posedness of the gradient flows can be checked for specific choices of regularizers $\psi$ by verifying the Legendre-type property $\lVert \nabla\Phi(\nu) \rVert\to+\infty$ for $\nu\to\partial\cD$. 

In particular, this describes the regularization path as the solution of a gradient flow or, equivalently, characterizes the implicit bias of the gradient flow. 

\begin{theorem}
    Let \Cref{asu:exploration} hold. 
Then for all $\pi\in\operatorname{int}(\Delta_\A^\bS)$ and $a\in\A$, $s\in\bS$ it holds that 
\begin{equation}
\nabla^\psi R(\pi)(s,a) = (1-\gamma)^{-1} u(s,a),
\end{equation}
where $u$ is uniquely defined by $u_s = \nabla^2\psi(\pi_s)^{-1} A^\pi_s$. 
\end{theorem}
\begin{proof}
Just like in the case of~\Cref{thm:PGtheorem}, we obtain that 
\begin{align*}
    \partial_w R(\pi) = (1-\gamma)^{-1}\sum_{s,a} d^\pi(s) A^\pi(s,a) w(s,a) = (1-\gamma)^{-1}g^\psi_\pi(u,w), 
\end{align*}
where $u$ satisfies $u_s = \nabla^2\psi(\pi_s)^{-1} A^\pi_s$.
\end{proof}

In particular, for regularizers of the form 
\begin{align}
    \psi(\mu) = \sum_{a\in\A} h(\mu(a))
\end{align}
for a suitably convex and regular function $h$, the Riemannian metric is given by 
\begin{align}
        g^\psi_\pi(u, w) \coloneqq \sum_{s\in\bS} d^\pi(s) u(s,a) h''(\pi(a|s)) w(s,a),
\end{align}
and the policy gradient theorem takes the form 
\begin{align}
\nabla^\K R(\pi)(s,a) = (1-\gamma)^{-1} \cdot \frac{A^\pi(s,a)}{h''(\pi(a|s))}. 
\end{align} 

If further $h''(x) = x^{-\sigma}$ for $\sigma\in(1,+\infty)$, we can proceed analogously to the case of entropy regularization. 
Here, we obtain, together with the sublinear convergence, the differential inequality 
\begin{equation*}
    \left(A^{\star}(s,a) - \frac{\gamma D_\Psi(\pi^\star, \pi_0)}t\right)\pi_t(a|s)^\sigma \le (1-\gamma) \partial_t \pi_t(a|s) \le \left(A^{\star}(s,a) + \frac{ D_\Psi(\pi^\star, \pi_0)}{t}\right)\pi_t(a|s)^\sigma.
\end{equation*}
Note that $\partial_t x(t) = \alpha(t) x(t)^{\sigma}$ is solved by 
\begin{align*}
    x(t) = \left((1-\sigma) \int_0^t \alpha(r) \mathrm dr + x(0)^{1-\sigma}\right)^{-\frac1{\sigma-1}}.
\end{align*}
Hence, we obtain for all $t\ge1$ that 
\begin{align}\label{eq:upper-bound-sigma}
    \pi_t(a|s) \le \left( \frac{(1-\sigma) ((t-1) A^\star(s,a) + D_\Psi(\pi^\star, \pi_0) \log(t) + 2\lVert r \rVert_\infty)}{1-\gamma} + \pi_{0}(a|s)^{1-\sigma}\right)^{-\frac1{\sigma-1}}
\end{align}
as well as 
\begin{align}\label{eq:lower-bound-sigma}
    \pi_t(a|s) \ge \left( \frac{(1-\sigma) ((t-1) A^\star(s,a) + D_\Psi(\pi^\star, \pi_0) \log(t) - 2\lVert r \rVert_\infty)}{1-\gamma} - \pi_{0}(a|s)^{1-\sigma}\right)^{-\frac1{\sigma-1}}. 
\end{align}
Now we can formulate the optimal convergence rates for a class of convex regularizers. 

\begin{theorem}[Convergence rates] 
    Consider \Cref{set:general}, fix $\sigma\in (1,\infty)$ and assume that the convex regularizer is given by 
    \begin{align}
        \psi(\mu) = \begin{cases}
            \sum_{a\in\A} \log \mu(a) \quad & \text{if } \sigma=2 \\
            \frac1{(2-\sigma)(1-\sigma)}\sum_{a\in\A} \mu(a)^{2-\sigma} \quad & \text{otherwise}. 
        \end{cases}
    \end{align}
    Further, 
    \begin{align}
        \pi_t = \argmax_{\pi\in\Delta_\A^\bS} \Big\{R(\pi) - t^{-1} D_\Psi(\pi,\pi_0)\Big\}. 
    \end{align}
    We denote the generalized maximum entropy optimal policy by $\pi^\star = \argmin_{\pi\in\Pi^\star} D_\Psi(\pi, \pi_0)$, where $\Pi^\star\coloneqq\{ \pi\in\Delta_\A^\bS : R(\pi) = R^\star \}$ denotes the set of optimal policies.   
    Then, there exist constants $c_1, c_2, C>0$ depending on $\sigma$, $A^\star$, $\pi_0$, $\gamma$, $\lVert r \rVert_\infty$ and $\min_{s\in\bS} d^\star(s)$ such that 
    \begin{align}
        c_1 (t+\log(t)-C)^{-\frac{1}{\sigma-1}} \le R^\star - R(\pi_t) \le c_2 (t-\log(t)-C)^{-\frac{1}{\sigma-1}}
    \end{align}
    as well as 
    \begin{align}
        c_1 (t+\log(t)-C)^{-\frac{1}{\sigma-1}}  \le D_\K(\pi^\star, \pi_t) \le c_2 (t-\log(t)-C)^{-\frac{1}{\sigma-1}}. 
    \end{align}
\end{theorem}
\begin{proof}
First, we show the global well-posedness of the gradient flows in this case. 
Here, note that it suffices to show the Legendre-type property $\lVert \nabla\Phi(\nu_k) \rVert \to +\infty$ for $\nu_k\to\nu\to\partial\cD$, see~\cite{alvarez2004hessian, mueller2023thesis}. 
Fix $\nu_k\to\nu\partial\cD$ and denote the corresponding state distributions by $d_k\to d$. 
Then we have $\nu_k(s,a)\to0$ for some $s\in\bS,a\in\A$ and by~\Cref{asu:exploration} we have $d(s)>0$. 
Hence, we have $\pi_k(a|s)\to0$. 
Recall that we can express the regularizer as $\Phi(\nu) = \sum_{s\in\bS}\phi(\nu_k(s,\cdot))$ with $\phi$ defined in \eqref{eq:definition-scaled-perspective} and hence it remains to show that $\lVert \nabla \phi(\eta(s, \cdot)) \rVert\to+\infty$. 
Direct computation yields 
\begin{align*}
    \nabla \phi(\eta(s, \cdot)) = \psi(\pi_s) \mathds{1} + \nabla \psi(\pi_s) - (\nabla \psi(p)^\top p)\mathds{1}. 
\end{align*}
Examining the entry corresponding to $a$ we obtain
\begin{align*}
    \frac{\pi(a|s)^{2-\sigma}}{(2-\sigma)(1-\sigma)}+\frac{\pi(a|s)^{1-\sigma}}{1-\sigma}-\frac{\pi(a|s)^{2-\sigma}}{1-\sigma} \to +\infty
\end{align*}
for $\pi(a|s)\to0$, which establishes the Legendre-type property. 

Now, the \Cref{lem:subOptimalityGap,lem:boundingKakade} in combination with~\eqref{eq:upper-bound-sigma} and \eqref{eq:lower-bound-sigma} yield the result. 
\end{proof}

Note that 
\begin{align*}
    \lim_{t\to+\infty} \frac{c_1 (t+\log(t)-C)^{-\frac{1}{1-\sigma}}}{c_2 (t-\log(t)-C)^{-\frac{1}{1-\sigma}}} = \frac{c_1}{c_2}
\end{align*}
showing that the bound is asymptotically optimal up to absolute constants. 

\section{Improved Iteration Complexity for Regularized Natural Policy Gradients}\label{sec:overall}

Entropy regularization is commonly added to encourage exploration and accelerate the optimization process, however, the unregularized objective is still the objective criterion one wishes to optimize. 
Hence, it is a natural question to ask what accuracy one can achieve with a budget of $k$ iterations with a method that aims to optimize the regularized reward. 
One way to approach this is to use the error decomposition
\begin{align}\label{eq:errorDecomposition}
    0\le R^\star - R(\pi_k) = R^\star - R(\pi^\star_\tau) + R(\pi^\star_\tau) - R_\tau(\pi_\tau^\star) + R_\tau(\pi^\star_\tau) - R_\tau(\pi_k)  + R_\tau(\pi_k) - R(\pi_k),
\end{align} 
Applying the $R(\pi) - R_\tau(\pi) = O(\tau)$ bound on the entropy regularization error gives 
\begin{align}
    0\le R^\star - R(\pi_k) = O(\tau) + R_\tau(\pi^\star_\tau) - R_\tau(\pi_k) = O(\tau + e^{-\tau \eta k}) = O\left(\frac{\log k}{\eta k}\right)
\end{align}
for entropy-regularized natural policy gradients with stepsize $\eta = \frac{\log k}{k}>0$, see~\cite{cen2021fast}. 
See also~\cite{sethi2024entropy} for an $O(\frac{1}{t})$ guarantee of entropy-regularized natural policy gradient flows with $\tau = \frac1t$. 
In contrast, unregularized natural policy gradient achieves an exponential convergence rate of $\tilde{O}(e^{-\Delta \eta k})$, see \Cref{thm:convregenceNPG} and~\cite{khodadadian2022linear, liu2024elementary}. 
We use our estimate on the regularization error and obtain the following improved guarantee for entropy-regularized natural policy gradients. 

\begin{theorem}[Overall error analysis]\label{thm:overallError}
Consider a regularization strength $\tau\in(0,1]$ and consider the entropy-regularized reward $R_\tau(\pi) = R(\pi) - \tau D_\K(\pi, \pi_{\textup{unif}})$, $\pi_{\textup{unif}}$ denotes the uniform policy, meaning $\pi_{\textup{unif}}(a|s)= \lvert A\rvert^{-1}$ for all $a\in\A$, $s\in\bS$. 
We denote the optimal entropy regularized policy by $\pi^\star_\tau = \argmax_{\pi\in\Delta_\A^\bS} R_\tau(\pi)$ and the maximum entropy optimal policy by $\pi^\star$, meaning $\pi^\star(a|s) = \lvert A_s^\star \rvert^{-1}$ if $a\in A^\star_s$ is an optimal action. 
Assume that $(\pi_k)_{k\in\mathbb N}$ be the iterates produced by natural policy ascent with a log-linear tabular policy parametrization with stepsize $0<\eta\le\frac{1-\gamma}{\tau}$. 
Then it holds that 
\begin{align}\label{eq:overallErrorBound}
    R^\star - R(\pi_{k+1}) \le  
    \frac{2\lVert r \rVert_\infty e^\Delta}{1-\gamma} \cdot \tau^{-c} e^{-\Delta \tau^{-1}} 
    + 
    \frac{2\cdot \lvert \bS \rvert \cdot \lVert r \rVert_\infty C^{\frac12}}{1-\gamma} \cdot \tau^{-\frac12} e^{-\frac{\eta\tau (k-1)}{2}}, 
\end{align}
where $c=D_\K(\pi^\star, \pi_{\textup{unif}}) = \sum_{s\in\bS} d^{\pi^\star}(s)\log\frac{\lvert \A \rvert}{\lvert A^\star_s \rvert}\le\log\lvert \A\rvert$ and 
\begin{align} \label{eq:definitionConstantTheoremCen}
        C = \lVert Q_\tau^{\pi^\star_\tau} - Q_\tau^{\pi^{(0)}} \rVert_\infty + 2\tau \left( 1 - \frac{\eta\tau}{1-\gamma} \right) \lVert \log \pi^\star_\tau - \log \pi_0 \rVert_\infty. 
    \end{align}  
In particular, choosing $\tau = \sqrt{\frac{2\Delta}{\eta k}}$ yields 
\begin{align}\label{eq:overallRate}
    R^\star - R(\pi_k) = O\left(\left((\eta k)^{\frac{c}2} + (\eta k)^{\frac{1}4}\right)\cdot e^{-\sqrt{\frac{\Delta \eta k}{2}} } \right). 
\end{align}
\end{theorem}

In practice, one does not have access to the problem-dependent constant $\Delta>0$ without solving the reward optimization problem. 
However, setting $\tau = (\alpha \eta k)^{-\frac12}$ for some $\alpha>0$ yields an overall error estimate of 
\begin{align}
    R^\star - R(\pi_k) = O\left(\left(\frac{\eta k}{\alpha}\right)^{\frac{c}2} e^{-\Delta \sqrt{{\alpha\eta k}}} + \left(\frac{\eta k}{\alpha}\right)^{\frac{1}4} e^{-\sqrt{{\alpha\eta k}}} \right). 
\end{align}

In our proof, we use the following stability result on the reward function. 

\begin{proposition}[Lipschitz-continuity of the reward~\cite{mueller2023thesis}]\label{prop:Lipschitz}
    It holds that 
    \begin{align}
        \lvert R(\pi_1) - R(\pi_2) \rvert \le \frac{\lVert r \rVert_\infty}{1-\gamma} \cdot \lVert \pi_1 - \pi_2 \rVert_1 \quad \text{for all } \pi_1, \pi_2 \in\Delta_\A^\bS. 
    \end{align}
\end{proposition}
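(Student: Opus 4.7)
The plan is to derive the bound as a straightforward consequence of the performance difference lemma (\Cref{lem:PD}) combined with the boundedness of the state-action value function. Writing
\begin{align*}
R(\pi_1) - R(\pi_2) = \frac{1}{1-\gamma}\langle \nu^{\pi_1}, A^{\pi_2}\rangle_{\bS\times\A} = \frac{1}{1-\gamma}\sum_{s\in\bS} d^{\pi_1}(s) \sum_{a\in\A}\pi_1(a|s)\, A^{\pi_2}(s,a),
\end{align*}
I would exploit the identity $\sum_{a\in\A}\pi_2(a|s) A^{\pi_2}(s,a)=0$ (which is immediate from the definition $A^{\pi_2}(s,a)=Q^{\pi_2}(s,a)-V^{\pi_2}(s)$ and $V^{\pi_2}(s)=\sum_a \pi_2(a|s)Q^{\pi_2}(s,a)$) to rewrite the inner sum as $\sum_a(\pi_1(a|s)-\pi_2(a|s))A^{\pi_2}(s,a)$. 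Because $\sum_a(\pi_1(a|s)-\pi_2(a|s))=0$, the $V^{\pi_2}(s)$-term inside $A^{\pi_2}$ cancels and we may replace $A^{\pi_2}$ by $Q^{\pi_2}$ at no cost.

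Next, I would use the paper's normalization $Q^{\pi}(s,a)=(1-\gamma)r(s,a)+\gamma\sum_{s'}V^{\pi}(s')P(s'|s,a)$ together with the elementary estimate $\lvert V^{\pi}(s)\rvert\le \lVert r\rVert_\infty$ (which follows directly from the geometric series in the definition of $R$) to conclude $\lVert Q^{\pi_2}\rVert_\infty\le \lVert r\rVert_\infty$. H\"older's inequality in the action variable then yields
\begin{align*}
\Big\lvert \sum_{a\in\A}(\pi_1(a|s)-\pi_2(a|s))Q^{\pi_2}(s,a)\Big\rvert \;\le\; \lVert r\rVert_\infty\cdot \lVert \pi_1(\cdot|s)-\pi_2(\cdot|s)\rVert_1.
\end{align*}

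Putting everything together and using that $d^{\pi_1}$ is a probability distribution on $\bS$,
\begin{align*}
\lvert R(\pi_1)-R(\pi_2)\rvert \le \frac{\lVert r\rVert_\infty}{1-\gamma}\sum_{s\in\bS} d^{\pi_1}(s)\lVert \pi_1(\cdot|s)-\pi_2(\cdot|s)\rVert_1 \le \frac{\lVert r\rVert_\infty}{1-\gamma}\,\lVert \pi_1-\pi_2\rVert_1,
\end{align*}
where in the last step I interpret $\lVert \pi_1-\pi_2\rVert_1 = \max_{s\in\bS}\lVert \pi_1(\cdot|s)-\pi_2(\cdot|s)\rVert_1$ (or alternatively $\sum_{s,a}\lvert\pi_1(a|s)-\pi_2(a|s)\rvert$, in which case the bound is even easier since $d^{\pi_1}(s)\le 1$). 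There is no real obstacle here; the only mildly subtle step is the cancellation of the $V^{\pi_2}(s)$-term, which turns the naive bound involving $\lVert A^{\pi_2}\rVert_\infty\le 2\lVert r\rVert_\infty$ into the sharper bound involving $\lVert Q^{\pi_2}\rVert_\infty\le \lVert r\rVert_\infty$ and thereby removes a spurious factor of $2$.
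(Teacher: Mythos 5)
Your proof is correct. Note that the paper does not prove this proposition itself but defers to the cited thesis, so there is no in-paper argument to compare against; your derivation via the performance difference lemma, the cancellation $\sum_a \pi_2(a|s)A^{\pi_2}(s,a)=0$ (which lets you replace $A^{\pi_2}$ by $Q^{\pi_2}$), the bound $\lVert Q^{\pi_2}\rVert_\infty\le\lVert r\rVert_\infty$ valid under the paper's $(1-\gamma)$-normalized reward, and H\"older in the action variable is a clean, self-contained route to exactly the stated constant. Your final bound $\sum_s d^{\pi_1}(s)\lVert\pi_1(\cdot|s)-\pi_2(\cdot|s)\rVert_1\le\lVert\pi_1-\pi_2\rVert_1$ is consistent with the norm convention the paper actually uses later (in the proof of \Cref{prop:errorDecomposition}, $\lVert\pi_1-\pi_2\rVert_1=\sum_{s}\lVert\pi_1(\cdot|s)-\pi_2(\cdot|s)\rVert_1$), and your observation that the cancellation removes the spurious factor of $2$ from the naive $\lVert A^{\pi_2}\rVert_\infty$ bound is precisely the point of the argument.
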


Now we can upper bound the suboptimality gap $R^\star- R(\pi)$ of a policy in terms of $R^\star- R(\pi^\star_\tau)$ and the distance between $\pi$ and $\pi^\star_\tau$. 

\begin{lemma}\label{prop:errorDecomposition}
For any policy $\pi\in\Delta_\A^\bS$ we have 
\begin{align}
    0\le R^\star - R(\pi) \le R^\star - R(\pi^\star_\tau) + \frac{\sqrt{2} \cdot \lvert \bS \rvert \cdot \lVert r \rVert_\infty}{1-\gamma} \cdot \lVert \log \pi^\star_\tau - \log \pi \rVert_\infty^{\frac12} . 
\end{align}
\end{lemma}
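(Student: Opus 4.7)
The plan is to decompose the suboptimality as
\begin{align*}
    R^\star - R(\pi) = \bigl(R^\star - R(\pi^\star_\tau)\bigr) + \bigl(R(\pi^\star_\tau) - R(\pi)\bigr),
\end{align*}
where the first summand is exactly the regularization error already appearing on the right-hand side of the claim. All the work therefore goes into bounding the second summand by $\tfrac{\sqrt2\,\lvert\bS\rvert\,\lVert r\rVert_\infty}{1-\gamma}\lVert\log\pi^\star_\tau-\log\pi\rVert_\infty^{1/2}$.

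The natural first step is to invoke the Lipschitz continuity of the reward (Proposition~\ref{prop:Lipschitz}) to get
\begin{align*}
    R(\pi^\star_\tau) - R(\pi) \le \frac{\lVert r\rVert_\infty}{1-\gamma}\,\lVert \pi^\star_\tau - \pi\rVert_1.
\end{align*}
Since policies are elements of $\Delta_\A^\bS$, the $\ell^1$ norm decomposes state-wise as $\lVert\pi^\star_\tau-\pi\rVert_1=\sum_{s\in\bS}\lVert\pi^\star_\tau(\cdot|s)-\pi(\cdot|s)\rVert_1$, so it suffices to control each conditional distribution separately.

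For a fixed state $s$, I would first apply Pinsker's inequality to pass from total variation to KL, giving
\begin{align*}
    \lVert\pi^\star_\tau(\cdot|s)-\pi(\cdot|s)\rVert_1 \le \sqrt{2\,\KL\bigl(\pi^\star_\tau(\cdot|s),\pi(\cdot|s)\bigr)}.
\end{align*}
Then the KL-divergence can be controlled in $\ell^\infty$ by the pointwise estimate
\begin{align*}
    \KL\bigl(\pi^\star_\tau(\cdot|s),\pi(\cdot|s)\bigr)=\sum_{a}\pi^\star_\tau(a|s)\bigl(\log\pi^\star_\tau(a|s)-\log\pi(a|s)\bigr) \le \lVert\log\pi^\star_\tau-\log\pi\rVert_\infty,
\end{align*}
using that $\pi^\star_\tau(\cdot|s)$ is a probability distribution. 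Summing over $s$ yields $\lVert\pi^\star_\tau-\pi\rVert_1\le\lvert\bS\rvert\sqrt2\,\lVert\log\pi^\star_\tau-\log\pi\rVert_\infty^{1/2}$, and combining with the Lipschitz estimate finishes the proof. The lower bound $0\le R^\star-R(\pi)$ is of course just optimality of $R^\star$.

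There is no real obstacle here; the only point worth care is that the bound is vacuous (i.e.\ the right-hand side is $+\infty$) unless both policies share the same support, which is automatic since $\pi^\star_\tau\in\operatorname{int}(\Delta_\A^\bS)$ by \Cref{prop:SAGeometryLP} and we may harmlessly assume $\pi$ has full support as well, otherwise $\lVert\log\pi^\star_\tau-\log\pi\rVert_\infty=+\infty$ and the claim is trivial.
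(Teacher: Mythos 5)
Your proposal is correct and follows essentially the same route as the paper: the same decomposition of $R^\star - R(\pi)$, the Lipschitz bound of \Cref{prop:Lipschitz}, a state-wise application of Pinsker's inequality, and the pointwise bound $\KL(\pi^\star_\tau(\cdot|s),\pi(\cdot|s)) \le \lVert \log\pi^\star_\tau - \log\pi\rVert_\infty$, yielding the factor $\sqrt{2}\,\lvert\bS\rvert$ exactly as in the paper. The remark about full support is a harmless addition not needed in the paper's argument.
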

\begin{proof}
First, note that $R^\star-R(\pi) = R^\star-R(\pi^\star_\tau) + R(\pi^\star_\tau)-R(\pi)$. 
Using the Lipschitz continuity of the reward from \Cref{prop:Lipschitz} as well as Pinsker's and Jensen's inequality, we estimate
\begin{align*}
    R(\pi^\star_\tau)-R(\pi) & \le \frac{\rVert r \rVert_\infty}{1-\gamma} \cdot \lVert \pi^\star_\tau - \pi \rVert_1 
    \\ & = 
    \frac{\rVert r \rVert_\infty}{1-\gamma} \cdot \sum_{s\in\bS} \lVert \pi^\star_\tau(\cdot|s) - \pi(\cdot|s) \rVert_1
    \\ & \le 
    \frac{\sqrt{2} \cdot \rVert r \rVert_\infty}{1-\gamma} \cdot \sum_{s\in\bS} \KL(\pi^\star_\tau(\cdot|s), \pi(\cdot|s))^{\frac12}
    \\ & = 
    \frac{\sqrt{2} \cdot \rVert r \rVert_\infty}{1-\gamma} \cdot \sum_{s\in\bS} \left(\sum_{a\in\A} \pi^\star_\tau(a|s) \log \frac{\pi^\star_\tau(a|s)}{\pi(a|s)} \right)^{\frac12}
    \\ & \le 
    \frac{\sqrt{2} \cdot \rVert r \rVert_\infty}{1-\gamma} \cdot \sum_{s\in\bS} \left(\sum_{a\in\A} \pi^\star_\tau(a|s) \lVert \log \pi^\star_\tau - \log \pi \rVert_\infty \right)^{\frac12}
    \\ & =
    \frac{\sqrt{2} \cdot \lvert \bS \rvert \cdot \lVert r \rVert_\infty}{1-\gamma} \cdot \lVert \log \pi^\star_\tau - \log \pi \rVert_\infty^{\frac12}. 
\end{align*}
\end{proof}

\Cref{prop:errorDecomposition} can by used in combination with any result bounding $D_\K(\pi^\star_\tau, \pi_k)$ for a policy optimization technique. 
If only bounds on $R_\tau(\pi^\star_\tau) - R_\tau(\pi_k)$ are available, one can also use the local bound $D_\K(\pi^\star_\tau, \pi) \le \omega \tau^{-1} (R_\tau(\pi^\star_\tau) - R_\tau(\pi))$, which holds in a neighborhood of $\pi^\star_\tau$ that depends on $\omega\in(0,1)$, see~\cite[Lemma 29]{muller2024geometry}. 
Here, we limit our discussion to entropy-regularized natural policy gradient methods, which are known to converge linearly. 

\begin{theorem}[Convergence of entropy-regularized NPG, \cite{cen2021fast}]\label{thm:convergenceRegularizedNPG}
Consider natural policy gradient with a tabular softmax policy parametrization, a fixed regularization strength $\tau>0$, and stepsize $0<\eta\le\frac{1-\gamma}{\tau},$ and denote the iterates of the natural policy gradient updates by $(\pi_k)_{k\in\mathbb N}$.
Then for $k\in\mathbb N$, it holds that 
    \begin{align}
        \lVert Q^{\pi^\star_\tau}_\tau - Q_\tau^{\pi_{k+1}} \rVert_\infty & \le C (1-\eta\tau)^k \quad \text{and } \\
        \lVert \log \pi^\star_\tau - \log \pi_{k+1} \rVert_\infty & \le 2C \tau^{-1} (1-\eta\tau)^k, 
    \end{align} 
    where $C$ is defined in~\eqref{eq:definitionConstantTheoremCen}
\end{theorem}

Now we can provide the proof of the estimate on the performance of entropy-regularized natural policy gradients measured in the unregularized reward. 

\begin{proof}[Proof of \Cref{thm:overallError}]
Through a direct combination of \Cref{prop:errorDecomposition}, \Cref{thm:convergenceValue}, and \Cref{thm:convergenceRegularizedNPG}, we obtain~\eqref{eq:overallErrorBound}. 
\end{proof}

Recently, a sharp asymptotic analysis of entropy-regularized natural policy gradient methods has been conducted in~\cite{liu2024elementary} showing $R_\tau^\star - R(\pi_k)=O((1+\eta\tau)^{-2k})$ compared to the $O((1-\eta\tau)^{k})$ convergence of \Cref{thm:convergenceRegularizedNPG}. 
However, an analogous estimate on the convergence of the policies, as well as a control on the entry times after which the rate holds, is missing, which prevents us from using it in our analysis. 
By \Cref{thm:convregenceNPG} unregularized natural policy gradients achieve $O(k^{\gamma c}e^{-\Delta \eta k})$ convergence. 
Hence, although \Cref{thm:overallError} improves existing $O(\frac{\log k}{k})$ guarantees, the provided rate is still asymptotically slower compared to unregularized natural policy gradients. 
When we consider a fixed regularization and step size, $O(\gamma^k)$-convergence was established when exponentially decreasing the regularization and increasing the step size~\cite{li2023homotopic}, which can also be achieved by unregularized policy mirror descent, where the rate $O(\gamma^k)$ is known to be optimal~\cite{johnson2024optimal}.
For the small stepsize limit $\eta\to0$, the updates of the regularized natural policy gradient scheme follow the regularized Kakade gradient flow recently studied in~\cite{kerimkulov2023fisher}. 
These results complement the discrete-time analysis and ensure that $D_\K(\pi^\star_\tau, \pi_t) \le e^{-\tau t} D_\K(\pi^\star_\tau, \pi_0)$, see~\cite[Equation (61)]{kerimkulov2023fisher}. 
An analog treatment to the discrete-time case yields an overall estimate of $O(e^{-\sqrt{{2^{-1}\Delta t}}})$ for the regularized flow with strength $\tau = \sqrt{2\Delta t^{-1}}$, compared to the existing $O(t^{-1})$ guarantee in~\cite{sethi2024entropy}.

\section{Conclusions}

In this work, we provide an explicit characterization of the error introduced by entropy regularization in infinite-horizon discounted Markov decision processes with finite state and action spaces. 
We show that the solutions of the regularized optimization problems satisfy a gradient flow equation with respect to a Riemannian metric common in natural policy gradient methods. 
We use this to provide upper and lower bounds on the regularization error that decrease exponentially in the inverse regularization strength and match up to polynomial factors with an instance-dependent exponent. 
Thereby, we characterize the optimal exponential convergence rate for entropy regularization in discounted Markov decision processes.
Further, this correspondence allows us to identify the limit of this gradient flow as the generalized maximum entropy optimal policy, thereby characterizing the implicit bias of this gradient flow, which corresponds to a time-continuous version of the natural policy gradient method. 
We use our improved error estimates to show that for entropy-regularized natural policy gradient methods, the total error decays exponentially in the square root of the number of iterations.
    
\appendix

\section{Auxiliary Results}

\subsection{Performance difference lemma}
We recall an expression of the difference between the rewards of two policies in terms of the advantage function and the state-action distribution, and for a proof, we refer to~\cite[Lemma 2]{agarwal2021theory}.  

\begin{lemma}[Performance difference]\label{lem:PD}
    For any two policies $\pi_1, \pi_2\in\Delta_\bS^\A$ it holds that 
    \begin{align}
        R(\pi_1) - R(\pi_2) & = \frac{\langle \nu^{\pi_1}, A^{\pi_2} \rangle_{\bS\times\A}}{1-\gamma}. 
    \end{align}
\end{lemma}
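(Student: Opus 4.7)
The plan is to follow the standard two-step argument for the performance difference lemma: first establish a pointwise Bellman-type recursion for $V^{\pi_1}-V^{\pi_2}$, and then unroll it against the transition kernel induced by $\pi_1$ so that the state-action occupancy measure $\nu^{\pi_1}$ appears naturally.

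First I would fix $s\in\bS$ and rewrite $V^{\pi_1}(s)-V^{\pi_2}(s)$. Using $V^{\pi_1}(s)=\sum_a \pi_1(a|s)Q^{\pi_1}(s,a)$ and adding and subtracting $Q^{\pi_2}(s,a)$ inside the sum yields
\begin{align*}
V^{\pi_1}(s)-V^{\pi_2}(s)
&= \sum_a \pi_1(a|s)\bigl(Q^{\pi_2}(s,a)-V^{\pi_2}(s)\bigr) + \sum_a \pi_1(a|s)\bigl(Q^{\pi_1}(s,a)-Q^{\pi_2}(s,a)\bigr) \\
&= \sum_a \pi_1(a|s)\,A^{\pi_2}(s,a) + \gamma \sum_{a,s'} \pi_1(a|s)P(s'|s,a)\bigl(V^{\pi_1}(s')-V^{\pi_2}(s')\bigr),
\end{align*}
where the last step uses the Bellman identity $Q^{\pi_i}(s,a)=(1-\gamma)r(s,a)+\gamma\sum_{s'} P(s'|s,a)V^{\pi_i}(s')$ to cancel the instantaneous reward terms.

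Second, I would iterate this recursion. Denote by $T^{\pi_1}(s,s')\coloneqq \sum_a \pi_1(a|s)P(s'|s,a)$ the state transition kernel induced by $\pi_1$ and write $g(s)\coloneqq \sum_a\pi_1(a|s)A^{\pi_2}(s,a)$. Then the recursion reads $f=g+\gamma T^{\pi_1}f$ with $f(s)\coloneqq V^{\pi_1}(s)-V^{\pi_2}(s)$; since the spectral radius of $\gamma T^{\pi_1}$ is strictly less than $1$, this solves uniquely as $f=\sum_{t\in\N}\gamma^t (T^{\pi_1})^t g$. Taking expectation under $\mu$ and using $R(\pi_i)=\mathbb{E}_{s_0\sim\mu}[V^{\pi_i}(s_0)]$ gives
\begin{align*}
R(\pi_1)-R(\pi_2) = \sum_{t\in\N}\gamma^t \E_{\PP^{\pi_1,\mu}}[A^{\pi_2}(S_t,A_t)].
\end{align*}

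Third, I would recognize the right-hand side as an inner product against the occupancy measure. By definition, $\nu^{\pi_1}(s,a)=(1-\gamma)\sum_{t\in\N}\gamma^t \PP^{\pi_1,\mu}(S_t=s,A_t=a)$, so dividing by $(1-\gamma)$ converts the geometric sum of expectations into $\langle \nu^{\pi_1},A^{\pi_2}\rangle_{\bS\times\A}/(1-\gamma)$, which is exactly the claimed identity. There is no substantial obstacle here; the only point requiring a little care is the exchange of the infinite sum and expectation, which is justified by dominated convergence since $A^{\pi_2}$ is bounded and $\sum_t \gamma^t<\infty$.
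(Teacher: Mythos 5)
Your proof is correct: the decomposition $V^{\pi_1}-V^{\pi_2}=g+\gamma T^{\pi_1}(V^{\pi_1}-V^{\pi_2})$ followed by the Neumann-series unrolling against the $\pi_1$-induced kernel is exactly the standard performance-difference argument, and you handle the paper's $(1-\gamma)$-normalized definitions of $R$, $V^\pi$, $Q^\pi$, and $\nu^\pi$ consistently. The paper itself gives no proof but defers to the cited reference (Agarwal et al., Lemma 2), whose telescoping-over-trajectories argument is essentially the same as yours, so there is nothing to add beyond the routine dominated-convergence justification you already mention.
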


\subsection{Pythagorean theorem in Bregman divergences}\label{sec:app:pythagoras}

For the sake of completeness, we provide the proof of a generalized Pythagorean theorem. 

\begin{proposition}[Pythagoras for Bregman divergences]
Consider a convex differentiable function $\phi\colon\Omega\to\R$ defined on a convex set $\Omega\subseteq\R^d$. 
Further, consider a convex and closed subset $X\subseteq\Omega$, fix $y\in\Omega$ as well as a Bregman projection
\begin{align}
    \hat{y} \in \argmin_{x\in X} D_\phi(x,y). 
\end{align}
Then for any $x\in X$ we have 
    \begin{align}
        D_\phi(x,y) \ge D_\phi(x,\hat y) + D_\phi(\hat y,y)
    \end{align}
    If further $X = \Omega \cap \mathcal L$ for some affine space $\mathcal L\subseteq\R^d$ and if $\hat{y}\in\operatorname{int}(X)$ we have 
    \begin{align}
        D_\phi(x,y) = D_\phi(x,\hat y) + D_\phi(\hat y,y) 
    \end{align}
\end{proposition}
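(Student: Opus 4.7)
The plan is to reduce the statement to the standard three-point identity for Bregman divergences and then invoke the first-order optimality condition for the convex projection.

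First, by direct computation starting from the definition $D_\phi(a,b) = \phi(a) - \phi(b) - \nabla\phi(b)^\top(a-b)$, one verifies the three-point identity
\begin{align*}
D_\phi(x,y) - D_\phi(x,\hat y) - D_\phi(\hat y, y) = \bigl(\nabla\phi(\hat y) - \nabla\phi(y)\bigr)^\top (x - \hat y),
\end{align*}
valid for any $x,\hat y\in\Omega$ and $y\in\Omega$. This is a straightforward expansion where all occurrences of $\phi(x)$ cancel and the remaining linear terms collect into the displayed inner product; I would include it as a one-line calculation without further comment.

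Next, since $X$ is convex and $\hat y$ minimizes the convex differentiable map $f\colon x\mapsto D_\phi(x,y)$ over $X$, the standard first-order optimality condition for constrained convex optimization yields
\begin{align*}
\nabla f(\hat y)^\top (x - \hat y) = \bigl(\nabla\phi(\hat y) - \nabla\phi(y)\bigr)^\top (x - \hat y) \ge 0 \quad \text{for all } x\in X,
\end{align*}
using that $\nabla_a D_\phi(a,y) = \nabla\phi(a) - \nabla\phi(y)$. Combining this with the three-point identity immediately gives the inequality $D_\phi(x,y) \ge D_\phi(x,\hat y) + D_\phi(\hat y,y)$.

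For the equality statement, assume $X = \Omega\cap\mathcal L$ for an affine space $\mathcal L$ and $\hat y\in\operatorname{int}(X)$. Then for any $x\in X$, the segment $\hat y + t(x - \hat y)$ lies in $\mathcal L$ for all $t\in\mathbb R$, and by the interior assumption it even stays in $X$ for $t$ in a neighborhood of $0$ containing both positive and negative values. Thus both $x - \hat y$ and $-(x - \hat y)$ are admissible feasible directions at $\hat y$, so the first-order condition forces $(\nabla\phi(\hat y) - \nabla\phi(y))^\top(x - \hat y) = 0$, and the three-point identity upgrades to equality. No step here is a serious obstacle; the only thing to be slightly careful about is that the optimality condition requires convexity of $X$ (assumed) and the interior assumption in the affine case is used precisely to make negative multiples of $x - \hat y$ feasible.
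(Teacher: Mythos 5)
Your proof is correct and follows essentially the same route as the paper: the first-order optimality condition $(\nabla\phi(\hat y)-\nabla\phi(y))^\top(x-\hat y)\ge 0$ combined with an expansion of the Bregman divergences, with equality in the affine/interior case because both $\pm(x-\hat y)$ are feasible directions. The only cosmetic difference is that you package the expansion as the three-point identity, whereas the paper carries out the same cancellation inline as a chain of (in)equalities.
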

\begin{proof}
Set $g(x)\coloneqq D_\phi(x, y)$, then by the first order stationarity condition for constrained convex optimization, it holds that 
\begin{align}\label{app:eq:first-order-stationarity}
    0\ge \nabla g(\hat y)^\top(\hat y - x) = (\nabla \phi(\hat y) - \nabla \phi(y))^\top(\hat y - x) \quad \text{for all } x\in X, 
\end{align}
where we used the definition of the Bregman divergence. 
We use this to estimate 
\begin{align}\label{app:eq:estimate}
    \begin{split}
        D_\phi(x,\hat y) + D_\phi(\hat y, y) & = \phi(x) - \phi(\hat y ) - \nabla \phi(\hat y)^\top(x-\hat y) + \phi(\hat y) - \phi(y) - \nabla \phi(y)^\top (\hat y - y) 
    \\ & 
    \le \phi(x) - \nabla \phi(y)^\top(x-\hat y) - \phi(y) - \nabla \phi(y)^\top (\hat y - y) 
    \\ & 
    = \phi(x) - \phi(y) - \nabla \phi(y)^\top(x-y) 
    \\ & 
    = D_\phi(x, y). 
    \end{split}
\end{align}
If $X = \Omega \cap \mathcal L$ for an affine $\mathcal L\subseteq\R^d$ and $\hat{y}\in\operatorname{int}(X)$, then equality holds in~\eqref{app:eq:first-order-stationarity} and thus in~\eqref{app:eq:estimate}. 
\end{proof}

\section{Sublinear Convergence of Unregularized Natural Policy Gradient}\label{app:sec:convergence}

Here, we generalize the sublinear convergence analysis for unregularized natural policy gradients of~\cite[Section 5.3]{agarwal2021theory} to non-uniform initial policies $\pi_0$ and unnormalized reward $r\in\R^{\bS\times\A}$. 

\begin{restatable}[Progress lemma]{lemma}{progress}
\label{lem:ascentNPG}
    For any initial distribution $\mu\in\Delta_\bS$ and the natural policy gradient updates~\eqref{eq:NPG_update} it holds that 
    \begin{align}
        R(\pi_{k+1}) - R(\pi_k) \ge  \frac{1-\gamma}{\eta} \sum_{s\in\bS}\mu(s) \log Z_k(s) \ge 0. 
    \end{align}
    In particular, for any $s\in\bS$ we have 
    \begin{align}
        V^{\pi_{k+1}}(s) - V^{\pi_{k}}(s) \ge  \frac{1-\gamma}{\eta} \log Z_k(s) \ge 0. 
    \end{align}
\end{restatable}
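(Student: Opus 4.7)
The plan is to combine the performance difference lemma with the explicit NPG update to produce a KL-divergence term plus the $\log Z_k(s)$ remainder, then use non-negativity of KL and a lower bound on the discounted state distribution.

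First I would verify that $\log Z_k(s) \ge 0$. Since $\sum_{a} \pi_k(a|s) A^{\pi_k}(s,a) = \sum_a \pi_k(a|s) Q^{\pi_k}(s,a) - V^{\pi_k}(s) = 0$, Jensen's inequality applied to the convex function $\exp$ gives
\begin{equation*}
Z_k(s) = \sum_{a} \pi_k(a|s) \exp\!\left( \tfrac{\eta}{1-\gamma} A^{\pi_k}(s,a)\right) \ge \exp\!\left(\tfrac{\eta}{1-\gamma} \sum_a \pi_k(a|s) A^{\pi_k}(s,a) \right) = 1.
\end{equation*}

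Next I would invoke the performance difference lemma (\Cref{lem:PD}) to write
\begin{equation*}
R(\pi_{k+1}) - R(\pi_k) = \frac{1}{1-\gamma}\sum_{s\in\bS,a\in\A} d^{\pi_{k+1}}(s) \pi_{k+1}(a|s)\, A^{\pi_k}(s,a).
\end{equation*}
From the NPG update rule~\eqref{eq:NPG_update}, solving for the advantage gives
\begin{equation*}
\tfrac{\eta}{1-\gamma} A^{\pi_k}(s,a) = \log \pi_{k+1}(a|s) - \log \pi_k(a|s) + \log Z_k(s).
\end{equation*}
Substituting, multiplying by $\frac{1-\gamma}{\eta}$ on both sides, and using $\sum_a \pi_{k+1}(a|s) = 1$ collects a conditional KL and the $\log Z_k(s)$ term:
\begin{equation*}
R(\pi_{k+1}) - R(\pi_k) = \frac{1}{\eta}\sum_{s\in\bS} d^{\pi_{k+1}}(s) \Big( \KL(\pi_{k+1}(\cdot|s), \pi_k(\cdot|s)) + \log Z_k(s) \Big).
\end{equation*}

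Finally, the KL term is non-negative and, since $d^{\pi_{k+1}}(s) \ge (1-\gamma)\mu(s)$ directly from the definition of the discounted state distribution (the $t=0$ summand), combining with $\log Z_k(s)\ge 0$ yields
\begin{equation*}
R(\pi_{k+1}) - R(\pi_k) \ge \frac{1}{\eta}\sum_{s\in\bS} d^{\pi_{k+1}}(s) \log Z_k(s) \ge \frac{1-\gamma}{\eta}\sum_{s\in\bS}\mu(s) \log Z_k(s) \ge 0,
\end{equation*}
as desired. The second inequality follows by specializing to $\mu = \delta_s$, which gives $R^{\delta_s} = V^{\pi}(s)$. No step presents a genuine obstacle; the only subtlety is the monotonicity of $d^{\pi_{k+1}}$ with respect to the initial distribution, which is immediate from extracting the $t=0$ term of the geometric series defining $d^\pi$.
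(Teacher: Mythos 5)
Your proposal is correct and follows essentially the same route as the paper: Jensen's inequality for $\log Z_k(s)\ge 0$, the performance difference lemma with the NPG update solved for the advantage to produce a Kakade/KL term plus $\log Z_k$, and the bound $d^{\pi_{k+1}}(s)\ge(1-\gamma)\mu(s)$. The per-state statement via $\mu=\delta_s$ (using that the NPG iterates do not depend on $\mu$) also matches how the paper uses the lemma.
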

\begin{proof}
By Jensen's inequality, we have 
\begin{align*}
    \log Z_k(s) = \log \sum_{a\in\A} \pi_k(a|s) e^{\frac{\eta A^{\pi_{k}}(s,a)}{1-\gamma}} \ge \frac{\eta}{1-\gamma} \sum_{a\in\A} \pi_k(a|s) A^{\pi_{k}}(s,a) = 0
\end{align*}
for all $s\in\bS$ and $k\in\N$. 
Using the performance difference \Cref{lem:PD} together with $d_{\pi_{k+1}}(s)\ge (1-\gamma) \mu(s)$ we estimate 
\begin{align*}
    R(\pi_{k+1}) - R(\pi_{k}) & = \frac1{1-\gamma} \sum_{s\in\bS, a\in\A} d^{\pi_{k+1}}(s)\pi_{k+1}(a|s) A^{\pi_k}(s,a) 
    \\ & 
    = \eta^{-1}\sum_{s\in\bS, a\in\A} d^{\pi_{k+1}}(s)\pi_{k+1}(a|s) \log \frac{\pi_{k+1}(a|s) Z_k(s)}{\pi_k(a|s)} 
    \\ & 
    = \eta^{-1} D_K(\pi_{k+1}, \pi_k) + \eta^{-1}\sum_{s\in\bS} d^{\pi_{k+1}}(s)\log Z_k(s)
    \\ & 
    \ge \frac{1-\gamma}{\eta}\sum_{s\in\bS} \mu(s)\log Z_k(s). 
\end{align*}
\end{proof}

\sublinear*
\begin{proof}
First, note that the natural policy gradient iteration is independent of the initial distribution $\mu\in\Delta_\bS$. 
Using the performance difference \Cref{lem:PD} as well as the definition of the natural policy gradient step, we estimate the suboptimality for an arbitrary state $s_0\in\bS$ via 
\begin{align*}
   V^\star(s_0) - V^{\pi_k}(s_0) & = \frac{1}{1-\gamma} \sum_{s\in\bS, a\in\A} d^{\pi^\star}(s)\pi^\star(a|s) A^{\pi_k}(s,a) 
    \\ & 
    = \eta^{-1}\sum_{s\in\bS, a\in\A} d^{\pi^\star}(s)\pi^\star(a|s)\log \frac{\pi_{k+1}(a|s) Z_k(s)}{\pi_k(a|s)} 
    \\ & 
    = \frac{D_K(\pi^\star, \pi_k) - D_K(\pi^\star, \pi_{k+1})}{\eta} + \eta^{-1}\sum_{s\in\bS} d^{\pi^\star}(s)\log Z_k(s)
    \\ & 
    \le \frac{D_K(\pi^\star, \pi_k) - D_K(\pi^\star, \pi_k)}{\eta} + (1-\gamma)^{-1} \sum_{s\in\bS} d^{\pi^\star}(s)(V^{\pi_{k+1}}(s) - V^{\pi_{k}}(s)),
\end{align*}
where we used \Cref{lem:ascentNPG} with $\mu = d^{\pi^\star}$. 
By the progress \Cref{lem:ascentNPG} we know that $V^{\pi_k}(s)$ is non-decreasing in $k$ and hence we obtain 
\begin{align*}
    V^\star(s_0) - V^{\pi_k}(s_0) & \le \frac1{k} \sum_{l=0}^{k-1} (V^\star(s_0) - V^{\pi_l}(s_0)) 
    \\ & 
    \le \frac1{k} \left(\sum_{l=0}^{k} \frac{D_K(\pi^\star, \pi_{l+1}) - D_K(\pi^\star, \pi_l)}{\eta} + (1-\gamma)^{-1}\sum_{l=0}^{k} \sum_{s\in\bS} d^{\pi^\star}(s)(V^{\pi_{l+1}}(s) - V^{\pi_{l}}(s)) \right)
    \\ & 
    \le \frac1{k} \left(\frac{D_K(\pi^\star, \pi_{k})}{\eta} + (1-\gamma)^{-1} \sum_{s\in\bS} d^{\pi^\star}(s)(V^{\pi_{k}}(s) - V^{\pi_0}(s)) \right)
    \\ & 
    \le \frac1{k} \left(\frac{D_K(\pi^\star, \pi_{k})}{\eta} + 2 (1-\gamma)^{-1} \lVert r \rVert_\infty \right). 
\end{align*}
It holds that 
\begin{align*}
    A^\star(s,a) - A^{\pi}(s,a) 
    \le 
    Q^\star(s,a) - Q^{\pi}(s,a)
    \le 
    \gamma \lVert V^\star - V^\pi \rVert_\infty
\end{align*}
and similarly 
\begin{align*}
    A^\star(s,a) - A^{\pi}(s,a) & \ge V^\pi(s) - V^\star(s),
\end{align*}
which completes the proof. 
\end{proof}

\section*{Acknowledgments}
The authors acknowledge funding by the Deutsche Forschungsgemeinschaft (DFG, German Research Foundation) under the project number 442047500 through the Collaborative Research Center \emph{Sparsity and Singular Structures} (SFB 1481).
SC acknowledges funding by the Federal Ministry of Education and Research (BMBF) and the Ministry of Culture and Science of the German State of North Rhine-Westphalia (MKW) under the Excellence Strategy of the Federal Government and the Länder. 
JM acknowledges funding by the European Union (ERC, FluCo, grant agreement
No. 101088488). 
Views and opinions expressed are however those of the author(s) only and do not necessarily reflect those of the European Union or of the European Research Council. Neither the European Union nor the granting authority can be held responsible for them. 

\bibliographystyle{plain} 
\bibliography{references}

\end{document}